\providecommand{\keywords}[1]{\textbf{Keywords} #1}
\newtheorem{remark}{Remark}
\newtheorem{result}{Result}
\newtheorem{theorem}{Theorem}
\newtheorem{observation}{Observation}
\begin{document}

\title{Subgrid multiscale stabilized finite element analysis of fully-coupled unified Stokes-Darcy-Brinkman/Transport model}

\author{B.V. Rathish Kumar, Manisha Chowdhury\thanks{ Email addresses: drbvrk11@gmail.com (B.V.R. Kumar) and  chowdhurymanisha8@gmail.com (M. Chowdhury)  } }
      
\date{Indian Institute of Technology Kanpur \\ Kanpur, Uttar Pradesh, India}

\maketitle
\begin{abstract}
In this study, a stabilized finite element analysis of unified Stokes-Darcy-Brinkman system fully coupled with variable coefficient Advection-Diffusion-Reaction equation(VADR) has been carried out. The viscosity of the fluid, involved in Stokes-Darcy flow, depends on the concentration of the solute, whose transport is described by VADR equation. The algebraic subgrid multiscale approach has been employed to arrive at the stabilized coupled variational formulation. For the time discretization the fully implicit Euler scheme has been used. A detailed derivation of both the apriori and aposteriori estimates for the stabilized subgrid multiscale finite element scheme have been presented. Few numerical experiments have been carried out to verify the credibility of the method.
\end{abstract}

\keywords{Stokes-Darcy-Brinkman equation $\cdot$ Advection-diffusion-reaction equation $\cdot$  Subgrid scale method $\cdot$ A priori error estimation $\cdot$ A posteriori error estimation }
 

\section{Introduction}
Study of transport problem coupled with fluid flow equation has always been an active area of research due to its wide range of applications in effectively modelling various physical phenomena of physiological and environmental importance, such as modelling representing flow of drugs into the blood vessels, contamination of pollutant through rivers into ground water etc. One of the mathematical representative of fluid flow problems, the unified form of Stokes-Darcy-Brinkman equation, models fluid flow in porous media. The unified form of Stokes-Darcy-Brinkman model is used in several mathematical and engineering studies \cite{RefL}- \cite{RefP} for modelling fluid flow through porous media with high porosity. \vspace{1mm}\\
Many numerical methods like stabilized multiscale finite element method \cite{RefH}, MINI and Taylor-Hood finite element and the stabilized $P_1-P_1$ and $P_2-P_2$ methods \cite{RefI}, mixed finite element method \cite{RefJ}, uniformly stable finite element method \cite{RefK}, variational multiscale method, specifically an algebraic subgrid scale (ASGS) approach and the orthogonal subscale stabilization (OSS) method \cite{RefF} have been developed to study Stokes-Darcy-Brinkman model. All of these studies concentrate only upon the fluid flow problems. In \cite{RefA},\cite{RefC} authors have considered models with one way or weak coupling between Stokes-Darcy and Transport model. These studies first solve for velocity field and then solve for concentration field with velocity field as an input data. In this paper strong coupling of Brinkman flow problem with VADR equation has been taken into account in the sense that fluid viscosity depends upon the concentration. In \cite{RefB} authors prove the existence-uniqueness of the weak solution of the variational form of Stokes-Darcy-Brinkman/ADR model. Further under constrained viscosity consideration in \cite{RefG} authors derive apriori error estimates for a stabilized mixed finite element scheme for the Stokes-Darcy-ADR model and present mixed finite element results to a one-way coupling problem. \vspace{1mm}\\
In this current study we consider the strongly coupled unified Stokes-Darcy-Brinkman/VADR model and derive subgrid multiscale stabilized finite element method for it. Algebraic approximation of the subscales that arise from the decomposition of the exact solution field into resolvable scale and unresolvable scale, have been used for finite element scheme stabilization. Stabilization parameters are derived following the approach in \cite{RefE}, \cite{RefF} for ASGS method. Apriori error estimates for the current stabilized ASGS finite element method for the unified strongly coupled Stokes-Darcy-Brinkman/VADR have been derived. Further the aposteriori error estimates following the residual approach have also been carried out. Numerical studies have shown the realization of theoretical order of convergence and the robustness of current stabilized ASGS finite element method for Stokes-Darcy-Brinkman-VADR tightly coupled system.  \vspace{1mm} \\
Organisation of the paper is as follows: Section 2  starts from introducing the model and finishes at Subgrid formulation going through two more subsections presenting weak formulation and semi-discrete formulation. Next section has elaborately described the derivation of apriori and aposteriori error estimations for this subgrid formulation. At last section 4 contains numerical results to verify the numerical performance of the method.      
\section{Model problem}
Let $\Omega \subset R^d$, d=2,3 be an open bounded domain with piecewise smooth boundary $\partial\Omega$. For the sake of simplicity in further calculations, we have considered two dimensional model, but it can be easily extended for three dimensional model. Let us first mention the transient Stokes-Darcy(or Brinkman) flow problem for an incompressible fluid as follows: \\
Find $\textbf{u}$: $\Omega$ $\times$ (0,T) $\rightarrow R^2$ and p: $\Omega \times$ (0,T) $\rightarrow R$ such that,
\begin{equation}
\begin{split}
- \mu(c) \Delta \textbf{u} + \sigma \textbf{u} + \bigtriangledown p & = \textbf{f}_1 \hspace{2mm} in \hspace{2mm} \Omega \times (0,T) \\
\bigtriangledown \cdot \textbf{u} &= f_2 \hspace{2mm} in \hspace{2mm} \Omega \times (0,T) \\
\textbf{u} &= \textbf{0} \hspace{2mm} on \hspace{2mm} \partial\Omega \times (0,T) \\
\textbf{u} &= \textbf{u}_0 \hspace{2mm} at \hspace{2mm} t=0 \\
\end{split}
\end{equation} 
where \textbf{u}= ($u_1,u_2$) is the velocity of the fluid or solvent, p is the pressure, $\mu(c)$ is the viscosity of the fluid depending on concentration c of the dispersing mass of the solute, $\sigma$ is the inverse of permeability, $\textbf{f}_1$ is the body force, $f_2$ is source term and \textbf{$u_0$} is the initial velocity.\vspace{2 mm}\\
This Brinkman flow problem is fully-coupled with the following ADR equation with spatially variable coefficients, which represents the transportation of solute in the same domain $\Omega$ along with homogeneous Dirichlet boundary condition.\\
Find c: $\Omega \times$ (0,T) $\rightarrow R$ such that,
\begin{equation}
\begin{split}
\frac{\partial c}{\partial t}- \bigtriangledown \cdot \tilde{\bigtriangledown} c + \textbf{u} \cdot \bigtriangledown c + \alpha c & = g \hspace{2mm} on \hspace{2mm} \Omega \times (0,T) \\
c &= 0 \hspace{2mm} in \hspace{2mm}\partial \Omega \times (0,T) \\
c & = c_0 \hspace{2mm} at \hspace{2mm} t=0\\
\end{split}
\end{equation}
where the notation, $\tilde{\bigtriangledown}: = (D_1 \frac{\partial}{\partial x}, D_2 \frac{\partial}{\partial y})$ \\
$D_1, D_2$ are variable diffusion coefficients, $\alpha$ is the reaction coefficient and g denotes the source of solute mass. \vspace{1.0 mm}\\ 
Letting \textbf{U}= (\textbf{u},p,c) the equations all together can be written in the following operator form,
\begin{equation}
M\partial_t \textbf{U} + \mathcal{L} \textbf{U} = \textbf{F}
\end{equation}
where M, a matrix = diag(0,0,0,1), $\partial_t \textbf{U} = (\frac{\partial \textbf{u}}{\partial t}, \frac{\partial p}{\partial t}, \frac{\partial c}{\partial t})^T$ \\
\[
\mathcal{L} \textbf{U}=
  \begin{bmatrix}
    - \mu(c) \Delta \textbf{u} + \sigma \textbf{u} + \bigtriangledown p\\
    \bigtriangledown \cdot \textbf{u} \\
    - \bigtriangledown \cdot \tilde{\bigtriangledown} c + \textbf{u} \cdot \bigtriangledown c + \alpha c 
  \end{bmatrix}
\]
 and \[
\textbf{F}=
  \begin{bmatrix}
    \textbf{f}_1 \\
    f_2 \\
    g
  \end{bmatrix}
\]
Let us introduce the adjoint $\mathcal{L}^*$ of $\mathcal{L}$ as follows,
\[
\mathcal{L}^* \textbf{U}=
  \begin{bmatrix}
   - \mu(c) \Delta \textbf{u} + \sigma \textbf{u} - \bigtriangledown p\\
    -\bigtriangledown \cdot \textbf{u} \\
    - \bigtriangledown \cdot \tilde{\bigtriangledown} c - \textbf{u} \cdot \bigtriangledown c + \alpha c 
  \end{bmatrix}
\]
Now we impose suitable assumptions, that are necessary to conclude the results further, on the coefficients mentioned above.\\

\textbf{(i)} The fluid viscosity $\mu(c)= \mu \in C^0(R^+; R^+)$, the space of positive real valued functions defined on positive real numbers and we will have two positive real numbers $\mu_l$ and $\mu_u$ such that 
\begin{equation}
0 < \mu_l \leq \mu(x) \leq \mu_u \hspace{2mm} for \hspace{2mm} any \hspace{2mm} x\in R^+
\end{equation}

\textbf{(ii)} $D_1= D_1(x,y) \in C^0(R^2;R)$ and $D_2= D_2(x,y) \in C^0(R^2;R)$ where $ C^0(R^2;R)$ is the space of real valued continuous function defined on $R^2$. Both are bounded quantity that is we can find lower and upper boundes for both of them. \\

\textbf{(iii)} $\sigma$ and $\alpha$ are positive constants.\\

\textbf{(iv)} The body force, $\textbf{f}_1$ $\in l^{\infty}(0,T; (H^{-1}(\Omega))^2)$ and the source terms $f_2$, g $\in l^{\infty}(0,T; L^2(\Omega))$.

\subsection{Weak formulation} 
Let us first introduce the spaces as follows,\\
$H^1(\Omega)= \{v \in L^2(\Omega) : \bigtriangledown v \in L^2(\Omega) \} $ \vspace{1mm} \\
Let $V_s=H^1_0(\Omega) = \{ v \in H^1 (\Omega): v=0 \hspace{1 mm} on \hspace{1 mm} \partial \Omega \}$ and $Q_s=L^2(\Omega)$ and J= (0,T)\vspace{1mm}\\
Let $\textbf{V} $ := $l^2(0,T; V_s)\bigcap l^{\infty}(0,T; Q_s)$ and $\textbf{Q} $ := $l^2(0,T; Q_s)$ \vspace{1 mm}\\
Let us introduce another notation $\textbf{V}_F = V_s \times V_s\times Q_s \times V_s$ \vspace{2mm}\\
The weak formulation of (1)-(2) is to find \textbf{U}= (\textbf{u},p,c): J $ \rightarrow \textbf{V}_F$ such that $\forall$ \textbf{V}=(\textbf{v},q,d) $\in  \textbf{V}_F$
\begin{equation}
 (\frac{\partial c}{\partial t}, d) + a_S(\textbf{u},\textbf{v})- b(\textbf{v},p)+ b(\textbf{u},q)+ a_T(c,d)= l_S^1(\textbf{v})+ l_S^2(q)+l_T(d) 
\end{equation} 
where $a_S(\textbf{u},\textbf{v})= \int_{\Omega} \mu(c) \bigtriangledown \textbf{u}:\bigtriangledown \textbf{v} + \sigma \int_{\Omega} \textbf{u} \cdot \textbf{v}$ \vspace{1 mm}\\
 $b(\textbf{v},q)= \int_{\Omega} (\bigtriangledown \cdot \textbf{v}) q$ \vspace{1 mm} \\
 $a_T(c,d) = \int_{\Omega} \tilde{\bigtriangledown}c \cdot \bigtriangledown d + \int_{\Omega} d \textbf{u} \cdot \bigtriangledown c + \alpha\int_{\Omega}cd $ \vspace{1 mm} \\
 $l_S^1 (\textbf{v})= \int_{\Omega} \textbf{f} \cdot \textbf{v}$, $l_S^2(q)=\int_{\Omega} f_2 q$ and  $l_T(d)= \int_{\Omega} gd$ \vspace{1 mm} \\
Again the above formulation can be written as,\\
Find $\textbf{U} \in \textbf{V}_F$ such that
\begin{equation}
(M\partial_t \textbf{U},\textbf{V}) + B(\textbf{U}, \textbf{V}) = L(\textbf{V})   \hspace{2 mm} \forall \textbf{V} \in \textbf{V}_F
\end{equation}
where $B(\textbf{U}, \textbf{V}) = a_S(\textbf{u},\textbf{v})- b(\textbf{v},p)+ b(\textbf{u},q)+ a_T(c,d)$ \vspace{1mm}\\
 $L(\textbf{V})= l_S^1(\textbf{v})+l_S^2(q)+ l_T(d) $ \vspace{2 mm}\\
\begin{remark}
\cite{RefF} discusses about well posedness of unified Stokes-Darcy equation for positive viscosity coefficient.
\end{remark}

\begin{remark}
The existence of the weak solution of the variational form for coupled Stokes-Darcy/transport equation has been discussed in \cite{RefB}. Under the assumptions [(i)-(iv)] the existence of unique weak solution of the variational form (6) can be established easily following the approach presented in \cite{RefB}, as this model contains only linear terms.
\end{remark}

\subsection{Semi-discrete formulation}
In this section we will introduce the standard Galerkin finite element space discretization for the above variational form (5).\vspace{1 mm} \\
Let the domain $\Omega$ be discretized into finite numbers of subdomains $\Omega_k$ for k=1,2,...,$n_{el}$, where $n_{el}$ is the total number element subdomains. Let $h_k$ be the diameter of each subdomain $\Omega_k$ and h= $\underset{k=1,2,...n_{el}}{max} h_k$ \vspace{1 mm}\\
Let $\tilde{\Omega}= \bigcup_{k=1}^{n_el} \Omega_k$ be the union of interior elements.\vspace{1 mm}\\
Let $V_s^h$ and $Q_s^h$ be finite dimensional subspaces of $V_s$ and $Q_s$ respectively. They are taken as follows, \vspace{1 mm} \\
$V_s^h= \{ v \in V_s: v(\Omega_k)= \mathcal{P}^2(\Omega_k)\} $ \vspace{1 mm} \\
$Q_s^h= \{ q \in Q_s : q(\Omega_k)= \mathcal{P}^1(\Omega_k)\}$ \vspace{1 mm}\\
where $\mathcal{P}^1(\Omega_k)$ and $\mathcal{P}^2(\Omega_k)$ denote complete polynomial of order 1 and 2 respectively over each $\Omega_k$ for k=1,2,...,$n_{el}$. \\
Let us consider similar notation $\textbf{V}_F^h$ for corresponding finite dimensional subspace of $\textbf{V}_F$ where $\textbf{V}_F^h=V_s^h \times V_s^h \times Q_s^h \times V_s^h $\vspace{2 mm} \\
Now the Galerkin formulation of the variational form (6) will be as follows:\\
Find $\textbf{U}_h $= $(\textbf{u}_h,p_h,c_h)$: J $ \rightarrow \textbf{V}_F^h$ such that $\forall$ $\textbf{V}_h=(\textbf{v}_h,q_h,d_h)$ $\in \textbf{V}_F^h$
\begin{equation}
(M\partial_t \textbf{U}_h,\textbf{V}_h) + B(\textbf{U}_h, \textbf{V}_h) = L(\textbf{V}_h)   
\end{equation}
where $(M\partial_t \textbf{U}_h,\textbf{V}_h)$= $ (\frac{\partial c_h}{\partial t}, d_h)$ \vspace{1mm}\\
 $B(\textbf{U}_h, \textbf{V}_h) = a_S(\textbf{u}_h,\textbf{v}_h)- b(\textbf{v}_h,p_h)+ b(\textbf{u}_h,q_h)+ a_T(c_h,d_h)$ \vspace{1mm}\\
 $L(\textbf{V}_h)= l_S^1(\textbf{v}_h)+l_S^2(q_h) + l_T(d_h) $                               
 
  \subsection{Subgrid multiscale formulation}
This stabilization method has been introduced to correct the lack of stability that the Galerkin method suffers due to small diffusion coefficient. It involves decomposition of the solution space $\textbf{V}_F$ into the spaces of resolved scales and unresolved scales. The finite element space $\textbf{V}_F^h$ is considered as the space of resolved scales. Then the final form of subgrid formulation will be arrived while the elements of unresolved scales will be expressed in the terms of elements of resolved scales. \vspace{1 mm} \\
Following the procedure described in \cite{RefD} the variational subgrid scale model for this coupled equation will be written as follows,\vspace{1 mm} \\
Find $\textbf{U}_h $= $(\textbf{u}_h,p_h,c_h)$: J $ \rightarrow \textbf{V}_F^h$ such that $\forall$ $\textbf{V}_h=(\textbf{v}_h,q_h,d_h)$ $\in \textbf{V}_F^h$ 
\begin{equation}
(M\partial_t \textbf{U}_h,\textbf{V}_h) + B_{ASGS}(\textbf{U}_h, \textbf{V}_h)  = L_{ASGS}(\textbf{V}_h)  
\end{equation}
where $B_{ASGS}(\textbf{U}_h, \textbf{V}_h)= B(\textbf{U}_h, \textbf{V}_h)+ \sum_{k=1}^{n_{el}} (\tau_k'(M\partial_t \textbf{U}_h + \mathcal{L}\textbf{U}_h-\textbf{d}), -\mathcal{L}^*\textbf{V}_h)_{\Omega_k}- \sum_{k=1}^{n_{el}}((I-\tau_k^{-1}\tau_k')(M\partial_t \textbf{U}_h + \mathcal{L}\textbf{U}_h), \textbf{V}_h)_{\Omega_k}-\sum_{k=1}^{n_{el}} (\tau_k^{-1}\tau_k' \textbf{d}, \textbf{V}_h)_{\Omega_k}$ \vspace{2 mm}\\
$L_{ASGS}(\textbf{V}_h)= L(\textbf{V}_h)+ \sum_{k=1}^{n_{el}}(\tau_k' \textbf{F}, -\mathcal{L}^*\textbf{V}_h)_{\Omega_k}- \sum_{k=1}^{n_{el}}((I-\tau_k^{-1}\tau_k')\textbf{F}, \textbf{V}_h)_{\Omega_k}$  \vspace{1 mm} \\
where the stabilization parameter $\tau_k$ is in matrix form as 
\[
\tau_k= diag(\tau_{1k},\tau_{1k},\tau_{2k},\tau_{3k}) =
  \begin{bmatrix}
    \tau_{1k} I & 0 & 0 \\
    0 & \tau_{2k} & 0 \\
    0 & 0 & \tau_{3k}
  \end{bmatrix}
\]
and 
\[
\tau_k'= (\frac{1}{dt}M+ \tau_k^{-1})^{-1} =
  \begin{bmatrix}
    \tau_{1k}I & 0 & 0 \\
    0 & \tau_{2k} & 0 \\
    0 & 0 & \frac{\tau_{3k} dt}{dt+ \tau_{3k}}
  \end{bmatrix}\\
  = diag (\tau_{1k}',\tau_{1k}',\tau_{2k}',\tau_{3k}') \hspace{2mm}(say)
\]
I is a square identity matrix of order 2.\vspace{1 mm}\\
$\textbf{d}$= $\sum_{i=1}^{n+1}(\frac{1}{dt}M\tau_k')^i(\textbf{F} -M\partial_t \textbf{U}_h - \mathcal{L}\textbf{U}_h)$\vspace{1 mm}\\
considering $d_i$ for i=1,2,3,4 are components of the matrix $\textbf{d}$ and it can be easily observed that $d_1,d_2,d_3$ are always 0 because of matrix M. \vspace{2 mm}\\
We have the forms of the stabilization parameters $\tau_{1k}, \tau_{2k}$ for unified Stokes-Darcy problem in \cite{RefF} and $\tau_{3k}$ for ADR equation with spatially variable coefficients in \cite{RefE} and for each k all the coefficients $\tau_{ik}$ coincide with $\tau_{i}$ for i=1,2,3 that is, for each k=1,2,...,$n_{el}$
\begin{equation}
\begin{split}
\tau_{1k} &= \tau_{1}= (c_1^{\textbf{u}} \frac{\mu_l}{h^2}+  c_2^{\textbf{u}} \sigma)^{-1} \\
\tau_{2k} &=\tau_{2}=(c_1^p \mu_l+ c_2^p \sigma h^2) \\
\tau_{3k} & = \tau_{3}= (\frac{9D}{4h^2} + \frac{3U}{2h} + \alpha )^{-1}
\end{split}
\end{equation}

\section{Error estimates}
We start this section with the introduction of the notion of error terms, followed by splitting of those error terms through introducing the projection operator corresponding to each unknown variable. Later we have introduced fully-discrete formulation and then conducted  apriori and aposteriori error estimates.

\subsection{Projection operators : Error splitting}
Let $\textbf{e}=(e_{\textbf{u}},e_p,e_c)$ denote the error where the components are $e_{\textbf{u}}=(e_{u1},e_{u2})= (u_1-u_{1h}, u_2-u_{2h}), e_p= (p-p_h)$ and $e_c=(c-c_h)$.
Here $\textbf{u}=(u_1,u_2)$ and all the remaining notations carry their respective meanings. \vspace{2mm}\\
Let us introduce the projection operator for each of this error components.\vspace{1 mm}\\
(i)For any $\textbf{u} \in V_s \times V_s $ we assume that there exists an interpolation $I^h_{\textbf{u}}:  V_s \times V_s \longrightarrow  V_s^h \times V_s^h $ satisfying \vspace{1mm}\\
(a) $b(\textbf{u}-I^h_{\textbf{u}}\textbf{u}, q_h)=0$ \hspace{2mm} $\forall q_h \in Q_s^h$ and \vspace{1mm}\\
each component of the projection map that is $I^h_{u_1}: V_s \longrightarrow V_s^h$ and $I^h_{u_2}: V_s \longrightarrow V_s^h$ are $L^2$ orthogonal projections, satisfying \vspace{1mm}\\
(b) for any $u_1 \in V_s$ \hspace{1mm} $(u_1-I^h_{u_1}u_1,v_{1h})=0$ \hspace{1mm} $\forall v_{1h} \in V_s^h$ and \vspace{1mm} \\
(c)for any $u_2 \in V_s$ \hspace{1mm} $(u_2-I^h_{u_2} u_2,v_{2h})=0$ \hspace{1mm} $\forall v_{2h} \in V_s^h$ \vspace{2mm}\\
(ii) Let $I^h_p: Q_s \longrightarrow Q_s^h$ be the $L^2$ orthogonal projection given by \\ $\int_{\Omega}(p-I^h_pp)q_h=0$ \hspace{1mm} $\forall p \in Q_s, \hspace{1mm} \forall q_h \in Q_s^h$ \vspace{2mm}\\
(iii)Let $I^h_{c}: V_s \longrightarrow V_s^h$ be the $L^2$ orthogonal projection given by \\ $\int_{\Omega}(c-I^h_c c)d_h=0$ \hspace{1mm} $\forall c \in V_s, \hspace{1mm} \forall d_h \in V_s^h$ \vspace{2mm}\\
Now each components of the error can be split into two parts interpolation part, $E^I$ and auxiliary part, $E^A$ as follows: \vspace{1mm}\\
$e_{u1}=(u_1-u_{1h})=(u_1-I^h_{u1}u_1)+(I^h_{u1}u_1-u_{1h})= E^{I}_{u1}+ E^{A}_{u1}$ \vspace{1mm}\\
Similarly, $e_{u2}=E^{I}_{u2}+ E^{A}_{u2}$,
$e_{p}=E^{I}_{p}+ E^{A}_{p}$, and 
$e_{c}=E^{I}_{c}+ E^{A}_{c}$ \vspace{2mm}\\
Now we put some results using the properties of projection operators and these results will be used in error estimations.
\begin{result}
\begin{equation}
(\frac{\partial}{\partial t} E^I_{c}, d_h)=0 \hspace{2mm} d_h \in V_s^h
\end{equation}
\end{result}
\textit{Proof:} We have $(c-I^h_{c}c,d_h)=0= (E^I_{c},d_h)$ \hspace{1mm} $\forall d_h \in V_s^h$ \vspace{1mm}\\
Therefore
\begin{equation}
\begin{split}
\frac{d}{dt}(E^I_{c},d_h) & =0 \hspace{1mm} \forall d_h \in V_s^h \\
(\frac{\partial}{\partial t} E^I_{c},d_h) + (E^I_{c}, \frac{\partial}{\partial t} d_h) & =0 \hspace{1mm} \forall d_h \in V_s^h \\
(\frac{\partial}{\partial t} E^I_{c},d_h) &=0 \hspace{1mm} \forall d_h \in V_s^h \\
\end{split}
\end{equation}
Since $\frac{\partial}{\partial t} d_h \in V_s^h$, the second term in second equation $(E^I_{c}, \frac{\partial}{\partial t} d_h)=0$ \vspace{2mm}\\
Useful interpolation estimation results [ref] are as follows: for any exact solution with regularity upto (m+1)
\begin{equation}
\|v-I^h_v v\|_l = \|E^I_v\|_l \leq C(p,\Omega) h^{m+1-l} \|v\|_{m+1} 
\end{equation}
where l ($\leq m+1$) is a positive integer and C is a constant depending on m and the domain. For l=0 and 1 it  implies standard $L^2(\Omega)$ and $H^1(\Omega)$ norms respectively. For simplicity we will use $\| \cdot \|$ instead of $\| \cdot \|_0$ to denote $L^2(\Omega)$ norm.

\subsection{Fully-discrete form}
Before introducing time discretization, some notations  have been introduced: for $dt$= $\frac{T}{N}$, where $N$ is a positive integer, $t_n= n dt$ and for given $0 \leq \theta \leq 1$,
\begin{equation}
\begin{split}
f^n & = f(\cdot , t_n) \hspace{4 mm} for \hspace{2 mm} 0 \leq n \leq N\\
f^{n,\theta} &= \frac{1}{2} (1 + \theta) f^{(n+1)} + \frac{1}{2} (1- \theta) f^n \hspace{4mm} for \hspace{2mm} 0\leq n \leq N-1
\end{split}
\end{equation}
Later we will see for $\theta=0$ the discretization follows Crank-Nicolson formula and for $\theta=1$ it is backward Euler discretization rule.\vspace{1mm}\\
For sufficiently smooth function f(t), using the Taylor series expansion about t= $t^{n,\theta}$, we will have \vspace{1mm}\\
\begin{equation}
\begin{split}
f^{n+1} & = f(t^{n,\theta})+ \frac{(1-\theta)  dt}{2} \frac{\partial f}{\partial t}(t^{n,\theta}) + \frac{(1-\theta)^2 dt^2}{8} \frac{\partial^2 f}{\partial t^2} (t^{n,\theta}) + \mathcal{O}(dt^3)\\
f^{n} & = f(t^{n,\theta})- \frac{(1+\theta) dt}{2} \frac{\partial f}{\partial t}(t^{n,\theta}) + \frac{(1+\theta)^2 dt^2}{8} \frac{\partial^2 f}{\partial t^2}(t^{n,\theta}) + \mathcal{O}(dt^3)
\end{split}
\end{equation}
We have considered here $t^{n,\theta}- t^n= \frac{(1+\theta) \Delta t}{2}$\\
Multiplying the first and second equations by $\frac{1+\theta}{2}$ and $\frac{1-\theta}{2}$ respectively and then adding them we will have the following according to (16)\\
\begin{equation}
f^{n,\theta} = f(t^{n,\theta}) + \frac{1}{8} (1+\theta)(1-\theta)\Delta t^2 \frac{\partial^ f}{\partial t^2}(t^{n,\theta}) + \mathcal{O}(dt^3)
\end{equation} 
For $\theta=1$ we will have third order accuracy in time and for $\theta=0$ the scheme will be second order accurate in time.\vspace{1mm}\\
Let $\textbf{u}^{n,\theta},p^{n,\theta},c^{n,\theta}$ be approximations of $\textbf{u}(\textbf{x},t^{n,\theta}), p(\textbf{x},t^{n,\theta}),c(\textbf{x},t^{n,\theta})$ respectively. Now by Taylor series expansion \cite{RefQ},we have 
\begin{equation}
\begin{split}
\frac{c^{n+1}-c^n}{dt} & = c_t(\textbf{x},t^{n,\theta}) + TE \hspace{5mm} \forall \textbf{x} \in \Omega
\end{split}
\end{equation}
where the truncation error $TE$ depends upon time-derivatives of the respective variables and $dt$.
\begin{equation}
\begin{split}
\|TE^2\| & \leq
      \begin{cases}
      C' dt \|c_{tt}\|_{L^{\infty}(t^n,t^{n+1},H^1)} & if \hspace{1mm} \theta=1 \\
    C" dt^2 \|c_{ttt}\|_{L^{\infty}(t^n,t^{n+1},H^1)} & if \hspace{1mm} \theta=0
      \end{cases}
\end{split}
\end{equation}
The truncation error is of $O(dt \theta +dt^2(1-\theta)^3+dt^2(1+\theta)^3)$ \cite{RefQ}\\
After introducing all the required definitions finally the fully-discrete formulation of sub-grid form is as follows: \\
For given $\textbf{U}_h^n = (\textbf{u}_h^n,p_h^n,c_h^n)\in \textbf{V}_F^h$ find $\textbf{U}_h^{n+1}= (\textbf{u}_h^{n+1},p_h^{n+1},c_h^{n+1}) \in \textbf{V}_F^h $  such that , $\forall \hspace{1mm} \textbf{V}_h=(\textbf{v}_h,q_h,d_h) \in \textbf{V}_F^h $
\begin{equation}
(M\frac{(\textbf{U}_h^{n+1}-\textbf{U}_h^n)}{dt}, \textbf{V}_h)+ B_{ASGS}(\textbf{U}_h^{n,\theta}, \textbf{V}_h) = L_{ASGS}(\textbf{V}_h) + (TE,d_h)  
\end{equation}
Again for the exact solution we will have the discrete formulation as follows: \\
For given $\textbf{U}^n = (\textbf{u}^n,p^n,c^n)\in \textbf{V}_F$ find $\textbf{U}^{n+1}= (\textbf{u}^{n+1},p^{n+1},c^{n+1}) \in \textbf{V}_F $  such that , $\forall \hspace{1mm} \textbf{V}_h=(\textbf{v}_h,q_h,d_h) \in \textbf{V}_F^h$
\begin{equation}
(M\frac{(\textbf{U}^{n+1}-\textbf{U}^n)}{dt}, \textbf{V}_h)+ B(\textbf{U}^{n,\theta}, \textbf{V}_h) = L(\textbf{V}_h) + (TE,d_h)  
\end{equation}

\subsection{Apriori error estimation}
In this section we will find apriori error bound, which depends on the exact solution. Here we first estimate auxiliary error bound and later using that we will find apriori error estimate. Before deriving error estimations let us mention few definitions of norm which we are going to use in this estimation:
\begin{equation}
\begin{split}
\|f\|_{l^2(H^1)}^2 &= \sum_{n=0}^{N-1} (\int_{\Omega} \mid f^{n,\theta} \mid^2  +  \int_{\Omega} \mid \frac{\partial f}{\partial x}^{n,\theta} \mid^2  +  \int_{\Omega} \mid \frac{\partial f}{\partial y}^{n,\theta} \mid^2 )dt \\
\|f\|_V^2 & = \underset{0\leq n \leq N}{max} \|f^n\|^2 + \|f\|_{l^2(H^1)}^2\\
\|f\|_Q^2 &= \|f\|_{l^2(L^2)}^2 = \sum_{n=0}^{N-1} \|f^{n,\theta}\|^2 dt 
\end{split}
\end{equation}
\begin{theorem} (Auxiliary error estimate)
For velocity $\textbf{u}_h=(u_{1h},u_{2h})$, pressure $p_h$ and concentration $c_h$ belonging to $V_s^h \times V_s^h \times Q_s^h \times V_s^h$ satisfying (10), assume dt is sufficiently small and positive, and sufficient regularity of exact solution in equations (1)-(2). Then there exists a constant C, depending upon $\textbf{u}$,p,c , such that
\begin{equation}
\|E^A_{u1}\|^2_{l^2(H^1)} + \|E^A_{u2}\|^2_{l^2(H^1)}+ \|E^A_p\|_{l^2(L^2)}^2  + \|E^A_{c}\|^2_V \leq C (h^2+h+ dt^{2r})
\end{equation}
where
\begin{equation}
    r=
    \begin{cases}
      1, & \text{if}\ \theta=1 \\
      2, & \text{if}\ \theta=0
    \end{cases}
  \end{equation}
\end{theorem}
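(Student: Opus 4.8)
The plan is to start from an error equation obtained by subtracting the fully-discrete scheme satisfied by the exact solution from the scheme satisfied by the discrete solution, both tested against the same $\textbf{V}_h\in\textbf{V}_F^h$. Writing every error component in the split form $\textbf{e}=E^I+E^A$ and noting that the auxiliary part $\textbf{E}^A=(E^A_{\textbf{u}},E^A_p,E^A_c)$ already lies in $\textbf{V}_F^h$, I would transfer the interpolation contributions $E^I$ to the right-hand side and then take the test function $\textbf{V}_h=\textbf{E}^{A}$ at each time level $n,\theta$. With this choice the pressure--velocity coupling cancels, since $-b(E^A_{\textbf{u}},E^A_p)+b(E^A_{\textbf{u}},E^A_p)=0$, while property (a) of the velocity interpolant kills the divergence of the interpolation velocity error against discrete pressures. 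Result 1 then removes the interpolation part of the discrete time-derivative term because $(\partial_t E^I_c,d_h)=0$.

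The second step is to extract coercivity. The symmetric parts of $a_S$ and $a_T$ provide $\mu_l\|\nabla E^A_{\textbf{u}}\|^2+\sigma\|E^A_{\textbf{u}}\|^2$ together with the diffusion--reaction control $\int\tilde{\nabla}E^A_c\cdot\nabla E^A_c+\alpha\|E^A_c\|^2$, using assumptions (i)--(iii) and the lower bounds on $D_1,D_2$; the advection term $\int d\,\textbf{u}\cdot\nabla c$ is disposed of through its skew-symmetric structure after integration by parts. The ASGS stabilization terms of $B_{ASGS}$ are essential here: they restore the missing pressure control through $\tau_2$, so that $\|E^A_p\|^2$ appears on the coercive side, while $\tau_1,\tau_3$ weight the residual subscales. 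For the discrete time derivative I would telescope the sum over $n=0,\dots,N-1$ to produce $\tfrac12\max_n\|E^{A,n}_c\|^2$, which supplies the $\max$ contribution to the $\|\cdot\|_V$ norm on the left.

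The heart of the argument, and the main obstacle, is the strong coupling through the concentration-dependent viscosity. Subtracting the two schemes leaves a term of the form $\int\bigl(\mu(c)-\mu(c_h)\bigr)\nabla\textbf{u}:\nabla E^A_{\textbf{u}}$ that must be reorganised as $\mu(c_h)$ acting on $\nabla(\textbf{u}-\textbf{u}_h)$ plus a genuinely nonlinear remainder; invoking the boundedness and Lipschitz behaviour of $\mu$ from assumption (i) together with the assumed regularity of $\textbf{u}$, this remainder is controlled by $\|e_c\|$ and absorbed, via Young's inequality, into the coercive velocity and concentration terms. The cross term $\int d\,(\textbf{u}-\textbf{u}_h)\cdot\nabla c$ coming from the advection is handled identically. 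These couplings are precisely what prevent a direct estimate and force the use of a discrete Gronwall inequality, which is why $dt$ is assumed sufficiently small.

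Finally I would bound all the right-hand side contributions. The interpolation terms are estimated by the interpolation inequality with $m=2$ for the $P^2$ velocity and concentration and $m=1$ for the $P^1$ pressure; the subscale residual terms in $B_{ASGS}$ and $L_{ASGS}$ involve $\mathcal{L}^*$, hence second derivatives, weighted by the parameters $\tau_k$, and carefully tracking these mixed powers of $h$ through the stabilization and coupling terms is what produces both the $h^2$ and the weaker $h$ contributions in the stated bound. The truncation error is controlled by its bound, yielding the $dt^{2r}$ term with $r$ as defined in the statement. After collecting all coercive quantities on the left, absorbing the gradient, pressure and concentration cross terms by Young's inequality with suitably small parameters, summing over the time steps and applying the discrete Gronwall lemma, the estimate closes and gives $C(h^2+h+dt^{2r})$.
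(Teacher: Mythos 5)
Your outline reproduces the paper's first part faithfully (subtract the two fully-discrete forms, split $e=E^I+E^A$, use Result 1 and property (a) of the velocity interpolant, test with $\textbf{V}_h=\textbf{E}^A$, extract coercivity from $a_S$ and $a_T$, bound interpolation terms and the truncation error), but it has a genuine gap where the pressure bound is concerned. Having correctly observed that the Galerkin pressure--velocity coupling cancels when you test with $\textbf{E}^A$ (indeed $-b(E^A_{\textbf{u}},E^A_p)+b(E^A_{\textbf{u}},E^A_p)=0$), the pressure survives only inside the stabilization residuals, and your claim that the $\tau_2$ term then places $\|E^A_p\|^2$ on the coercive side does not hold: the $\tau_2$ stabilization pairs $\nabla\cdot\textbf{u}$ with $\nabla\cdot\textbf{v}$ and controls the velocity divergence only, while the $\tau_1$ terms involve $\nabla p$ and at best yield a weighted pressure-gradient quantity, which cannot be converted into the $l^2(L^2)$ pressure norm of the statement (the pressure is not in $H^1_0$, so no Poincar\'e-type inequality is available). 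The paper needs a genuinely separate second step for this: it invokes the discrete inf-sup condition, uses Galerkin orthogonality of the Stokes--Darcy part to write $b(\textbf{v}_h,E^{A,n,\theta}_p)=a_S(E^{I,n,\theta}_{\textbf{u}},\textbf{v}_h)+a_S(E^{A,n,\theta}_{\textbf{u}},\textbf{v}_h)$, kills $b(\textbf{v}_h,p-I_hp)$ via the inclusion $\nabla\cdot V_s^h\subset Q_s^h$ and the $L^2$-projection property, and then bounds the pressure by the velocity estimates already obtained in the first part. Without an argument of this kind your scheme cannot produce the term $\|E^A_p\|^2_{l^2(L^2)}$ in the conclusion.

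Two further discrepancies are worth flagging. First, the skew-symmetry disposal of the advection term fails in this model because $\nabla\cdot\textbf{u}=f_2\neq 0$; integration by parts leaves $-\frac{1}{2}\int_{\Omega}f_2\,|E^A_c|^2$, which is not zero, and the paper instead bounds the advection terms directly with $\sup_{\Omega}|u_i^n|$, Cauchy--Schwarz and Young's inequality. Second, your treatment of the viscosity coupling via Lipschitz continuity of $\mu$ and a discrete Gronwall inequality is a legitimate (and arguably more honest) route, but note that it requires an assumption the paper never makes --- assumption (i) gives only continuity and two-sided boundedness of $\mu$ --- and it is not what the paper does: in the paper's error equation the viscosity is evaluated at $c^n$ in both the exact and discrete contributions, so the nonlinear difference $\int_{\Omega}(\mu(c)-\mu(c_h))\nabla\textbf{u}:\nabla\textbf{v}_h$ never appears, no Gronwall lemma is used, and the stabilization terms are instead bounded by elementwise constants (Observation 1) combined with inverse and interpolation inequalities --- which is precisely where the first-power $h$ in the final bound $C(h^2+h+dt^{2r})$ originates.
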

\begin{proof} In first part we will find bound for auxiliary error part of velocity $\textbf{u}$ and concentration c with respect to $V$ norm and in the second part we will estimate auxiliary error for pressure term with respect to $Q$ norm and finally combining them we will arrive at the desired result. \vspace{2mm} \\
\textbf{First part} Subtracting (18) from (19) in combined form and then simplifying the terms, we have $\forall \hspace{1mm} \textbf{V}_h \in V_s^h \times V_s^h \times Q_s^h \times V_s^h$

\begin{multline}
(M\frac{(\textbf{U}^{n+1}-\textbf{U}^{n+1}_{h})- (\textbf{U}^{n}-\textbf{U}^{n}_{h})}{dt},V_{h}) + B(\textbf{U}^{n,\theta}-\textbf{U}^{n,\theta}_h, \textbf{V}_h)\\
+ \sum_{k=1}^{n_{el}}\tau_k'(M\partial_t (\textbf{U}^n-\textbf{U}^n_h)+ \mathcal{L}(\textbf{U}^{n,\theta}-\textbf{U}_h^{n,\theta}),-\mathcal{L}^* \textbf{V}_h)_{\Omega_k} -\sum_{k=1}^{n_{el}}\tau_k' (\textbf{d},-\mathcal{L}^* \textbf{V}_h)_{\Omega_k}\\
+\sum_{k=1}^{n_{el}}((I-\tau_k^{-1}\tau_k')(M \partial_t(\textbf{U}^{n}-\textbf{U}^{n}_h) + \mathcal{L}(\textbf{U}^{n,\theta}-\textbf{U}^{n,\theta}_h)), -\textbf{V}_h)_{\Omega_k}+ \sum_{k=1}^{n_{el}} (\tau_k^{-1}\tau_k' \textbf{d}, \textbf{V}_h)_{\Omega_k}=0
\end{multline}
where $\textbf{d}$= $(\sum_{i=1}^{n+1}(\frac{1}{dt}M\tau_k')^i)(M\partial_t (\textbf{U}^n-\textbf{U}^n_h) + \mathcal{L}(\textbf{U}^{n,\theta}-\textbf{U}_h^{n,\theta}))$ \vspace{2mm}\\
Let us divide the big expressions into small parts, then using error splitting in each of them and simplifying further, we will have them as follows: \vspace{1mm}\\
Let
\begin{equation}
\begin{split}
I_1 &= (M\frac{(\textbf{U}^{n+1}-\textbf{U}^{n+1}_{h})- (\textbf{U}^{n}-\textbf{U}^{n}_{h})}{dt},V_{h}) \\
& = (\frac{(c^{n+1}-c^{n+1}_{h})- (c^{n}-c^{n}_{h})}{dt},d_{h})\\
 & =  (\frac{(E^{I,n+1}_{c}+E^{A,n+1}_{c})- (E^{I,n}_{c}+E^{A,n}_{c})}{dt},d_{h})\\
 & =  (\frac{E^{A,n+1}_{c}-E^{A,n}_{c}}{dt},d_{h})
\end{split}
\end{equation}
We arrive at the last line after using result 1, deduced in the previous section. \\
\begin{equation}
\begin{split}
I_2 & = B(\textbf{U}^{n,\theta}-\textbf{U}^{n,\theta}_h, \textbf{V}_h) \\
& = \int_{\Omega} \mu(c^n)\bigtriangledown (\textbf{u}^{n,\theta}-\textbf{u}^{n,\theta}_h): \bigtriangledown \textbf{v}_h + \int_{\Omega} \sigma (u_1^{n,\theta}-u^{n,\theta}_{1h})v_{1h}\\
& \quad + \int_{\Omega} \sigma (u_1^{n,\theta}-u^{n,\theta}_{1h})v_{2h}-\int_{\Omega}(\bigtriangledown \cdot \textbf{v}_h)(p^{n,\theta}-p_h^{n,\theta})+ \int_{\Omega}(\bigtriangledown \cdot \textbf{u}^{n,\theta}-\textbf{u}_h^{n,\theta})q_h \\
& \quad + \int_{\Omega} \tilde{\bigtriangledown}(c^{n,\theta}-c^{n,\theta}_h) \cdot \bigtriangledown d_h + \int_{\Omega}d_h \textbf{u}^n \cdot \bigtriangledown (c^{n,\theta}-c^{n,\theta}_h) + \int_{\Omega} \alpha (c^{n,\theta}-c^{n,\theta}_h) d_h \\
& = \int_{\Omega} \mu(c^n)\bigtriangledown (E^{I,n,\theta}_{\textbf{u}} + E^{A,n,\theta}_{\textbf{u}} ): \bigtriangledown \textbf{v}_h + \int_{\Omega} \sigma (E^{I,n,\theta}_{u1} + E^{A,n,\theta}_{u1})v_{1h}\\
& \quad + \int_{\Omega} \sigma (E^{I,n,\theta}_{u2} + E^{A,n,\theta}_{u2})v_{2h}-\int_{\Omega}(\bigtriangledown \cdot \textbf{v}_h)(E^{I,n,\theta}_{p} + E^{A,n,\theta}_{p})\\
& \quad + \int_{\Omega}\bigtriangledown \cdot (E^{I,n,\theta}_{\textbf{u}} + E^{A,n,\theta}_{\textbf{u}})q_h + \int_{\Omega} \tilde{\bigtriangledown}(E^{I,n,\theta}_{c} + E^{A,n,\theta}_{c}) \cdot \bigtriangledown d_h\\
& \quad + \int_{\Omega}d_h \textbf{u}^n \cdot \bigtriangledown (E^{I,n,\theta}_{c} + E^{A,n,\theta}_{c}) + \int_{\Omega} \alpha (E^{I,n,\theta}_{c} + E^{A,n,\theta}_{c}) d_h \\
\end{split}
\end{equation}
\begin{equation}
\begin{split}
& = \int_{\Omega} \mu(c^n)\bigtriangledown E^{I,n,\theta}_{\textbf{u}} : \bigtriangledown \textbf{v}_h + \int_{\Omega} \mu(c^n)\bigtriangledown E^{A,n,\theta}_{\textbf{u}} : \bigtriangledown \textbf{v}_h \\
& \quad + \int_{\Omega} \sigma E^{A,n,\theta}_{u1}v_{1h} + \int_{\Omega} \sigma E^{A,n,\theta}_{u2}v_{2h}- \int_{\Omega}(\bigtriangledown \cdot \textbf{v}_h)(E^{I,n,\theta}_{p} + E^{A,n,\theta}_{p}) \\
& \quad + \int_{\Omega}(\bigtriangledown \cdot  E^{A,n,\theta}_{\textbf{u}}) q_h + \int_{\Omega} \tilde{\bigtriangledown}E^{I,n,\theta}_{c} \cdot \bigtriangledown d_h + \int_{\Omega} \tilde{\bigtriangledown} E^{A,n,\theta}_{c} \cdot \bigtriangledown d_h \\
& \quad + \int_{\Omega}d_h \textbf{u}^n \cdot \bigtriangledown E^{I,n,\theta}_{c} + \int_{\Omega}d_h \textbf{u}^n \cdot \bigtriangledown E^{A,n,\theta}_{c} + \int_{\Omega} \alpha E^{A,n,\theta}_{c} d_h
\end{split}
\end{equation}
Above we have used various properties of the projection operators and arrived at the last expression.
\begin{equation}
\begin{split}
I_3 &= \sum_{k=1}^{n_{el}}(\tau_k'(M\partial_t (\textbf{U}^n-\textbf{U}^n_h)+ \mathcal{L}(\textbf{U}^{n,\theta}-\textbf{U}_h^{n,\theta})-\textbf{d}),-\mathcal{L}^* \textbf{V}_h)_{\Omega_k} \\
&= \sum_{k=1}^{n_{el}} \{(\tau_1'( - \mu(c) \Delta (u_1^{n,\theta}-u_{1h}^{n,\theta}) + \sigma (u_1^{n,\theta}-u_{1h}^{n,\theta}) + \frac{\partial (p^{n,\theta}-p_{h}^{n,\theta})}{\partial x}-d_1),(\mu(c) \Delta v_{1h} \\
& \quad  - \sigma (v_{1h}) + \frac{\partial q_h}{\partial x}))_{\Omega_k} +(\tau_1'( - \mu(c) \Delta (u_2^{n,\theta}-u_{2h}^{n,\theta}) + \sigma (u_2^{n,\theta}-u_{2h}^{n,\theta}) + \frac{\partial (p^{n,\theta}-p_{h}^{n,\theta})}{\partial y}\\ 
&\quad -d_2),(\mu(c) \Delta v_{2h} - \sigma v_{2h} + \frac{\partial q_h}{\partial y}))_{\Omega_k} + (\tau_2'(\bigtriangledown \cdot (\textbf{u}^{n,\theta}-\textbf{u}_h^{n,\theta})-d_3), \bigtriangledown \cdot \textbf{v}_h)_{\Omega_k} + \\
 & \quad  (\tau_3'( \partial_t(c^n-c_h^n) - \bigtriangledown \cdot \tilde{\bigtriangledown}(c^{n,\theta}-c_h^{n,\theta})+ \textbf{u}^n \cdot \bigtriangledown (c^{n,\theta}-c_h^{n,\theta}) + \alpha (c^{n,\theta}-c_h^{n,\theta})-d_4),\\ 
& \quad \bigtriangledown \cdot \tilde{\bigtriangledown} d_h +\textbf{u}^n \cdot \bigtriangledown d_h - \alpha d_h )_{\Omega_k}\} \\
& = \sum_{k=1}^{n_{el}} \{(\tau_1'(- \mu(c) \Delta (E^{I,n,\theta}_{u1}+E_{u1}^{A,n,\theta}) + \sigma (E^{I,n,\theta}_{u1}+E_{u1}^{A,n,\theta}) + \frac{\partial( E^{I,n,\theta}_{p})}{\partial x} +\frac{\partial( E^{A,n,\theta}_{p})}{\partial x} ), \\
& \quad (\mu(c) \Delta v_{1h} - \sigma v_{1h} + \frac{\partial q_h}{\partial x}))_{\Omega_k} +(\tau_1'(- \mu(c) \Delta (E^{I,n,\theta}_{u2}+ E_{u2}^{A,n,\theta}) +  \sigma (E^{I,n,\theta}_{u2}+E_{u2}^{A,n,\theta}) + \\
&\quad  \frac{\partial ( E^{I,n,\theta}_{p}+E_{p}^{A,n,\theta})}{\partial y}),
 (\mu(c) \Delta v_{2h} - \sigma v_{2h} + \frac{\partial q_h}{\partial y}))_{\Omega_k} + (\tau_2'\bigtriangledown \cdot (E^{I,n,\theta}_{\textbf{u}}+E_{\textbf{u}}^{A,n,\theta}), \bigtriangledown \cdot \textbf{v}_h)_{\Omega_k}\\ 
 & \quad + (\tau_3'( \partial_t(E^{I,n}_{c}+E_{c}^{A,n}) - \bigtriangledown \cdot \tilde{\bigtriangledown}(E^{I,n,\theta}_{c}+  E_{c}^{A,n,\theta}) + \textbf{u}^n \cdot \bigtriangledown (E^{I,n,\theta}_{c}+E_{c}^{A,n,\theta}) + \\
& \quad \alpha (E^{I,n,\theta}_{c}+E_{c}^{A,n,\theta})-d_4),\bigtriangledown \cdot \tilde{\bigtriangledown} d_h +\textbf{u}^n \cdot \bigtriangledown d_h - \alpha d_h )_{\Omega_k}\}
\end{split} 
\end{equation}
\begin{equation}
\begin{split}
I_4 & = \sum_{k=1}^{n_{el}}\tau_k'(-\textbf{d},-\mathcal{L}^* \textbf{V}_h)_{\Omega_k} = \sum_{k=1}^{n_{el}}\{ \tau_3' ( d_4, \bigtriangledown \cdot \tilde{\bigtriangledown} d_h +\textbf{u}^n \cdot \bigtriangledown d_h - \alpha d_h )_{\Omega_k}\}
\end{split}
\end{equation}
\begin{equation}
\begin{split}
I_5 &= \sum_{k=1}^{n_{el}}((I-\tau_k^{-1}\tau_k')(M \partial_t(\textbf{U}^{n}-\textbf{U}^{n}_h) + \mathcal{L}(\textbf{U}^{n,\theta}-\textbf{U}^{n,\theta}_h)), -\textbf{V}_h)_{\Omega_k}\\
& = \sum_{k=1}^{n_{el}} \{((1-\tau_1^{-1}\tau_1')( - \mu(c) \Delta (u_1^{n,\theta}-u_{1h}^{n,\theta}) + \sigma (u_1^{n,\theta}-u_{1h}^{n,\theta}) + \frac{\partial (p^{n,\theta}-p_{h}^{n,\theta})}{\partial x}),-v_{1h})_{\Omega_k} \\
& \quad + ((1-\tau_1^{-1}\tau_1')( - \mu(c) \Delta (u_2^{n,\theta}-u_{2h}^{n,\theta}) + \sigma (u_2^{n,\theta}-u_{2h}^{n,\theta}) +\frac{\partial (p^{n,\theta}-p_{h}^{n,\theta})}{\partial y}),-v_{2h})_{\Omega_k} + \\
 &\quad  ((1-\tau_2^{-1}\tau_2')\bigtriangledown \cdot (\textbf{u}^{n,\theta}-\textbf{u}_h^{n,\theta}), -q_h)_{\Omega_k} + ((1-\tau_3^{-1}\tau_3')( \partial_t(c^n-c_h^n) - \bigtriangledown \cdot \tilde{\bigtriangledown}(c^{n,\theta}-c_h^{n,\theta}) \\ 
 & \quad + \textbf{u}^n \cdot \bigtriangledown (c^{n,\theta}-c_h^{n,\theta}) + \alpha (c^{n,\theta}-c_h^{n,\theta})),-d_h )_{\Omega_k}\} \\
 & = \sum_{k=1}^{n_{el}}((1-\tau_3^{-1}\tau_3') ( \partial_t E^{I,n}_{c} + \partial_t E^{A,n}_{c} - \bigtriangledown \cdot \tilde{\bigtriangledown}(E^{I,n,\theta}_{c}+E_{c}^{A,n,\theta})+ \textbf{u} \cdot \bigtriangledown (E^{I,n,\theta}_{c}+E_{c}^{A,n,\theta}) + \\
 & \quad \alpha (E^{I,n,\theta}_{c}+E_{c}^{A,n,\theta})),-d_h )_{\Omega_k}
\end{split}
\end{equation}
since $(1- \tau_1^{-1}\tau_1')=0=(1- \tau_2^{-1}\tau_2')$ and the last term,
\begin{equation}
\begin{split}
I_6 & = \sum_{k=1}^{n_{el}} (\tau_k^{-1}\tau_k' \textbf{d}, \textbf{V}_h)_{\Omega_k}=\sum_{k=1}^{n_{el}}(\tau_3^{-1}\tau_3'd_4, d_h)_{\Omega_k}
\end{split}
\end{equation}
Now taking all these terms together, (23) becomes
\begin{equation}
I_1+I_2+I_3+I_4+I_5+ I_6 =0, \hspace{1mm} \forall \hspace{1mm} \textbf{V}_h \in V_s^h \times V_s^h \times Q_s^h \times V_s^h \\
\end{equation}
This implies
\begin{multline}
I_1 + \int_{\Omega} \mu(c^n)\bigtriangledown E^{A,n,\theta}_{\textbf{u}} : \bigtriangledown \textbf{v}_h  +\int_{\Omega} \tilde{\bigtriangledown}E^{A,n,\theta}_{c} \cdot \bigtriangledown d_h +  \int_{\Omega} \sigma E^{A,n,\theta}_{u1}v_{1h}
+ \int_{\Omega} \sigma E^{A,n,\theta}_{u2}v_{2h} \\ +\int_{\Omega} \alpha E^{A,n,\theta}_{c} d_h 
= \int_{\Omega}(\bigtriangledown \cdot \textbf{v}_h)(E^{I,n,\theta}_{p} + E^{A,n,\theta}_{p})\\
-\int_{\Omega}(\bigtriangledown \cdot  E^{A,n,\theta}_{\textbf{u}}) q_h- \int_{\Omega} \tilde{\bigtriangledown}E^{I,n,\theta}_{c} \cdot \bigtriangledown d_h -\int_{\Omega}d_h \textbf{u}^n \cdot \bigtriangledown (E^{I,n,\theta}_{c}+E^{A,n,\theta}_{c}) \\
 -\int_{\Omega} \mu(c^n)\bigtriangledown E^{I,n,\theta}_{\textbf{u}} : \bigtriangledown \textbf{v}_h-I_3-I_4-I_5-I_6 \\ \hspace{1mm} \forall \hspace{1mm} \textbf{V}_h \in V_s^h \times V_s^h \times Q_s^h \times V_s^h
\end{multline} 
Now we will treat each term separately to find out the estimate. Before further proceeding let us mention an important consideration: since the above equation holds for all $\textbf{V}_h \in V_s^h \times V_s^h \times Q_s^h \times V_s^h$, therefore in each term we replace $v_{1h},v_{2h},q_h,d_h$ by $E^{A,n,\theta}_{u1},E^{A,n,\theta}_{u2},E^{A,n,\theta}_{p},E^{A,n,\theta}_{c}$ respectively as these auxiliary part of the errors belonging to their respective finite element spaces. From now onwards we will start derivation of each expression after considering the replacements directly.\vspace{2mm}\\
Let us start with $I_1$ as follows:
\begin{equation}
\begin{split}
(\frac{E^{A,n+1}_{c}-E^{A,n}_{c}}{dt},E^{A,n,\theta}_{c}) & = (\frac{E^{A,n+1}_{c}-E^{A,n}_{c}}{dt},\frac{1+\theta}{2} E^{A,n+1}_{c}+ \frac{1-\theta}{2} E^{A,n}_{c}) \\
&= \frac{1+\theta}{2 dt}(E^{A,n+1}_{c},E^{A,n+1}_{c})- \frac{1+\theta}{2 dt}(E^{A,n}_{c},E^{A,n+1}_{c})\\
& \quad + \frac{1-\theta}{2 dt}(E^{A,n+1}_{c},E^{A,n}_{c})-  \frac{1-\theta}{2 dt}(E^{A,n+1}_{c},E^{A,n+1}_{c})\\
& =\frac{1+\theta}{2 dt} \|E^{A,n+1}_{c}\|^2 - \frac{1-\theta}{2 dt} \|E^{A,n}_{c}\|^2 -\frac{\theta}{ dt} (E^{A,n}_{c},E^{A,n+1}_{c})\\
&= \frac{1}{2 dt}(\|E^{A,n+1}_{c}\|^2-\|E^{A,n}_{c}\|^2)+ \frac{\theta}{2 dt}(\|E^{A,n+1}_{c}\|^2-\|E^{A,n}_{c}\|^2)^2\\
& \geq \frac{1}{2 dt}(\|E^{A,n+1}_{c}\|^2-\|E^{A,n}_{c}\|^2)
\end{split}
\end{equation}
From $I_2$ we will select few terms to find out their lower bounds as follows:
\begin{equation}
\begin{split}
\int_{\Omega} \mu(c^n)\bigtriangledown E^{A,n,\theta}_{\textbf{u}} : \bigtriangledown  E^{A,n,\theta}_{\textbf{u}} & = \int_{\Omega}\mu(c^n)\{\sum_{i=1}^{2}(\frac{\partial E^{A,n,\theta}_{ui}}{\partial x})^2 + \sum_{i=1}^{2}(\frac{\partial E^{A,n,\theta}_{ui}}{\partial y})^2\}\\
& \geq \mu_l  \{\sum_{i=1}^{2}\int_{\Omega}(\frac{\partial E^{A,n,\theta}_{ui}}{\partial x})^2+ \sum_{i=1}^{2}\int_{\Omega} (\frac{\partial E^{A,n,\theta}_{ui}}{\partial y})^2 \}\\
& \geq \mu_l \{ \|\frac{\partial E^{A,n,\theta}_{u1}}{\partial x}\|^2 + \|\frac{\partial E^{A,n,\theta}_{u1}}{\partial y}\|^2 + \|\frac{\partial E^{A,n,\theta}_{u2}}{\partial x}\|^2 + \|\frac{\partial E^{A,n,\theta}_{u2}}{\partial y}\|^2 \}
\end{split}
\end{equation}
and
\begin{equation}
\begin{split}
 \int_{\Omega} \tilde{\bigtriangledown} E^{A,n,\theta}_{c} \cdot \bigtriangledown E^{A,n,\theta}_{c} & = \int_{\Omega}D_1 (\frac{\partial E^{A,n,\theta}_{c}}{\partial x})^2 + \int_{\Omega}D_2 (\frac{\partial E^{A,n,\theta}_{c}}{\partial y})^2\\
 & \geq D_l\{ \|\frac{\partial E^{A,n,\theta}_{c}}{\partial x}\|^2 + \| \frac{\partial E^{A,n,\theta}_{c}}{\partial y}\|^2 \}
\end{split}
\end{equation}
where $D_l$= min $\{ \underset{\Omega}{inf} D_1, \underset{\Omega}{inf} D_2  \}$. \\
Another few terms of $I_2$ can be easily simplified as,
\begin{equation}
\begin{split}
\int_{\Omega} \sigma E^{A,n,\theta}_{u1}E^{A,n,\theta}_{u1} &= \sigma \|E^{A,n,\theta}_{u1}\|^2\\ \int_{\Omega} \sigma E^{A,n,\theta}_{u2}E^{A,n,\theta}_{u2} &= \sigma \|E^{A,n,\theta}_{u2}\|^2 \\
\int_{\Omega} \alpha E^{A,n,\theta}_{c} E^{A,n,\theta}_{c} & = \alpha \|E^{A,n,\theta}_{c}\|^2 \\
\int_{\Omega}(\bigtriangledown \cdot E^{A,n,\theta}_\textbf{u})(E^{I,n,\theta}_{p} + E^{A,n,\theta}_{p})-\int_{\Omega}(\bigtriangledown \cdot  E^{A,n,\theta}_{\textbf{u}}) E^{A,n,\theta}_{p}  &=  \int_{\Omega}(\bigtriangledown \cdot E^{A,n,\theta}_\textbf{u})E^{I,n,\theta}_{p}
\end{split}
\end{equation}
Combining all these inequalities (32) becomes,
\begin{multline}
\frac{1}{2 dt}(\|E^{A,n+1}_{u1}\|^2-\|E^{A,n}_{u1}\|^2)+ \frac{1}{2 dt}(\|E^{A,n+1}_{u2}\|^2-\|E^{A,n}_{u2}\|^2)+ \frac{1}{2 dt}(\|E^{A,n+1}_{c}\|^2-\|E^{A,n}_{c}\|^2) \\
+ \mu_l \{ \|\frac{\partial E^{A,n,\theta}_{u1}}{\partial x}\|^2 + \|\frac{\partial E^{A,n,\theta}_{u1}}{\partial y}\|^2 + \|\frac{\partial E^{A,n,\theta}_{u2}}{\partial x}\|^2 + \|\frac{\partial E^{A,n,\theta}_{u2}}{\partial y}\|^2 \}+ \\
D_l\{ \|\frac{\partial E^{A,n,\theta}_{c}}{\partial x}\|^2 + \| \frac{\partial E^{A,n,\theta}_{c}}{\partial y}\|^2 \} + \sigma \|E^{A,n,\theta}_{u1}\|^2 +\sigma \|E^{A,n,\theta}_{u2}\|^2 +  \alpha \|E^{A,n,\theta}_{c}\|^2 \\
\leq \int_{\Omega}(\bigtriangledown \cdot E^{A,n,\theta}_\textbf{u})E^{I,n,\theta}_{p}- \int_{\Omega} \tilde{\bigtriangledown}E^{I,n,\theta}_{c} \cdot \bigtriangledown E^{A,n,\theta}_c -\int_{\Omega}E^{A,n,\theta}_c \textbf{u} \cdot \bigtriangledown E^{A,n,\theta}_{c}\\ -\int_{\Omega}E^{A,n,\theta}_c \textbf{u} \cdot \bigtriangledown E^{I,n,\theta}_{c} 
-\int_{\Omega} \mu(c^n)\bigtriangledown E^{I,n,\theta}_{\textbf{u}} : \bigtriangledown E^{A,n,\theta}_\textbf{u}-I_3-I_4-I_5-I_6
\end{multline}
Now we will find upper bounds of the terms in the RHS of the above equation.We will use Cauchy-Schwarz and Young's inequality to reach at the desired bounds.Let us start with the first term as follows:\vspace{1mm}\\
\begin{equation}
\begin{split}
\int_{\Omega}(\bigtriangledown \cdot E^{A,n,\theta}_\textbf{u})E^{I,n,\theta}_{p} &= \int_{\Omega} (\frac{\partial E^{A,n,\theta}_{u1}}{\partial x}+\frac{\partial E^{A,n,\theta}_{u2}}{\partial x})E^{I,n,\theta}_{p}\\
& \quad (applying \hspace{1mm} Cauchy-Schwarz \hspace{1mm} inequality) \\
& \leq (\|\frac{\partial E^{A,n,\theta}_{u1}}{\partial x}\| + \|\frac{\partial E^{A,n,\theta}_{u2}}{\partial y}\|) \|E^{I,n,\theta}_{p}\| \\
& \quad (applying \hspace{1mm} Young's \hspace{1mm} inequality \hspace{1mm} for \hspace{1mm} each \hspace{1mm} of \hspace{1mm} the \hspace{1mm} two \hspace{1mm} terms)\\
& \leq \frac{1}{2 \epsilon_1}(\|\frac{\partial E^{A,n,\theta}_{u1}}{\partial x}\|^2 + \|\frac{\partial E^{A,n,\theta}_{u2}}{\partial y}\|^2) + \epsilon_1 \|E^{I,n,\theta}_{p}\|^2 \\
& \leq \frac{1}{2 \epsilon_1}(\|\frac{\partial E^{A,n,\theta}_{u1}}{\partial x}\|^2 + \|\frac{\partial E^{A,n,\theta}_{u2}}{\partial y}\|^2) + \epsilon_1 (\frac{1+\theta}{2}\|E^{I,n+1}_{p}\| + \frac{1-\theta}{2}\|E^{I,n}_{p}\|)^2 \\
& \leq \frac{1}{2 \epsilon_1}(\|\frac{\partial E^{A,n,\theta}_{u1}}{\partial x}\|^2 + \|\frac{\partial E^{A,n,\theta}_{u2}}{\partial y}\|^2) + \epsilon_1 C^2 h^2(\frac{1+\theta}{2}\mid p^{n+1}\mid_1 + \frac{1-\theta}{2}\mid p^n \mid_1)^2 \\
\end{split}
\end{equation}
Similarly for each term we will use $Cauchy-Schwarz $ inequality and $ Young's $ inequality  wherever it will be needed, but without mentioning about them now onwards. \\
Second term
\begin{equation}
\begin{split}
- \int_{\Omega} \tilde{\bigtriangledown}E^{I,n,\theta}_{c} \cdot \bigtriangledown E^{A,n,\theta}_c & = -\int_{\Omega}(D_1 \frac{\partial E^{I,n,\theta}_{c}}{\partial x} \frac{\partial E^{A,n,\theta}_{c}}{\partial x}+ D_2 \frac{\partial E^{I,n,\theta}_{c}}{\partial y} \frac{\partial E^{A,n,\theta}_{c}}{\partial y} )\\
& \leq D_m (\|\frac{\partial E^{I,n,\theta}_{c}}{\partial x}\| \|\frac{\partial E^{A,n,\theta}_{c}}{\partial x}\|+ \|\frac{\partial E^{I,n,\theta}_{c}}{\partial y}\| \|\frac{\partial E^{A,n,\theta}_{c}}{\partial y}\|)\\
& \leq \frac{D_m}{2 \epsilon_2}(\|\frac{\partial E^{A,n,\theta}_{c}}{\partial x}\|^2 + \|\frac{\partial E^{A,n,\theta}_{c}}{\partial y}\|^2) + \frac{D_m \epsilon_2}{2} (\|\frac{\partial E^{I,n,\theta}_{c}}{\partial x}\|^2 + \|\frac{\partial E^{I,n,\theta}_{c}}{\partial y}\|^2) \\
&= \frac{D_m}{2 \epsilon_2}(\|\frac{\partial E^{A,n,\theta}_{c}}{\partial x}\|^2 + \|\frac{\partial E^{A,n,\theta}_{c}}{\partial y}\|^2)+  \frac{D_m \epsilon_2}{2} \mid E^{I,n,\theta}_c \mid_1^2\\
& \leq \frac{D_m}{2 \epsilon_2}(\|\frac{\partial E^{A,n,\theta}_{c}}{\partial x}\|^2 + \|\frac{\partial E^{A,n,\theta}_{c}}{\partial y}\|^2) + \frac{D_m \epsilon_2}{2} C^2 h^2 (\frac{1+\theta}{2}\mid c^{n+1} \mid_2+\frac{1-\theta}{2}\mid c^n \mid_2)^2
\end{split}
\end{equation}
where $D_m$= max $\{ \underset{\Omega}{sup} D_1, \underset{\Omega}{sup} D_2 \}$ \\
Next term,
\begin{equation}
\begin{split}
-\int_{\Omega}E^{A,n,\theta}_c \textbf{u}^n \cdot \bigtriangledown E^{A,n,\theta}_{c} & = -\int_\Omega(u_1^n E^{A,n,\theta}_c \frac{\partial E^{A,n,\theta}_c}{\partial x} + u_2^n E^{A,n,\theta}_c \frac{\partial E^{A,n,\theta}_c}{\partial y})\\
& \leq \underset{\Omega}{sup}\mid u_1^n \mid  \|E^{A,n,\theta}_c\| \|\frac{\partial E^{A,n,\theta}_c}{\partial x}\| + \underset{\Omega}{sup}\mid u_2^n \mid \|E^{A,n,\theta}_c\| \|\frac{\partial E^{A,n,\theta}_c}{\partial y}\| \\
& = (C_1^n \|\frac{\partial E^{A,n,\theta}_c}{\partial x}\| + C_2^n \|\frac{\partial E^{A,n,\theta}_c}{\partial y}\|) \| E^{A,n,\theta}_c\|\\
& \leq \frac{1}{2 \epsilon_3}(C_1^n\|\frac{\partial E^{A,n,\theta}_{c}}{\partial x}\|^2 + C_2^n\|\frac{\partial E^{A,n,\theta}_{c}}{\partial y}\|^2)+  \frac{\epsilon_3}{2}(C_1^n+C_2^n) \| E^{A,n,\theta}_c\|^2
\end{split}
\end{equation}
where $C_1^n$= $\underset{\Omega}{sup}$ $\mid u_1^n \mid$ and $C_2^n$= $\underset{\Omega}{sup}$ $\mid u_2^n \mid$. \\
Similarly the next term 
\begin{equation}
\begin{split}
-\int_{\Omega}E^{A,n,\theta}_c \textbf{u}^n \cdot \bigtriangledown E^{I,n,\theta}_{c} & \leq \frac{1}{2 \epsilon_3}(C_1^n\|\frac{\partial E^{I,n,\theta}_{c}}{\partial x}\|^2 + C_2^n\|\frac{\partial E^{I,n,\theta}_{c}}{\partial y}\|^2)+  \frac{\epsilon_3}{2}(C_1^n+C_2^n) \| E^{A,n,\theta}_c\|^2 \\
& \leq \frac{C_1^n+C_2^n}{2 \epsilon_3} \mid E^{I,n,\theta}_c \mid_1^2 + \frac{\epsilon_3}{2}(C_1^n+C_2^n) \| E^{A,n,\theta}_c\|^2 \\
& \leq \frac{C_1^n+C_2^n}{2 \epsilon_3} C^2 h^2 (\frac{1+\theta}{2}\mid c^{n+1} \mid_2 + \frac{1-\theta}{2} \mid c^n \mid_2)^2+ \frac{\epsilon_3}{2}(C_1^n+C_2^n) \| E^{A,n,\theta}_c\|^2 
\end{split}
\end{equation}
The next term,
\begin{equation}
\begin{split}
-\int_{\Omega} \mu(c^n)\bigtriangledown E^{I,n,\theta}_{\textbf{u}} : \bigtriangledown E^{A,n,\theta}_\textbf{u} & = - \int_{\Omega}\mu(c^n)(\frac{\partial E^{I,n,\theta}_{u1}}{\partial x} \frac{\partial E^{A,n,\theta}_{u1}}{\partial x} +\frac{\partial E^{I,n,\theta}_{u1}}{\partial y} \frac{\partial E^{A,n,\theta}_{u1}}{\partial y})\\
& \quad - \int_{\Omega}\mu(c^n)(\frac{\partial E^{I,n,\theta}_{u2}}{\partial x} \frac{\partial E^{A,n,\theta}_{u2}}{\partial x} +\frac{\partial E^{I,n,\theta}_{u2}}{\partial y} \frac{\partial E^{A,n,\theta}_{u2}}{\partial y})\\
& \leq \mu_u(\|\frac{\partial E^{I,n,\theta}_{u1}}{\partial x}\| \|\frac{\partial E^{A,n,\theta}_{u1}}{\partial x}\|+\|\frac{\partial E^{I,n,\theta}_{u1}}{\partial y}\| \|\frac{\partial E^{A,n,\theta}_{u1}}{\partial y}\| +\\
& \quad \|\frac{\partial E^{I,n,\theta}_{u2}}{\partial x}\| \|\frac{\partial E^{A,n,\theta}_{u2}}{\partial x}\|+ \|\frac{\partial E^{I,n,\theta}_{u2}}{\partial y}\| \|\frac{\partial E^{A,n,\theta}_{u2}}{\partial y}\|) \\
& \leq \frac{\epsilon_4 \mu_u}{2} \sum_{i=1}^{2} (\|\frac{\partial E^{I,n,\theta}_{ui}}{\partial x} \|^2+ \|\frac{\partial E^{I,n,\theta}_{ui}}{\partial y} \|^2)+\\
& \quad \frac{\mu_u}{2 \epsilon_4} \sum_{i=1}^{2} (\|\frac{\partial E^{A,n,\theta}_{ui}}{\partial x} \|^2+ \|\frac{\partial E^{A,n,\theta}_{ui}}{\partial y} \|^2)\\
& \leq \epsilon_4 \mu_u \sum_{i=1}^{2} C^2 h^2(\frac{1+\theta}{2}\mid u_i^{n+1}\mid_2+ \frac{1-\theta}{2} \mid u_i^n\mid_2)^2+\\
& \quad \frac{\mu_u}{2 \epsilon_4} \sum_{i=1}^{2} (\|\frac{\partial E^{A,n,\theta}_{ui}}{\partial x} \|^2+ \|\frac{\partial E^{A,n,\theta}_{ui}}{\partial y} \|^2)
\end{split}
\end{equation}
Now we will find bounds for each remaining term of $I_3$. Before going to further calculations let us mention an important observation:\vspace{2mm}\\
\begin{observation}
According to the choice of the finite element spaces $V_s^h$ and $Q_s^h$, we can clearly say that over each element sub-domain every function belonging to that spaces and their first and second order derivatives all are bounded functions. We can always find positive finite real numbers to bound each of the functions over element sub-domain. We will use this fact for several times further. 
\end{observation}
Let us take the first term of $(-I_3)$ along with earlier mentioned replacements. $I_3$ has four terms and we will find bounds for each of them separately. Let us denote the terms by $I_3^1, I_3^2, I_3^3, I_3^4$ respectively. Here we start with $I_3^1$,

\begin{multline}
-I_3^1= -\sum_{k=1}^{n_{el}} (\tau_1'( - \mu(c^n) \Delta (E^{I,n,\theta}_{u1}+E_{u1}^{A,n,\theta}) + \sigma (E^{I,n,\theta}_{u1}+E_{u1}^{A,n,\theta}) + \frac{\partial( E^{I,n,\theta}_{p})}{\partial x} + \\
\quad  \frac{\partial( E^{A,n,\theta}_{p})}{\partial x} ),\mu(c^n) \Delta E_{u1}^{A,n,\theta} - \sigma E_{u1}^{A,n,\theta} + \frac{\partial E_{p}^{A,n,\theta}}{\partial x})_{\Omega_k}\\
 =\tau_1' (\mu(c^n) \Delta (E^{I,n,\theta}_{u1}+E_{u1}^{A,n,\theta}) - \sigma (E^{I,n,\theta}_{u1}+E_{u1}^{A,n,\theta})- \frac{\partial( E^{I,n,\theta}_{p})}{\partial x} - 
 \frac{\partial( E^{A,n,\theta}_{p})}{\partial x} ,\\
  \quad \mu(c^n) \Delta E_{u1}^{A,n,\theta} - \sigma E_{u1}^{A,n,\theta} + \frac{\partial E_{p}^{A,n,\theta}}{\partial x})_{\tilde{\Omega}}\\
 =\tau_1' (\mu(c^n) \Delta E^{I,n,\theta}_{u1} - \sigma E^{I,n,\theta}_{u1}- \frac{\partial( E^{I,n,\theta}_{p})}{\partial x} ,\mu(c^n) \Delta E_{u1}^{A,n,\theta} - \sigma E_{u1}^{A,n,\theta} + \frac{\partial E_{p}^{A,n,\theta}}{\partial x})_{\tilde{\Omega}} \\
 \quad + \tau_1' (\mu(c^n) \Delta E^{A,n,\theta}_{u1} - \sigma E^{A,n,\theta}_{u1}- \frac{\partial( E^{A,n,\theta}_{p})}{\partial x},\mu(c^n) \Delta E_{u1}^{A,n,\theta} - \sigma E_{u1}^{A,n,\theta} + \frac{\partial E_{p}^{A,n,\theta}}{\partial x})_{\tilde{\Omega}} \\
\end{multline}
We calculate the bounds for the above two terms separately. The calculation for the first part is as follows:
\begin{multline}
\tau_1' (\mu(c^n) \Delta E^{I,n,\theta}_{u1} - \sigma E^{I,n,\theta}_{u1}- \frac{\partial( E^{I,n,\theta}_{p})}{\partial x} ,\mu(c^n) \Delta E_{u1}^{A,n,\theta} - \sigma E_{u1}^{A,n,\theta} + \frac{\partial E_{p}^{A,n,\theta}}{\partial x})_{\tilde{\Omega}} \\
=\sum_{k=1}^{n_{el}} \tau_1 ( \mu(c^n)^2 \Delta E^{I,n,\theta}_{u1} \Delta E_{u1}^{A,n,\theta}- \sigma \mu(c^n) E^{I,n,\theta}_{u1} \Delta E_{u1}^{A,n,\theta} - \mu(c^n)\frac{\partial E_{p}^{I,n,\theta}}{\partial x} \Delta E_{u1}^{A,n,\theta} \\
- \sigma \mu(c^n) E^{A,n,\theta}_{u1} \Delta E_{u1}^{I,n,\theta} + \sigma^2 E^{A,n,\theta}_{u1} E^{I,n,\theta}_{u1} + \sigma E^{A,n,\theta}_{u1} \frac{\partial E_{p}^{I,n,\theta}}{\partial x} + \\ 
\mu(c^n) \Delta E^{I,n,\theta}_{u1}
 \frac{\partial E_{p}^{A,n,\theta}}{\partial x} - \sigma E^{I,n,\theta}_{u1} \frac{\partial E_{p}^{A,n,\theta}}{\partial x} - \frac{\partial E_{p}^{A,n,\theta}}{\partial x} \frac{\partial E_{p}^{I,n,\theta}}{\partial x} )_{\Omega_k}\\
 \leq \sum_{k=1}^{n_{el}} \mid \tau_1 \mid (\mu_u^2 \|\Delta E^{I,n,\theta}_{u1}\|_k \|\Delta E^{A,n,\theta}_{u1}\|_k + \sigma \mu_u \|E^{I,n,\theta}_{u1}\|_k \|\Delta E^{A,n,\theta}_{u1}\|_k + 
 \mu_u\\
  \|\frac{\partial E_{p}^{I,n,\theta}}{\partial x}\|_k \| \Delta E_{u1}^{A,n,\theta}\|_k + \sigma \mu_u \| E^{A,n,\theta}_{u1}\|_k \| \Delta E_{u1}^{I,n,\theta}\|_k+ \sigma^2 \|  E^{A,n,\theta}_{u1}\|_k \\
  \| E_{u1}^{I,n,\theta}\|_k + \sigma \| E^{A,n,\theta}_{u1}\|_k \|\frac{\partial E_{p}^{I,n,\theta}}{\partial x} \|_k + \mu_u \|\Delta E^{I,n,\theta}_{u1}\|_k \|\frac{\partial E_{p}^{A,n,\theta}}{\partial x}\|_k + \\
\sigma \|E^{I,n,\theta}_{u1}\|_k \|\frac{\partial E_{p}^{A,n,\theta}}{\partial x}\|_k + \|\frac{\partial E_{p}^{A,n,\theta}}{\partial x}\|_k \|\frac{\partial E_{p}^{I,n,\theta}}{\partial x}\|_k) \\
 \end{multline}
Let $B_{1k},B_{2k},B_{3k}$ be the bounds on $E^{A,n,\theta}_{u1}, \Delta E^{A,n,\theta}_{u1}, \frac{\partial E^{A,n,\theta}_{p}}{\partial x}$ respectively on each element sub domain under the above observation 1 and $C_{\tau_1}$ be the maximum numerical value of $\tau_1$ over the domain $\Omega$ .

 \begin{multline}
\leq C_{\tau_1} \sum_{k=1}^{n_{el}} (\mu_u^2  B_{2k} \|\Delta E^{I,n,\theta}_{u1}\|_k  + \sigma \mu_u B_{2k} \|E^{I,n,\theta}_{u1}\|_k  + \mu_u B_{2k}
\mid E_{p}^{I,n,\theta} \mid_{1,k} + \\
 \sigma \mu_u B_{1k} \| \Delta E_{u1}^{I,n,\theta}\|_k+ \sigma^2 B_{1k} \| E_{u1}^{I,n,\theta}\|_k + \sigma B_{1k} \mid E_{p}^{I,n,\theta}\mid_{1,k} + \\
 \mu_u  B_{3k} \|\Delta E^{I,n,\theta}_{u1}\|_k + \sigma B_{3k} \|E^{I,n,\theta}_{u1}\|_k + B_{3k} \mid E_{p}^{I,n,\theta} \mid_{1,k})\\
\end{multline}
The expressions in the last line are obtained by applying bounds  on  the  members of  finite  element  spaces  over  each sub-domain. \\
Now  applying Inverse Inequality on domain $\Omega $ under the required assumption
\begin{multline}
\leq C_{\tau_1}  \{(\sum_{k=1}^{n_{el}}(\mu_u^2 B_{2k} + \sigma \mu_u B_{1k} +\mu_u B_{3k} ))h^{-1} \mid E^{I,n,\theta}_{u1}\mid_1 + (\sum_{k=1}^{n_{el}} \sigma (\mu_u B_{2k}+\sigma B_{1k}+\\
  B_{3k})) \|E^{I,n,\theta}_{u1} \| 
+ (\sum_{k=1}^{n_{el}}(\mu_u B_{2k}+ \sigma B_{1k}+ B_{3k}))\mid E_{p}^{I,n,\theta} \mid_1 \}\\
\leq C_{\tau_1}  \{ (\sum_{k=1}^{n_{el}}(\mu_u^2 B_{2k} + \sigma \mu_u B_{1k} +\mu_u B_{3k} ))C (\frac{1+\theta}{2} \mid u_1^{n+1} \mid_2 + \frac{1-\theta}{2}\mid u_1^n \mid_2) + \\
(\sum_{k=1}^{n_{el}}\sigma (\mu_u B_{2k}+\sigma B_{1k}+B_{3k})) C h^2 (\frac{1+\theta}{2} \mid u_1^{n+1} \mid_2 + \frac{1-\theta}{2}\mid u_1^n \mid_2)+ \\
(\sum_{k=1}^{n_{el}}(\mu_u B_{2k}+ \sigma B_{1k}+ B_{3k})) C h (\frac{1+\theta}{2} \mid p^{n+1} \mid_1 + \frac{1-\theta}{2}\mid p^n \mid_1) \}
\end{multline}
This completes the first part. Now we see that the second part has alike expression with auxiliary error terms in the place of interpolation error terms. Hence proceeding in the same way as above and applying bounds for elements belonging to $V_s^h$ and $Q_s^h$ spaces we will bound the second part as follows:
\begin{multline}
\tau_1' (\mu(c) \Delta E^{A,n,\theta}_{u1} - \sigma E^{A,n,\theta}_{u1}- \frac{\partial( E^{A,n,\theta}_{p})}{\partial x},\mu(c) \Delta E_{u1}^{A,n,\theta} - \sigma E_{u1}^{A,n,\theta} + \frac{\partial E_{p}^{A,n,\theta}}{\partial x})_{\tilde{\Omega}} \\
\leq C_{\tau_1} \sum_{k=1}^{n_{el}} (\mu_u^2 B_{2k}^2 + 2 \sigma \mu_u B_{1k} B_{2k} + \mu_u B_{2k} B_{3k} + \sigma^2 B_{1k}^2 + \sigma B_{1k} B_{3k} +\\
\mu_u B_{3k} B_{2k} + \sigma B_{1k} B_{3k} +B_{3k}^2 )\\
\leq C_{\tau_1} \sum_{k=1}^{n_{el}} M_{1k} \hspace{80mm}
\end{multline}
where $M_{1k}$ denotes the big sum of the constants. Combining all these results and putting into (43) we will have

\begin{equation}
\begin{split}
-I_3^1 & \leq  C_{\tau_1} \{ (\sum_{k=1}^{n_{el}}(\mu_u^2 B_{2k} + \sigma \mu_u B_{1k} +\mu_u B_{3k} ))C (\frac{1+\theta}{2} \mid u_1^{n+1} \mid_2 +  \\
& \quad \frac{1-\theta}{2}\mid u_1^n \mid_2) + (\sum_{k=1}^{n_{el}}\sigma (\mu_u B_{2k}+\sigma B_{1k}+B_{3k})) C h^2 (\frac{1+\theta}{2} \mid u_1^{n+1} \mid_2 +  \\
& \quad \frac{1-\theta}{2}\mid u_1^n \mid_2)+ (\sum_{k=1}^{n_{el}}(\mu_u B_{2k}+ \sigma B_{1k}+ B_{3k})) C h (\frac{1+\theta}{2} \mid p^{n+1} \mid_1 +  \\
& \quad \frac{1-\theta}{2}\mid p^n \mid_1) \}
+ C_{\tau_1} \sum_{k=1}^{n_{el}} M_{1k}
\end{split}
\end{equation} 
This completes the derivation of bound on the first term of $(-I_3)$. Now we see that the second term of $I_3$ in (27) is exactly similar to its first term, only the subscripts  are different that is $u_2$ replaces $u_1$ in subscript. Therefore considering the constants $B_{1k}',B_{2k}',B_{3k}'$ as the bounds for $E^{A,n,\theta}_{u2}, \Delta E^{A,n,\theta}_{u2},\frac{\partial E_{p}^{A,n,\theta}}{\partial y} $ respectively on each element sub domain, we can bound the term as follows:
\begin{equation}
\begin{split}
-I_3^2 & \leq  C_{\tau_1} \mid \{ (\sum_{k=1}^{n_{el}}(\mu_u^2 B_{2k}' + \sigma \mu_u B_{1k}' +\mu_u B_{3k}' ))C (\frac{1+\theta}{2} \mid u_2^{n+1} \mid_2 +  \\
& \quad \frac{1-\theta}{2}\mid u_2^n \mid_2) + (\sum_{k=1}^{n_{el}}\sigma (\mu_u B_{2k}'+\sigma B_{1k}'+B_{3k}')) C h^2 (\frac{1+\theta}{2} \mid u_2^{n+1} \mid_2 +  \\
& \quad \frac{1-\theta}{2}\mid u_2^n \mid_2)+ (\sum_{k=1}^{n_{el}}(\mu_u B_{2k}'+ \sigma^2 B_{1k}'+ B_{3k}')) C h (\frac{1+\theta}{2} \mid p^{n+1} \mid_1 +  \\
& \quad \frac{1-\theta}{2}\mid p^n \mid_1) \}
+ C_{\tau_1} \sum_{k=1}^{n_{el}} M_{2k}
\end{split}
\end{equation}
Like $M_{1k}$, $M_{2k}$ denotes the big sum associated with second term of $I_3^2$.
Now we are going to derive bounds for the third term of $I_3$ as follows:

\begin{equation}
\begin{split}
-I_3^3 & = -\tau_2' \sum_{k=1}^{n_{el}}(\bigtriangledown \cdot (E^{I,n,\theta}_{\textbf{u}}+E_{\textbf{u}}^{A,n,\theta}), \bigtriangledown \cdot E_{\textbf{u}}^{A,n,\theta} )_{\Omega_k} \\
& = -\tau_2 \sum_{k=1}^{n_{el}}(\bigtriangledown \cdot E^{I,n,\theta}_{\textbf{u}}, \bigtriangledown \cdot E_{\textbf{u}}^{A,n,\theta} )_{\Omega_k} - \tau_2' \sum_{k=1}^{n_{el}} (\bigtriangledown \cdot E^{A,n,\theta}_{\textbf{u}}, \bigtriangledown \cdot E_{\textbf{u}}^{A,n,\theta} )_{\Omega_k} \\
\end{split}
\end{equation}
\begin{equation}
\begin{split}
& = -\tau_2 \sum_{k=1}^{n_{el}} \int_{\Omega_k} (\frac{\partial E^{I,n,\theta}_{u1}}{\partial x} \frac{\partial E^{A,n,\theta}_{u1}}{\partial x}+ \frac{\partial E^{I,n,\theta}_{u2}}{\partial y} \frac{\partial E^{A,n,\theta}_{u2}}{\partial y} + \frac{\partial E^{I,n,\theta}_{u1}}{\partial x} \frac{\partial E^{A,n,\theta}_{u2}}{\partial y}+ \\
& \quad \frac{\partial E^{A,n,\theta}_{u1}}{\partial x} \frac{\partial E^{I,n,\theta}_{u2}}{\partial y}+
2 \frac{\partial E^{A,n,\theta}_{u1}}{\partial x} \frac{\partial E^{A,n,\theta}_{u2}}{\partial y}+ (\frac{\partial E^{A,n,\theta}_{u1}}{\partial x})^2 + (\frac{\partial E^{A,n,\theta}_{u2}}{\partial y})^2)    \\
& \leq \mid \tau_2 \mid \sum_{k=1}^{n_{el}} (\|\frac{\partial E^{I,n,\theta}_{u1}}{\partial x}\|_k \|\frac{\partial E^{A,n,\theta}_{u1}}{\partial x}\|_k+ \|\frac{\partial E^{I,n,\theta}_{u2}}{\partial y}\|_k \|\frac{\partial E^{A,n,\theta}_{u2}}{\partial y}\|_k + \|\frac{\partial E^{A,n,\theta}_{u1}}{\partial x}\|_k^2 + \\
& \quad \|\frac{\partial E^{I,n,\theta}_{u1}}{\partial x}\|_k \|\frac{\partial E^{A,n,\theta}_{u2}}{\partial y} \|_k+ \|\frac{\partial E^{A,n,\theta}_{u1}}{\partial x} \|_k \|\frac{\partial E^{I,n,\theta}_{u2}}{\partial y}\|_k + 2\|\frac{\partial E^{A,n,\theta}_{u1}}{\partial x} \|_k \|\frac{\partial E^{A,n,\theta}_{u2}}{\partial y} \|_k + \\
& \quad \|\frac{\partial E^{A,n,\theta}_{u2}}{\partial y}\|_k^2)\\
& \leq \mid \tau_2 \mid \sum_{k=1}^{n_{el}} ((B_{4k}+ B_{5k}')(\|\frac{\partial E^{I,n,\theta}_{u1}}{\partial x}\|_k+ \|\frac{\partial E^{I,n,\theta}_{u2}}{\partial y}\|_k)+ B_{4k}^2+2B_{4k} B_{5k}' +B_{5k}'^2)\\
& \leq \mid \tau_2 \mid \sum_{k=1}^{n_{el}}( B_{4k} +B_{5k}')^2 + \mid \tau_2 \mid ( \sum_{k=1}^{n_{el}} (B_{4k}+B_{5k}')) (\mid E^{I,n,\theta}_{u1} \mid_1 + \mid E^{I,n,\theta}_{u2} \mid_1) \\
& \leq C_{\tau_2} \sum_{k=1}^{n_{el}}( B_{4k} +B_{5k}')^2+ C_{\tau_2} ( \sum_{k=1}^{n_{el}}( B_{4k}+B_{5k}'))C h \{(\frac{1+\theta}{2} \mid u_1^{n+1} \mid_2 + \frac{1-\theta}{2} \mid u_1^{n} \mid_2 )+ \\
& \quad  (\frac{1+\theta}{2} \mid u_2^{n+1} \mid_2 + \frac{1-\theta}{2} \mid u_2^{n} \mid_2 )\}
\end{split}
\end{equation}
where the constants $B_{4k},B_{5k},B_{4k}'$ and $B_{5k}'$ are bounds on $ \frac{\partial E^{A,n,\theta}_{u1}}{\partial x} , \frac{\partial E^{A,n,\theta}_{u1}}{\partial y}, \frac{\partial E^{A,n,\theta}_{u2}}{\partial x} $ and $\frac{\partial E^{A,n,\theta}_{u2}}{\partial y}$ respectively on each element sub domain and $C_{\tau_2}$ is the maximum numerical value for $\tau_2$ over $\Omega$. Now we will focus on the fourth term of $I_3$. We will divide $I_3^4$ into three parts $P_1,P_2$ and $P_3$ and then calculate bounds for each of them separately. 

\begin{equation}
\begin{split}
-I_3^4 &= - \sum_{k=1}^{n_{el}} \tau_3'( \partial_t(E^{I,n}_{c}+E_{c}^{A,n}) - \bigtriangledown \cdot \tilde{\bigtriangledown}(E^{I,n,\theta}_{c}+E_{c}^{A,n,\theta}) + \textbf{u} \cdot \bigtriangledown (E^{I,n,\theta}_{c}+E_{c}^{A,n,\theta})\\
& \quad + \alpha (E^{I,n,\theta}_{c}+E_{c}^{A,n,\theta}),\bigtriangledown \cdot \tilde{\bigtriangledown} E_{c}^{A,n,\theta} +\textbf{u} \cdot \bigtriangledown E_{c}^{A,n,\theta} - \alpha E_{c}^{A,n,\theta} )_{\Omega_k} \\
& = \sum_{k=1}^{n_{el}} \tau_3'( \partial_t(E^{I,n}_{c}+E_{c}^{A,n}),-\bigtriangledown \cdot \tilde{\bigtriangledown} E_{c}^{A,n,\theta} -\textbf{u} \cdot \bigtriangledown E_{c}^{A,n,\theta} + \alpha E_{c}^{A,n,\theta} )_{\Omega_k} + \sum_{k=1}^{n_{el}} \tau_3' \\
& \quad  (\bigtriangledown \cdot \tilde{\bigtriangledown}E^{I,n,\theta}_{c} - \textbf{u} \cdot \bigtriangledown E^{I,n,\theta}_{c}- \alpha E^{I,n,\theta}_{c},\bigtriangledown \cdot \tilde{\bigtriangledown} E_{c}^{A,n,\theta} +\textbf{u} \cdot \bigtriangledown E_{c}^{A,n,\theta} -  \alpha E_{c}^{A,n,\theta} )_{\Omega_k} + \\
& \quad   \sum_{k=1}^{n_{el}} \tau_3' (\bigtriangledown \cdot \tilde{\bigtriangledown}E^{A,n,\theta}_{c} - \textbf{u} \cdot \bigtriangledown E^{A,n,\theta}_{c}- \alpha E^{A,n,\theta}_{c},\bigtriangledown \cdot \tilde{\bigtriangledown} E_{c}^{A,n,\theta}+ \textbf{u} \cdot \bigtriangledown E_{c}^{A,n,\theta} -  \\
& \quad \alpha E_{c}^{A,n,\theta} )_{\Omega_k}\\
& = P_1+ P_2 + P_3
\end{split}
\end{equation}
Let us start with $P_1$
\begin{equation}
\begin{split}
P_1 & = \sum_{k=1}^{n_{el}} \tau_3'( \partial_t(E^{I,n}_{c}+E_{c}^{A,n}),-\bigtriangledown \cdot \tilde{\bigtriangledown} E_{c}^{A,n,\theta} -\textbf{u} \cdot \bigtriangledown E_{c}^{A,n,\theta} + \alpha E_{c}^{A,n,\theta} )_{\Omega_k} \\
& = \sum_{k=1}^{n_{el}} \tau_3'\alpha( \partial_t E^{I,n}_{c}+ \partial_t E_{c}^{A,n}, E_{c}^{A,n,\theta})_{\Omega_k}- \sum_{k=1}^{n_{el}} \tau_3'(\partial_t E^{I,n}_{c}+ \partial_t E_{c}^{A,n},\\
& \quad \bigtriangledown \cdot \tilde{\bigtriangledown} E_{c}^{A,n,\theta} +\textbf{u} \cdot \bigtriangledown E_{c}^{A,n,\theta})_{\Omega_k}\\
&= \sum_{k=1}^{n_{el}} \tau_3'\alpha(\partial_t E_{c}^{A,n}, E_{c}^{A,n,\theta})_{\Omega_k} -
\sum_{k=1}^{n_{el}} \tau_3'(\partial_t E^{I,n,\theta}_{c}+ \partial_t E_{c}^{A,n,\theta}, D_1 \frac{\partial^2 E_{c}^{A,n,\theta}}{\partial x^2}+\\
& \quad D_2 \frac{\partial^2 E_{c}^{A,n,\theta}}{\partial y^2} + (u_1+ \frac{\partial D_1}{\partial x})\frac{\partial E_{c}^{A,n,\theta} }{\partial x} + (u_2+ \frac{\partial D_2}{\partial y})\frac{\partial E_{c}^{A,n,\theta} }{\partial y})_{\Omega_k}\\
& = \alpha\tau_3' \sum_{k=1}^{n_{el}} \int_{\Omega_k} \frac{E^{A,n+1}_{c}-E^{A,n}_{c}}{dt}E_{c}^{A,n,\theta}- \tau_3' \sum_{k=1}^{n_{el}} \int_{\Omega_k} \frac{E^{I,n+1}_{c}-E^{I,n}_{c}}{dt} (D_1 \frac{\partial^2 E_{c}^{A,n,\theta}}{\partial x^2}+\\
& \quad D_2 \frac{\partial^2 E_{c}^{A,n,\theta}}{\partial y^2} + (u_1+ \frac{\partial D_1}{\partial x})\frac{\partial E_{c}^{A,n,\theta} }{\partial x} + (u_2+ \frac{\partial D_2}{\partial y})\frac{\partial E_{c}^{A,n,\theta} }{\partial y})-\\
& \quad \tau_3' \sum_{k=1}^{n_{el}} \int_{\Omega_k} \frac{E^{A,n+1}_{u1}-E^{A,n}_{u1}}{dt} (D_1 \frac{\partial^2 E_{c}^{A,n,\theta}}{\partial x^2}+ D_2 \frac{\partial^2 E_{c}^{A,n,\theta}}{\partial y^2} + (u_1+ \frac{\partial D_1}{\partial x})\frac{\partial E_{c}^{A,n,\theta} }{\partial x} + \\
& \quad (u_2+ \frac{\partial D_2}{\partial y})\frac{\partial E_{c}^{A,n,\theta} }{\partial y}) \\
& \leq \frac{\alpha \mid  \tau_3'\mid }{dt} \sum_{k=1}^{n_{el}} \int_{\Omega_k}(E^{A,n+1}_{c}-E^{A,n}_{c}) E_{c}^{A,n,\theta}  + \frac{\mid  \tau_3'\mid}{dt} \sum_{k=1}^{n_{el}} \int_{\Omega_k}(E^{I,n+1}_{c}-E^{I,n}_{c}) \\
& \quad (D_{1m} \frac{\partial^2 E_{c}^{A,n,\theta}}{\partial x^2}+D_{2m}\frac{\partial^2 E_{c}^{A,n,\theta}}{\partial y^2} + D_{u1} \frac{\partial E_{c}^{A,n,\theta} }{\partial x}+ D_{u2} \frac{\partial E_{c}^{A,n,\theta} }{\partial y})+ \frac{\mid  \tau_3'\mid}{dt} \sum_{k=1}^{n_{el}}\\
& \quad  \int_{\Omega_k}(E^{A,n+1}_{c}-E^{A,n}_{c})(D_{1m}  \frac{\partial^2 E_{c}^{A,n,\theta}}{\partial x^2} + D_{2m}\frac{\partial^2 E_{c}^{A,n,\theta}}{\partial y^2} + D_{u1} \frac{\partial E_{c}^{A,n,\theta} }{\partial x}+  D_{u2} \frac{\partial E_{c}^{A,n,\theta} }{\partial y})\\
& \leq \frac{\alpha \mid  \tau_3'\mid }{dt} (\sum_{k=1}^{n_{el}} B_{6k}) (\|E^{A,n+1}_{c}\|^2-\|E^{A,n}_{c}\|^2)+\frac{\mid  \tau_3'\mid}{dt} \{\sum_{k=1}^{n_{el}} (D_{1m} B_{7k}+D_{2m}B_{7k}'+D_{u1} \\
& \quad B_{8k}+ D_{u2} B_{8k}')\}(\|E^{I,n+1}_{c}\|+\|E^{I,n}_{c}\|) + \frac{\mid  \tau_3'\mid}{dt} \{\sum_{k=1}^{n_{el}} (D_{1m} B_{7k}+D_{2m}B_{7k}'+D_{u1}B_{8k}+\\
& \quad D_{u2} B_{8k}')\}(\|E^{A,n+1}_{c}\|^2-\|E^{A,n}_{c}\|^2)\\
& \leq \frac{C_{\tau_3} T}{dt(T_0-C_{\tau_3})}\{ \sum_{k=1}^{n_{el}}(\alpha B_{6k} +D_{B_{1k}})\}(\|E^{A,n+1}_{c}\|^2-\|E^{A,n}_{c}\|^2)+\frac{C_{\tau_3}Ch^2}{(T_0-C_{\tau_3})} \{\sum_{k=1}^{n_{el}} D_{B_{1k}} \} \\
& \quad (\mid c^{n+1} \mid_2 + \mid c^n \mid_2)
\end{split}
\end{equation}
where the constants $B_{6k},B_{7k},B_{7k}',B_{8k}$ and $B_{8k}'$ are upper bounds on $E^{A,n,\theta}_{c},\frac{\partial^2 E^{A,n,\theta}_{c}}{\partial x^2}, \frac{\partial^2 E^{A,n,\theta}_{c}}{\partial y^2} ,\\ \frac{\partial E_{c}^{A,n,\theta}}{\partial x}$ and $\frac{\partial E_{c}^{A,n,\theta}}{\partial y}$ respectively on each element sub domain and $D_{1m},D_{2m}, {D}_{u1}, {D}_{u2}$ are maximum of the functions $D_1, D_2, (\frac{\partial D_1}{\partial x}+ u_1),(\frac{\partial D_2}{\partial y}+ u_2) $ respectively over $\Omega$. $C_{\tau_3}$ and $T_0$ are maximum bound of $\tau_3$ and minimum bound of time step $dt$ respectively. At the last line new notation $D_{B_{1k}}$ represents the big sum.
\begin{equation}
\begin{split}
P_2 & = \sum_{k=1}^{n_{el}} \tau_3' (\bigtriangledown \cdot \tilde{\bigtriangledown}E^{I,n,\theta}_{c} - \textbf{u} \cdot \bigtriangledown E^{I,n,\theta}_{c}- \alpha E^{I,n,\theta}_{c},\bigtriangledown \cdot \tilde{\bigtriangledown} E_{c}^{A,n,\theta} +\textbf{u} \cdot \bigtriangledown E_{c}^{A,n,\theta} - \alpha E_{c}^{A,n,\theta} )_{\Omega_k} \\
& = \sum_{k=1}^{n_{el}} \tau_3' (D_1 \frac{\partial^2 E^{I,n,\theta}_{c}}{\partial x^2} + D_2 \frac{\partial^2 E^{I,n,\theta}_{c}}{\partial y^2} + (\frac{\partial D_1}{\partial x}-u_1)\frac{\partial E^{I,n,\theta}_{c}}{\partial x}+ (\frac{\partial D_2}{\partial y}-u_2)\\
& \quad \frac{\partial E^{I,n,\theta}_{c}}{\partial y}-\alpha E^{I,n,\theta}_{c}, D_1 \frac{\partial^2 E^{A,n,\theta}_{c}}{\partial x^2} + D_2 \frac{\partial^2 E^{A,n,\theta}_{c}}{\partial y^2} + (\frac{\partial D_1}{\partial x}+u_1)\frac{\partial E^{A,n,\theta}_{c}}{\partial x}+ \\
& \quad (\frac{\partial D_2}{\partial y}+u_2)\frac{\partial E^{A,n,\theta}_{c}}{\partial y}-\alpha E^{A,n,\theta}_{c} )_{\Omega_k}\\
& = \sum_{k=1}^{n_{el}} \tau_3'(D_1^2 \frac{\partial^2 E^{I,n,\theta}_{c}}{\partial x^2} \frac{\partial^2 E^{A,n,\theta}_{c}}{\partial x^2}+ D_1 D_2 \frac{\partial^2 E^{I,n,\theta}_{c}}{\partial y^2}\frac{\partial^2 E^{A,n,\theta}_{c}}{\partial x^2}+D_1(\frac{\partial D_1}{\partial x}-u_1)\frac{\partial E^{I,n,\theta}_{c}}{\partial x}\\
& \quad \frac{\partial^2 E^{A,n,\theta}_{c}}{\partial x^2} + D_1(\frac{\partial D_2}{\partial x}-u_2)\frac{\partial E^{I,n,\theta}_{c}}{\partial y} \frac{\partial^2 E^{A,n,\theta}_{c}}{\partial x^2}-\alpha D_1 E^{I,n,\theta}_{c}\frac{\partial^2 E^{A,n,\theta}_{c}}{\partial x^2}+ D_1 D_2 \frac{\partial^2 E^{I,n,\theta}_{c}}{\partial x^2} \\
& \quad \frac{\partial^2 E^{A,n,\theta}_{c}}{\partial y^2}+D_2^2 \frac{\partial^2 E^{I,n,\theta}_{c}}{\partial y^2} \frac{\partial^2 E^{A,n,\theta}_{c}}{\partial y^2}+D_2(\frac{\partial D_1}{\partial x}-u_1)\frac{\partial E^{I,n,\theta}_{c}}{\partial x}\frac{\partial^2 E^{A,n,\theta}_{c}}{\partial y^2}+(\frac{\partial D_2}{\partial x}-u_2)\\
& \quad  D_2 \frac{\partial E^{I,n,\theta}_{c}}{\partial y} \frac{\partial^2 E^{A,n,\theta}_{c}}{\partial y^2}-\alpha D_2 E^{I,n,\theta}_{c} \frac{\partial^2 E^{A,n,\theta}_{c}}{\partial y^2} + D_1(\frac{\partial D_1}{\partial x}+u_1)\frac{\partial E^{A,n,\theta}_{c}}{\partial x} \frac{\partial^2 E^{I,n,\theta}_{c}}{\partial x^2}+ D_2\\
& \quad (\frac{\partial D_1}{\partial x}+u_1)\frac{\partial E^{A,n,\theta}_{c}}{\partial x} \frac{\partial^2 E^{I,n,\theta}_{c}}{\partial y^2}+(\frac{\partial D_1}{\partial x}^2-u_1^2)\frac{\partial E^{I,n,\theta}_{c}}{\partial x} \frac{\partial E^{A,n,\theta}_{c}}{\partial x}+ (\frac{\partial D_1}{\partial x}+u_1) (\frac{\partial D_2}{\partial x}-u_2)\\
& \quad \frac{\partial E^{A,n,\theta}_{c}}{\partial x} \frac{\partial E^{I,n,\theta}_{c}}{\partial y} -\alpha (\frac{\partial D_1}{\partial x}+u_1) E^{I,n,\theta}_{c} \frac{\partial E^{A,n,\theta}_{c}}{\partial x} + D_1 (\frac{\partial D_2}{\partial x}+u_2)\frac{\partial E^{A,n,\theta}_{c}}{\partial y} \frac{\partial^2 E^{I,n,\theta}_{c}}{\partial x^2} +  D_2 \\
& \quad  (\frac{\partial D_2}{\partial y}+u_2)\frac{\partial E^{A,n,\theta}_{c}}{\partial y} \frac{\partial^2 E^{I,n,\theta}_{c}}{\partial y^2} + (\frac{\partial D_1}{\partial x}-u_1) (\frac{\partial D_2}{\partial y}+u_2)\frac{\partial E^{I,n,\theta}_{c}}{\partial x} \frac{\partial E^{A,n,\theta}_{c}}{\partial y}+ (\frac{\partial D_2}{\partial y}^2-u_2^2)\\
& \quad \frac{\partial E^{I,n,\theta}_{c}}{\partial y} \frac{\partial E^{A,n,\theta}_{c}}{\partial y}- \alpha D_1 \frac{\partial^2 E^{I,n,\theta}_{c}}{\partial x^2} E^{A,n,\theta}_{c}- \alpha D_2 \frac{\partial^2 E^{I,n,\theta}_{c}}{\partial y^2} E^{A,n,\theta}_{c}- \alpha (\frac{\partial D_1}{\partial x}-u_1)\frac{\partial E^{I,n,\theta}_{c}}{\partial x} \\
& \quad E^{A,n,\theta}_{c} - \alpha (\frac{\partial D_2}{\partial x}-u_2)\frac{\partial E^{I,n,\theta}_{c}}{\partial y} E^{A,n,\theta}_{c} + \alpha^2 E^{I,n,\theta}_{c} E^{A,n,\theta}_{c} - \alpha (\frac{\partial D_2}{\partial y}+u_2)E^{I,n,\theta}_{c} \frac{\partial E^{A,n,\theta}_{c}}{\partial y})_{\Omega_k} \\
& \leq \sum_{k=1}^{n_{el}} \mid \tau_3' \mid(D_{1m}^2 B_{7k} \|\frac{\partial^2 E^{I,n,\theta}_{c}}{\partial x^2}\|_k + D_{1m}D_{2m} B_{7k}' \|\frac{\partial^2 E^{I,n,\theta}_{c}}{\partial y^2}\|_k + D_{1m} \bar{D}_{u1} B_{7k} \|\frac{\partial E^{I,n,\theta}_{c}}{\partial x}\|_k \\
& \quad D_{1m}\bar{D}_{u2} B_{7k} \|\frac{\partial E^{I,n,\theta}_{c}}{\partial x}\|_k + \alpha D_{1m} B_{7k} \|E^{I,n,\theta}_{c}\|_k + D_{1m}D_{2m} B_{7k}' \|\frac{\partial^2 E^{I,n,\theta}_{c}}{\partial x^2}\|_k + D_{2m}^2 B_{7k}' \\
\end{split}
\end{equation}
\begin{equation}
\begin{split}
& \quad \|\frac{\partial^2 E^{I,n,\theta}_{c}}{\partial y^2}\|_k + D_{2m} \bar{D}_{u1} B_{7k}' \|\frac{\partial^2 E^{I,n,\theta}_{c}}{\partial y^2}\|_k + D_{2m} \bar{D}_{u2} B_{7k}' \|\frac{\partial E^{I,n,\theta}_{c}}{\partial y}\|_k + \alpha D_{2m} B_{7k}' \|E^{I,n,\theta}_{c}\|_k +\\
& \quad D_{1m} D_{u1} B_{8k} \|\frac{\partial^2 E^{I,n,\theta}_{c}}{\partial x^2}\|_k + D_{2m} D_{u1} B_{8k} \|\frac{\partial^2 E^{I,n,\theta}_{c}}{\partial y^2}\|_k + \bar{D}_{u1} \bar{D}_{u2} B_{8k} \|\frac{\partial E^{I,n,\theta}_{c}}{\partial x}\|_k + D_{u1} \bar{D}_{u2} \\
& \quad B_{8k} \|\frac{\partial E^{I,n,\theta}_{c}}{\partial y}\|_k + \alpha D_{u1}  B_{8k} \|E^{I,n,\theta}_{c}\|_k + D_{1m} D_{u2} B_{8k}'  \|\frac{\partial^2 E^{I,n,\theta}_{c}}{\partial x^2}\|_k + D_{2m} D_{u2} B_{8k}'  \|\frac{\partial^2 E^{I,n,\theta}_{c}}{\partial y^2}\|_k \\
& \quad + \bar{D}_{u1} D_{u2} B_{8k}' \|\frac{\partial E^{I,n,\theta}_{c}}{\partial x}\|_k +  D_{u2} \bar{D}_{u2} B_{8k}' \|\frac{\partial E^{I,n,\theta}_{c}}{\partial y}\|_k + \alpha D_{1m} B_{6k} \|\frac{\partial^2 E^{I,n,\theta}_{c}}{\partial x^2}\|_k + \alpha  D_{2m} B_{6k} \\
& \quad \|\frac{\partial^2 E^{I,n,\theta}_{c}}{\partial y^2}\|_k + \alpha \bar{D}_{u1} B_{6k} \|\frac{\partial E^{I,n,\theta}_{c}}{\partial x}\|_k +
\alpha \bar{D}_{u2} B_{6k} \|\frac{\partial E^{I,n,\theta}_{c}}{\partial y}\|_k + \alpha^2 B_{6k} \| E^{I,n,\theta}_{c}\|_k + \alpha D_{u2} B_{8k}'\\
& \quad \|E^{I,n,\theta}_c\|_k) \\
& \leq \mid \tau_3' \mid \{ \sum_{k=1}^{n_{el}} (D_{1m}^2 B_{7k} +2D_{1m}D_{2m} B_{7k}' +D_{2m}^2 B_{7k}' +D_{2m} \bar{D}_{u1} B_{7k}'+ D_{1m} D_{u1} B_{8k}+\\
& \quad D_{2m} D_{u1} B_{8k} +D_{1m} D_{u2} B_{8k}' +D_{2m} D_{u2} B_{8k}' + \alpha D_{1m} B_{6k} + \alpha D_{2m} B_{6k})\} \mid E^{I,n,\theta}_c \mid_2 +  \mid \tau_3' \mid \\
& \quad \{ \sum_{k=1}^{n_{el}} (D_{1m} \bar{D}_{u1} B_{7k}+ D_{1m} \bar{D}_{u2} B_{7k}+D_{2m} \bar{D}_{u2} B_{7k}'+ \bar{D}_{u1} \bar{D}_{u2} B_{8k} + D_{ul} \bar{D}_{u2} B_{8k} + D_{u2} \bar{D}_{u1} \\
& \quad B_{8k}'+D_{u2} \bar{D}_{u2} B_{8k}' + \alpha  \bar{D}_{u1} B_{6k}+ \alpha  \bar{D}_{u2} B_{6k})\} \mid E^{I,n,\theta}_c \mid_1 + \mid \tau_3' \mid \{ \sum_{k=1}^{n_{el}} (\alpha D_{1m} B_{7k} + \alpha \\
& \quad D_{2m} B_{7k}' + \alpha D_{u1} B_{8k} + \alpha D_{u2} B_{8k}' + \alpha^2 B_{6k})\} \|E^{I,n,\theta}_c\| \\
& \leq \mid \tau_3' \mid \{ \sum_{k=1}^{n_{el}} (D_{1m}^2 B_{7k} +2D_{1m}D_{2m} B_{7k}' +D_{2l}^2 B_{7k}' +D_{2m} \bar{D}_{u1} B_{7k}'+ D_{1m} D_{u1} B_{8k}+\\
& \quad D_{2m} D_{u1} B_{8k} +D_{1m} D_{u2} B_{8k}' +D_{2m} D_{u2} B_{8k}' + \alpha D_{1m} B_{6k} + \alpha D_{2m} B_{6k})\} C(\frac{1+\theta}{2}\mid c^{n+1}\mid_2 + \\
& \quad \frac{1-\theta}{2} \mid c^n \mid_2) + \mid \tau_3' \mid \{ \sum_{k=1}^{n_{el}} (D_{1m} \bar{D}_{u1} B_{7k}+ D_{1m} \bar{D}_{u2} B_{7k}+D_{2m} \bar{D}_{u2} B_{7k}'+ \bar{D}_{u1} \bar{D}_{u2} B_{8k} +  \\
& \quad D_{u1}\bar{D}_{u2} B_{8k} + D_{u2} \bar{D}_{u1} B_{8k}'+D_{u2} \bar{D}_{u2} B_{8k}' + \alpha  \bar{D}_{u1} B_{6k}+ \alpha  \bar{D}_{u2} B_{6k})\} C h (\frac{1+\theta}{2}\mid c^{n+1}\mid_1+ \\
& \quad \frac{1-\theta}{2} \mid c^n \mid_1)+  \mid \tau_3' \mid \{ \sum_{k=1}^{n_{el}} (\alpha D_{1m} B_{7k} + \alpha D_{2m} B_{7k}' + \alpha D_{u1} B_{8k} + \alpha D_{u2} B_{8k}' + \alpha^2 B_{6k})\} C h^2 \\
& \quad (\frac{1+\theta}{2}\| c^{n+1}\|+\frac{1-\theta}{2} \| c^n \|) \\
& \leq \frac{C_{\tau_3}T}{(T_0-C_{\tau_3})} C \{ (\sum_{k=1}^{n_{el}} D_{B_{2k}})(\frac{1+\theta}{2}\mid c^{n+1}\mid_2 +\frac{1-\theta}{2} \mid c^n \mid_2)+h (\sum_{k=1}^{n_{el}} D_{B_{3k}})(\frac{1+\theta}{2}\mid c^{n+1}\mid_1  \\
& \quad +\frac{1-\theta}{2} \mid c^n \mid_1) + h^2 (\sum_{k=1}^{n_{el}} D_{B_{4k}})(\frac{1+\theta}{2}\| c^{n+1}\| +\frac{1-\theta}{2} \| c^n \|) \}
\end{split}
\end{equation}
where $D_{B_{2k}}, D_{B_{3k}}$ and $D_{B_{4k}}$ are denoting respectively the summations in which the notations $ \bar{D}_{u1}, \bar{D}_{u2}$ are the maximum of the functions $ (\frac{\partial D_1}{\partial x}- u_1),(\frac{\partial D_2}{\partial y}- u_2) $ respectively over $\Omega$.
The next term is similar to the previous one. Therefore the simplification will be same as above. Hence skipping the calculations we directly put the result as follows: 

\begin{equation}
\begin{split}
P_3 & = \sum_{k=1}^{n_{el}} \tau_3' (\bigtriangledown \cdot \tilde{\bigtriangledown}E^{A,n,\theta}_{c} - \textbf{u} \cdot \bigtriangledown E^{A,n,\theta}_{c}- \alpha E^{A,n,\theta}_{c},\bigtriangledown \cdot \tilde{\bigtriangledown} E_{c}^{A,n,\theta} + \textbf{u} \cdot \bigtriangledown E_{c}^{A,n,\theta} \\
& \quad -  \alpha E_{c}^{A,n,\theta} )_{\Omega_k} \\
& \leq \mid \tau_3' \mid \{ \sum_{k=1}^{n_{el}}(D_{1m}^2 B_{7k}^2 + 2 D_{1m} D_{2m} B_{7k} B_{7k}' + D_{1m} \bar{D}_{u1} B_{8k} B_{7k} + D_{1m} \bar{D}_{u2} B_{8k}' B_{7k} +  \\
& \quad\alpha D_{1m} B_{7k} B_{6k} + D_{2m}^2 B_{7k}'^2 + D_{2m} \bar{D}_{u1} B_{8k} B_{7k}' + D_{2m} \bar{D}_{u2} B_{8k}' B_{7k}' + \alpha D_{2m} B_{6k} B_{7k}' + \\
& \quad  D_{1m} D_{u1} B_{8k} B_{7k}+ D_{2m} D_{u1} B_{8k} B_{7k}' + D_{u1} \bar{D}_{u1} B_{7k}^2 + D_{u1} \bar{D}_{u2} B_{8k} B_{8k}'+ \alpha D_{u1}B_{6k}  \\
& \quad  B_{8k}+ D_{1m} D_{u2} B_{8k}' B_{7k} + D_{2m} D_{u2} B_{8k}' B_{7k}' + D_{u2} \bar{D}_{u1} B_{8k} B_{8k}' + D_{u2} \bar{D}_{u2} B_{8k}'^2 + \alpha \\
& \quad ( D_{u2}  B_{8k}' B_{6k} + D_{1m} B_{6k} B_{7k} +D_{2m} B_{6k} B_{7k}' + \bar{D}_{u1} B_{8k} B_{6k} +  \bar{D}_{u2} B_{8k}' B_{6k} + \alpha B_{6k}^2)) \} \\
& \leq \frac{C_{\tau_3} T}{(T_0-C_{\tau_3})} \sum_{k=1}^{n_{el}} D_{B_{5k}} 
\end{split}
\end{equation}
where $D_{B_{5k}}$ is a notation denoting the big sum of the constants.\vspace{1mm}\\
Now combining all the bounds obtained for $P_1,P_2,P_3$ and putting them into the expression of $I^4_3$ we will have

\begin{equation}
\begin{split}
-I^4_3 & \leq \frac{C_{\tau_3} T}{dt(T_0-C_{\tau_3})}\{ \sum_{k=1}^{n_{el}}(\alpha B_{6k} + D_{B_{1k}})\}(\|E^{A,n+1}_{c}\|^2-\|E^{A,n}_{c}\|^2)+ \frac{C_{\tau_3}Ch^2 }{(T_0-C_{\tau_3})}   \{\sum_{k=1}^{n_{el}} D_{B_{1k}}\} \\
& \quad (\mid c^{n+1} \mid_2 +  \mid c^n \mid_2)+ \frac{C_{\tau_3} T C}{(T_0-C_{\tau_3})}  \{ (\sum_{k=1}^{n_{el}} D_{B_{2k}})(\frac{1+\theta}{2}\mid c^{n+1}\mid_2 +\frac{1-\theta}{2} \mid c^n \mid_2)+ h  \\
& \quad  (\sum_{k=1}^{n_{el}} D_{B_{3k}})(\frac{1+\theta}{2}\mid c^{n+1}\mid_1 +\frac{1-\theta}{2}  \mid c^n \mid_1)+ h^2 (\sum_{k=1}^{n_{el}} D_{B_{4k}})(\frac{1+\theta}{2} \| c^{n+1}\|+ \frac{1-\theta}{2} \| c^n \|) \}  \\
& \quad   + \frac{C_{\tau_3} T}{(T_0-C_{\tau_3})} \sum_{k=1}^{n_{el}} D_{B_{5k}}
\end{split}
\end{equation}
Finally here the process of finding bound for each term of $I_3$ is completed. Now we will focus on finding bounds for the terms of $I_4$. Before going to derivation let us see the term $d_4$ explicitly. \vspace{1mm} \\
\begin{equation}
\begin{split}
d_4 &= \sum_{i=1}^{n+1}(\frac{1}{dt}\tau_3')^i(\partial_t(c^n-c_{h}^n) - \bigtriangledown \cdot \tilde{\bigtriangledown} (c^{n,\theta}-c_{h}^{n,\theta}) + \textbf{u}^n \cdot \bigtriangledown (c^{n,\theta}-c_{h}^{n,\theta}) + \alpha (c^{n,\theta}-c_{h}^{n,\theta})) \\
& = \sum_{i=1}^{n+1}(\frac{1}{dt}\tau_3')^i (\partial_t (E^{I,n}_{c}+ E_{c}^{A,n})- \bigtriangledown \cdot \tilde{\bigtriangledown} (E^{I,n,\theta}_{c}+ E_{c}^{A,n,\theta}) + \textbf{u}^n \cdot \bigtriangledown ((E^{I,n,\theta}_{c}+ E_{c}^{A,n,\theta}) +\\
& \quad +  \alpha (E^{I,n,\theta}_{c}+ E_{c}^{A,n,\theta})) \\
& \leq \sum_{i=1}^{\infty}(\frac{1}{dt} \tau_3' )^i (\partial_t (E^{I,n}_{c}+ E_{c}^{A,n})- \bigtriangledown \cdot \tilde{\bigtriangledown} (E^{I,n,\theta}_{c}+ E_{c}^{A,n,\theta}) + \textbf{u}^n \cdot \bigtriangledown (E^{I,n,\theta}_{c}+ E_{c}^{A,n,\theta}) \\
& \quad +  \alpha (E^{I,n,\theta}_{c}+ E_{c}^{A,n,\theta})) \\
& = \frac{\tau_3' }{(dt- \tau_3' )}(\partial_t E^{I,n}_{c}+\partial_t E^{A,n}_{c})-\frac{ \tau_3' }{(dt- \tau_3')}(\bigtriangledown \cdot \tilde{\bigtriangledown} E^{I,n,\theta}_{c} - \textbf{u}^n \cdot \bigtriangledown E^{I,n,\theta}_{c}-\alpha E^{I,n,\theta}_{c}) \\
& \quad -\frac{\tau_3' }{(dt- \tau_3' )}(\bigtriangledown \cdot \tilde{\bigtriangledown} E^{A,n,\theta}_{c} - \textbf{u}^n \cdot \bigtriangledown E^{A,n,\theta}_{c}-\alpha E^{A,n,\theta}_{c})
\end{split}
\end{equation} 
Since $\frac{\tau_3}{dt+ \tau_3} < 1$, which implies $\frac{\tau_3'}{dt} < 1$ and therefore the series $\sum_{i=1}^{\infty}(\frac{1}{dt} \tau_3' )^i$ converges to $\frac{\tau_3' }{(dt- \tau_3' )}$ 
\begin{equation}
\begin{split}
-I_4 & = \sum_{k=1}^{n_{el}} \tau_3' (d_4, \bigtriangledown \cdot \tilde{\bigtriangledown} E_{c}^{A,n,\theta} +\textbf{u} \cdot \bigtriangledown E_{c}^{A,n,\theta} - \alpha E_{c}^{A,n,\theta} )_{\Omega_k}\\
& \leq \frac{\tau_3'^2}{(dt- \tau_3' )}\sum_{k=1}^{n_{el}}(\partial_t E^{I,n}_{c}+\partial_t E^{A,n}_{c},\bigtriangledown \cdot \tilde{\bigtriangledown} E_{c}^{A,n,\theta} +\textbf{u} \cdot \bigtriangledown E_{c}^{A,n,\theta} - \alpha E_{c}^{A,n,\theta} )_{\Omega_k} \\
& \quad - \frac{\tau_3'^2}{(dt-\tau_3' )} \sum_{k=1}^{n_{el}}(\bigtriangledown \cdot \tilde{\bigtriangledown} E^{I,n,\theta}_{c} - \textbf{u} \cdot \bigtriangledown E^{I,n,\theta}_{c}-\alpha E^{I,n,\theta}_{c},\bigtriangledown \cdot \tilde{\bigtriangledown} E_{c}^{A,n,\theta} +  \\
& \quad \textbf{u} \cdot \bigtriangledown E_{c}^{A,n,\theta}- \alpha E_{c}^{A,n,\theta} )_{\Omega_k}  -\frac{ \tau_3'^2}{(dt- \tau_3')}\sum_{k=1}^{n_{el}}(\bigtriangledown \cdot \tilde{\bigtriangledown} E^{A,n,\theta}_{c} - \textbf{u} \cdot \bigtriangledown E^{A,n,\theta}_{c}- \\
& \quad \alpha E^{A,n,\theta}_{c}, \bigtriangledown \cdot \tilde{\bigtriangledown} E_{c}^{A,n,\theta}+ \textbf{u} \cdot \bigtriangledown E_{c}^{A,n,\theta} - \alpha E_{c}^{A,n,\theta} )_{\Omega_k} \\ 
& \leq \frac{\tau_3^2}{dt (dt+ \tau_3)}\{ \sum_{k=1}^{n_{el}}(\alpha B_{6k} +D_{B_{1k}})\} (\|E^{A,n+1}_{c}\|^2- \|E^{A,n}_{c}\|^2)+ \frac{\tau_3^2 Ch^2}{dt (dt+ \tau_3)}   \\
& \quad \{\sum_{k=1}^{n_{el}} D_{B_{1k}}\} (\mid c^{n+1} \mid_2  + \mid c^n \mid_2)+ \frac{ \tau_3}{dt} C \{ (\sum_{k=1}^{n_{el}} D_{B_{2k}})(\frac{1+\theta}{2}\mid c^{n+1}\mid_2 +\frac{1-\theta}{2}  \\
& \quad \mid c^n \mid_2)+ h (\sum_{k=1}^{n_{el}} D_{B_{2k}})(\frac{1+\theta}{2}\mid c^{n+1}\mid_1 +\frac{1-\theta}{2}  \mid c^n \mid_1) + h^2 (\sum_{k=1}^{n_{el}} D_{B_{4k}})(\frac{1+\theta}{2} \\
& \quad  \| c^{n+1}\|+ \frac{1-\theta}{2} \| c^n \|) \} + \frac{\tau_3}{ dt} \sum_{k=1}^{n_{el}} D_{B_{5k}}\\
\end{split}
\end{equation} 
\begin{equation}
\begin{split}
& \leq \frac{C_{\tau_3}^2}{dt (T_0- C_{\tau_3})}\{ \sum_{k=1}^{n_{el}}(\alpha B_{6k} +D_{B_{1k}})\} (\|E^{A,n+1}_{c}\|^2- \|E^{A,n}_{c}\|^2)+ \frac{C_{\tau_3}^2 Ch^2}{T_0 (T_0- C_{\tau_3})} \\
& \quad \{\sum_{k=1}^{n_{el}} D_{B_{1k}}\} (\mid c^{n+1} \mid_2  + \mid c^n \mid_2)+ \frac{ C_{\tau_3}}{T_0} C \{ (\sum_{k=1}^{n_{el}} D_{B_{2k}})(\frac{1+\theta}{2}\mid c^{n+1}\mid_2 +\frac{1-\theta}{2}\\
& \quad \mid c^n \mid_2)+ h (\sum_{k=1}^{n_{el}} D_{B_{3k}})(\frac{1+\theta}{2}\mid c^{n+1}\mid_1 +\frac{1-\theta}{2}  \mid c^n \mid_1) + h^2 (\sum_{k=1}^{n_{el}} D_{B_{4k}})(\frac{1+\theta}{2} \\
& \quad  \| c^{n+1}\|+ \frac{1-\theta}{2} \| c^n \|) \} + \frac{C_{\tau_3}}{ T_0} \sum_{k=1}^{n_{el}} D_{B_{5k}}\\
\end{split}
\end{equation}
This completes finding the bounds for $I_4$. \vspace{1mm} \\
Now we will find bounds for  $I_5$ and $I_6$ in similar manner as many terms of $I_5, I_6$ coincide with the terms of $I_3$ and $I_4$. \vspace{1mm}\\
\begin{equation}
\begin{split}
- I_5 & =(1-\tau_3^{-1}\tau_3') \sum_{k=1}^{n_{el}} ( \partial_t E^{I,n}_{c} + \partial_t E^{A,n}_{c} - \bigtriangledown \cdot \tilde{\bigtriangledown}(E^{I,n,\theta}_{c}+E_{c}^{A,n,\theta})+ \textbf{u}^n \cdot \bigtriangledown (E^{I,n,\theta}_{c}+ \\
& \quad E_{c}^{A,n,\theta}) + \alpha (E^{I,n,\theta}_{c}+E_{c}^{A,n,\theta}), E_{c}^{A,n,\theta} )_{\Omega_k} \\
& = (1-\tau_3^{-1}\tau_3') \sum_{k=1}^{n_{el}} (\partial_t E^{A,n}_{c}, E_{c}^{A,n,\theta} )_{\Omega_k} + (1-\tau_3^{-1}\tau_3') \sum_{k=1}^{n_{el}}(\textbf{u}^n \cdot \bigtriangledown E^{I,n,\theta}_{c}- \bigtriangledown \cdot \tilde{\bigtriangledown} E^{I,n,\theta}_{c}, \\
& \quad E_{c}^{A,n,\theta} )_{\Omega_k} + (1-\tau_3^{-1}\tau_3') \sum_{k=1}^{n_{el}} (\textbf{u}^n \cdot \bigtriangledown E^{A,n,\theta}_{c}- \bigtriangledown \cdot \tilde{\bigtriangledown} E^{A,n,\theta}_{c} + \alpha E^{A,n,\theta}_{c} ,E^{A,n,\theta}_{c})_{\Omega_k} \\
& = Q_1 + Q_2 + Q_3
\end{split}
\end{equation}
where 
\begin{equation}
\begin{split}
Q_1 & = (1-\tau_3^{-1}\tau_3')\sum_{k=1}^{n_{el}} (\partial_t E_{c}^{A,n}, E_{c}^{A,n,\theta})_{\Omega_k} \\
&= \frac{\tau_3}{dt+\tau_3} \sum_{k=1}^{n_{el}} (\partial_t E_{c}^{A,n}, E_{c}^{A,n,\theta})_{\Omega_k} \\
& \leq \frac{C_{\tau_3}}{dt(T_0-C_{\tau_3})} (\sum_{k=1}^{n_{el}} B_{6k})(\|E_{c}^{A,n+1}\|^2 - \|E_{c}^{A,n}\|^2)
\end{split}
\end{equation}
\begin{equation}
\begin{split}
Q_2 & = (1-\tau_3^{-1}\tau_3')\sum_{k=1}^{n_{el}}(\textbf{u}^n \cdot \bigtriangledown E^{I,n,\theta}_{c}- \bigtriangledown \cdot \tilde{\bigtriangledown} E^{I,n,\theta}_{c}, E_{c}^{A,n,\theta} )_{\Omega_k} \\
& \leq \frac{C_{\tau_3}}{(T_0-C_{\tau_3})} \{\sum_{k=1}^{n_{el}} (\bar{D}_{u1} + \bar{D}_{u2} )B_{6k}\}  \mid E^{I,n,\theta}_{c} \mid_1\\
& \leq \frac{C_{\tau_3} Ch}{(T_0-C_{\tau_3})} \{\sum_{k=1}^{n_{el}} (\bar{D}_{u1} + \bar{D}_{u2} )B_{6k}\} (\frac{1+\theta}{2} \mid c^{n+1} \mid_1 + \frac{1-\theta}{2} \mid c^n \mid_1)
\end{split}
\end{equation}
and 
\begin{equation}
\begin{split}
Q_3 & = (1-\tau_3^{-1}\tau_3') \sum_{k=1}^{n_{el}} (\textbf{u}^n \cdot \bigtriangledown E^{A,n,\theta}_{c}- \bigtriangledown \cdot \tilde{\bigtriangledown} E^{A,n,\theta}_{c} + \alpha E^{A,n,\theta}_{c} ,E^{A,n,\theta}_{c})_{\Omega_k} \\
& \leq \frac{C_{\tau_3}}{(T_0-C_{\tau_3})} \sum_{k=1}^{n_{el}}(D_{1m}B_{7k}+D_{2m}B_{7k}'+\bar{D}_{u1}B_{8k}+\bar{D}_{u2}B_{8k}'+\alpha B_{6k})B_{6k}
\end{split}
\end{equation}
Combining all these results we will have
\begin{equation}
\begin{split}
-I_5 & \leq \frac{C_{\tau_3}}{(T_0-C_{\tau_3})} \{ (\sum_{k=1}^{n_{el}}\frac{B_{6k}}{dt})(\|E_{c}^{A,n+1}\|^2 - \|E_{c}^{A,n}\|^2)+ \\
& \quad  (\sum_{k=1}^{n_{el}} (\bar{D}_{u1} + \bar{D}_{u2} )B_{6k}) Ch (\frac{1+\theta}{2} \mid c^{n+1} \mid_1 + \frac{1-\theta}{2} \mid c^n \mid_1) \\
& \quad +\sum_{k=1}^{n_{el}} (D_{1m}B_{7k}+D_{2m}B_{7k}'+\bar{D}_{u1}B_{8k}+\bar{D}_{u2}B_{8k}'+\alpha B_{6k})B_{6k} \}
\end{split}
\end{equation}
This completes finding the bound for each term of $I_5$. Now we focus on deriving bounds of $I_6$
\begin{equation}
\begin{split}
-I_6 & = -\sum_{k=1}^{n_{el}} \tau_3^{-1}\tau_3' (d_4, E^{A,n,\theta}_{c})_{\Omega_k} \\
& \leq  \frac{\mid \tau_3^{-1} \mid \tau_3'^2}{(dt-\mid \tau_3' \mid)} \{ (\partial_t E^{I,n}_{c}+\partial_t E^{A,n}_{c},E^{A,n,\theta}_{c})_{\Omega_k}-(\bigtriangledown \cdot \tilde{\bigtriangledown} E^{I,n,\theta}_{c} - \textbf{u} \cdot \bigtriangledown E^{I,n,\theta}_{c}-\alpha E^{I,n,\theta}_{c}, \\
& \quad E^{A,n,\theta}_{c})_{\Omega_k} -(\bigtriangledown \cdot \tilde{\bigtriangledown} E^{A,n,\theta}_{c} - \textbf{u} \cdot \bigtriangledown E^{A,n,\theta}_{c}-\alpha E^{A,n,\theta}_{c},E^{A,n,\theta}_{c})_{\Omega_k} \} \\
& = \frac{ \tau_3}{(dt+\tau_3)} \{ (\partial_t E^{A,n}_{c},E^{A,n,\theta}_{c})_{\Omega_k}-(\bigtriangledown \cdot \tilde{\bigtriangledown} E^{I,n,\theta}_{c} - \textbf{u} \cdot \bigtriangledown E^{I,n,\theta}_{c}, E^{A,n,\theta}_{c})_{\Omega_k} - \\
& \quad (\bigtriangledown \cdot \tilde{\bigtriangledown} E^{A,n,\theta}_{c} - \textbf{u} \cdot \bigtriangledown E^{A,n,\theta}_{c}-\alpha E^{A,n,\theta}_{c},E^{A,n,\theta}_{c})_{\Omega_k} \} \\
& \leq \frac{ \tau_3}{(dt+\tau_3)} \{ (\sum_{k=1}^{n_{el}}
\frac{B_{6k}}{dt})(\|E^{A,n+1}_{c}\|^2-\|E^{A,n}_{c}\|^2) + (\sum_{k=1}^{n_{el}} (D_{1m}+D_{2m})B_{6k}) \mid E^{I,n,\theta}_{c}\mid_2 \\
& \quad + (\sum_{k=1}^{n_{el}} (\bar{D}_{u1} + \bar{D}_{u2})B_{6k})\mid E^{I,n,\theta}_{c}\mid_1 +\sum_{k=1}^{n_{el}}(D_{1m} B_{7k} +D_{2m} B_{7k}' + \bar{D}_{u1} B_{8k} + \bar{D}_{u2}B_{8k}' \\
& \quad + \alpha B_{6k} )B_{6k} \} \\
& \leq \frac{ C_{\tau_3}}{(T_0-C_{\tau_3})} \{ (\sum_{k=1}^{n_{el}}
\frac{B_{6k}}{dt})(\|E^{A,n+1}_{c}\|^2-\|E^{A,n}_{c}\|^2)+ (\sum_{k=1}^{n_{el}} C(D_{1m}+D_{2m})B_{6k})  (\frac{1+\theta}{2} \\
& \quad \mid c^{n+1} \mid_2 +\frac{1-\theta}{2} \mid c^n \mid_2)+ C h(\sum_{k=1}^{n_{el}}  (\bar{D}_{u1} + \bar{D}_{u2})B_{6k}) (\frac{1+\theta}{2} \mid c^{n+1} \mid_1 +\frac{1-\theta}{2} \mid c^n \mid_1)\\
\end{split}
\end{equation}
\begin{equation}
\begin{split}
& \quad + \sum_{k=1}^{n_{el}}( D_{1m} B_{7k} +D_{2m} B_{7k}' + \bar{D}_{u1} B_{8k} + \bar{D}_{u2}B_{8k}' + \alpha B_{6k} )B_{6k} \}
\end{split}
\end{equation}
Finally  we have completed finding bounds for each of the terms in the right hand side of (32). Now we explain the further proceeding in language as follows: \vspace{1mm} \\
First we put all the bounds, obtained for each of the terms in the right hand side of (32). Then we  take out few common terms in the left hand side and consequently we have left 3 types of terms in the right hand side. One type will be few constant terms multiplied by $h^2$, other type will be another few constant terms multiplied by h and the remaining constant terms will be free of h. Now we multiply both sides by 2$dt$ and taking summation over n=0,1,...,$(N-1)$ to both the sides. Finally we have (32) as follows:
\begin{multline}
\{1-\frac{2C_{\tau_3}(T+C_{\tau_3})}{T_0-C_{\tau_3}} \sum_{k=1}^{n_{el}}(\alpha B_{6k}+ D_{B_{1k}})- \frac{4C_{\tau_3}}{T_0-C_{\tau_3}} (\sum_{k=1}^{n_{el}} B_{6k})\} \sum_{n=0}^{N-1}  (\|E^{A,n+1}_{c}\|^2-\|E^{A,n}_{c}\|^2) \\
\quad + (\mu_l-\frac{1}{ \epsilon_1}-\frac{\mu_u}{ \epsilon_4}) \sum_{n=0}^{N-1}  (\|\frac{\partial E^{A,n,\theta}_{u1}}{\partial x}\|^2 + \|\frac{\partial E^{A,n,\theta}_{u2}}{\partial y}\|^2 )dt+ (\mu_l-\frac{\mu_u}{ \epsilon_4}) \sum_{n=0}^{N-1} (\|\frac{\partial E^{A,n,\theta}_{u1}}{\partial y}\|^2+\\
\quad  \|\frac{\partial E^{A,n,\theta}_{u2}}{\partial x}\|^2 )dt  + (D_l-\frac{D_m}{ \epsilon_2}-\frac{C_1^n}{ \epsilon_3})\sum_{n=0}^{N-1} \|\frac{\partial E^{A,n,\theta}_{c}}{\partial x}\|^2 dt + (D_l-\frac{D_m}{ \epsilon_2}-\frac{C_2^n}{ \epsilon_3})\sum_{n=0}^{N-1} \| \frac{\partial E^{A,n,\theta}_{c}}{\partial y}\|^2 dt\\
\quad   + 
 \sigma \sum_{n=0}^{N-1} (\|E^{A,n,\theta}_{u1}\|^2 + \|E^{A,n,\theta}_{u2}\|^2) dt +  (\alpha- 2\epsilon_3(C_1^n + C_2^n) \|E^{A,n,\theta}_{c}\|^2 dt \\
\leq 2C h^2 \sum_{n=0}^{N-1} [C \mu_u \epsilon_4 \sum_{i=1}^2 (\frac{1+\theta}{2} \mid u_i^{n+1} \mid_2 + \frac{1-\theta}{2} \mid u_i^n \mid_2)^2 + C \epsilon_1 (\frac{1+\theta}{2} \mid p^{n+1} \mid_1 + \frac{1-\theta}{2} \mid p^n \mid_1)^2 + \\
\quad  C(\frac{D_m \epsilon_2}{2} + \frac{C_1^n+C_2^n}{2 \epsilon_3}) (\frac{1+\theta}{2} \mid c^{n+1} \mid_2 + \frac{1-\theta}{2} \mid c^n \mid_2)^2 + \frac{C_{\tau_3}(T_0+C_{\tau_3})}{T_0(T_0-C_{\tau_3})} (\sum_{k=1}^{n_{el}} D_{B_{1k}})(\mid c^{n+1} \mid_2 + \\
\quad    \mid c^n \mid_2)+ C_{\tau_1} (\sum_{k=1}^{n_{el}} \sigma (\mu_u B_{2k} + \sigma B_{1k} + B_{3k}))(\frac{1+\theta}{2} \mid u_1^{n+1} \mid_2 + \frac{1-\theta}{2} \mid u_1^{n} \mid_2)+ C_{\tau_1} (\sum_{k=1}^{n_{el}} \sigma (\mu_u B_{2k}' + \\
\quad   \sigma  B_{1k}' + B_{3k}'))(\frac{1+\theta}{2} \mid u_2^{n+1} \mid_2 + \frac{1-\theta}{2} \mid u_2^{n} \mid_2)+ (\sum_{k=1}^{n_{el}} D_{B_{4k}})\frac{C_{\tau_3}}{T_0}(\frac{1+\theta}{2} \| c^{n+1} \|+ \frac{1-\theta}{2} \| c^{n} \|) ] dt \\
\quad   + 2 C h \sum_{n=0}^{N-1}[ C_{\tau_1} (\sum_{k=1}^{n_{el}}(\mu_u(B_{2k}+ B_{2k}')+ \sigma (B_{1k}+B_{1k}')+ (B_{3k}+B_{3k}'))(\frac{1+\theta}{2}\mid p^{n+1} \mid_1 + \frac{1-\theta}{2} \mid p^n \mid_1) \\
\quad   + C_{\tau_2} (\sum_{k=1}^{n_{el}}B_{4k}) (\frac{1+\theta}{2} \mid u_1^{n+1} \mid_1 + \frac{1-\theta}{2} \mid u_1^n \mid_1) + C_{\tau_2} (\sum_{k=1}^{n_{el}}B_{5k}')(\frac{1+\theta}{2} \mid u_2^{n+1} \mid_1 +\frac{1-\theta}{2} \mid u_2^n \mid_1) \\
\end{multline}
\begin{multline}
\quad + \{ (\frac{C_{\tau_3} T }{(T_0-C_{\tau_3})}+\frac{C_{\tau_3}}{T_0})(\sum_{k=1}^{n_{el}} D_{B_{3k}})+\frac{4C_{\tau_3}}{T_0-C_{\tau_3}} (\sum_{k=1}^{n_{el}}(\bar{D}_{u1}+\bar{D}_{u2})B_{6k})\}(\frac{1+\theta}{2} \mid c^{n+1} \mid_1 +\frac{1-\theta}{2} \\
\quad \mid c^n \mid_1)] dt +2C \sum_{n=0}^{N-1} \{ C_{\tau_1} (\sum_{k=1}^{n_{el}}(\mu_u^2 B_{2k} + \sigma \mu_u B_{1k} +\mu_u B_{3k} ))(\frac{1+\theta}{2} \mid u_1^{n+1} \mid_2 +\frac{1-\theta}{2}\mid u_1^n \mid_2)+\\
\quad C_{\tau_1} (\sum_{k=1}^{n_{el}}(\mu_u^2 B_{2k}' + \sigma \mu_u B_{1k}' +\mu_u B_{3k}' ))(\frac{1+\theta}{2} \mid u_2^{n+1} \mid_2 +\frac{1-\theta}{2}\mid u_2^n \mid_2)+ (\frac{C_{\tau_3} T }{(T_0-C_{\tau_3})}+\frac{C_{\tau_3}}{T_0}) \\  
\quad (\sum_{k=1}^{n_{el}}D_{B_{2k}})(\frac{1+\theta}{2} \mid c^{n+1} \mid_1 +\frac{1-\theta}{2}\mid c^n \mid_1) + C_{\tau_1} (\sum_{k=1}^{n_{el}}(M_{1k}+M_{2k}))+(\frac{C_{\tau_3} T }{(T_0-C_{\tau_3})}+\frac{C_{\tau_3}}{T_0}) \\
\quad  (\sum_{k=1}^{n_{el}} D_{B_{5k}}) +C_{\tau_2} (\sum_{k=1}^{n_{el}}(B_{4k}^2+ B_{5k}'^2))+(\sum_{k=1}^{n_{el}} (D_{1m}+D_{2m})B_{6k})  (\frac{1+\theta}{2}\mid c^{n+1} \mid_2 +\frac{1-\theta}{2} \mid c^n \mid_2)\\
\quad + 2( \sum_{k=1}^{n_{el}}( D_{1m} B_{7k} +D_{2m} B_{7k}' + \bar{D}_{u1} B_{8k} + \bar{D}_{u2}B_{8k}' + \alpha B_{6k} )B_{6k})\} dt \hspace{10mm}
\end{multline}
We can choose the values of the elements in such a manner that we can make all the coefficients in the left hand side positive. Now after taking minimum of all the coefficients in left hand side, let us divide both the sides with that minimum, which turns out to be a positive real number. Now by applying initial condition on $c$ we will have $\|E^{A,0}_{c}\|=0$. \vspace{1mm}\\
After performing all these intermediate steps we will finally arrive at the following expression:
\begin{multline}
\|E^{A,N}_{c}\|^2 +\sum_{n=0}^{N-1}  (\|\frac{\partial E^{A,n,\theta}_{u1}}{\partial x}\|^2 + \|\frac{\partial E^{A,n,\theta}_{u1}}{\partial y}\|^2 + \|E^{A,n,\theta}_{u1} \|^2 )dt+ \\
 \quad \sum_{n=0}^{N-1}  (\|\frac{\partial E^{A,n,\theta}_{u2}}{\partial x}\|^2 + \|\frac{\partial E^{A,n,\theta}_{u2}}{\partial y}\|^2 + \|E^{A,n,\theta}_{u2} \|^2 )dt + \\
 \quad \sum_{n=0}^{N-1}  (\|\frac{\partial E^{A,n,\theta}_{c}}{\partial x}\|^2 + \|\frac{\partial E^{A,n,\theta}_{c}}{\partial y}\|^2 + \|E^{A,n,\theta}_{c} \|^2 )dt \leq C(T,\textbf{u},p,c) (h^2+h+dt^{2r})
\end{multline}
This implies
\begin{equation}
\|E^{A,N}_{u1}\|_{l^2(H^1)}^2+ \|E^{A,N}_{u2}\|_{l^2(H^1)}^2+ \|E^{A,N}_{c}\|_V^2 \leq C(T,\textbf{u},p,c) (h^2+h+dt^{2r})
\end{equation}
where
\begin{equation}
    r=
    \begin{cases}
      1, & \text{if}\ \theta=1 \\
      2, & \text{if}\ \theta=0
    \end{cases}
  \end{equation}
We have used the fact that $\sum_{n=0}^{N-1} g_n dt \leq C T \sum_{n=0}^{N-1} g_n $. This completes the first part of the proof. \vspace{2mm}\\
\textbf{Second part} Using this above result we are going to estimate auxiliary error part of pressure. We will use inf-sup condition to find estimate for $E_p^A$. Applying Galerkin orthogonality only for variational form of Stokes-Darcy flow problem we have obtained \\
\begin{equation}
\begin{split}
 a_S (\textbf{u}-\textbf{u}_h, \textbf{v}_h)- b(\textbf{v}_h, p-p_h) & =0 \\ 
b(\textbf{v}_h, p-I_hp)+ b(\textbf{v}_h, I_hp-p_h) & =  a_S(E^I_\textbf{u}, \textbf{v}_h)+  a_S(E^A_\textbf{u}, \textbf{v}_h)
\end{split}
\end{equation}
Using the inclusion $\bigtriangledown \cdot V_s^h \subset Q_s^h$ and the property of the $L^2$ orthogonal projection of $I^h_p$ we have 
\begin{equation}
b(\textbf{v}_h, p-I_hp)= \int_{\Omega}(p-I_hp)(\bigtriangledown \cdot \textbf{v})=0
\end{equation}
Now according to inf-sup condition we will have the following expression
\begin{equation}
\begin{split}
\|p-I_hp\|_Q^2& = \|E_p^A\|_Q^2 \\
& = \sum_{n=0}^{N-1} \|E_p^{A,n,\theta}\|^2 dt \\
& \leq  \sum_{n=0}^{N-1} \underset{\textbf{v}_h}{sup} \frac{b(\textbf{v}_h, E_p^{A,n,\theta})}{\|\textbf{v}_h\|_1} dt
\end{split}
\end{equation}
Now from (72)
\begin{equation}
\begin{split}
\sum_{n=0}^{N-1} b(\textbf{v}_h, E_p^{A,n,\theta}) dt & = \sum_{n=0}^{N-1}\{ a_S(E^{I,n,\theta}_\textbf{u}, \textbf{v}_h)+a_S(E^{A,n,\theta}_\textbf{u}, \textbf{v}_h)\} dt\\
& = \sum_{n=0}^{N-1} \{ \int_{\Omega} \mu(c^n) \bigtriangledown E^{I,n,\theta}_{u1} \cdot \bigtriangledown v_{1h} +  \int_{\Omega} \mu(c^n) \bigtriangledown E^{I,n,\theta}_{u2} \cdot \bigtriangledown v_{2h}+ \\
& \quad \sigma \int_{\Omega}  (E^{I,n,\theta}_{u1} v_{1h} +E^{I,n,\theta}_{u2} v_{2h})+  \int_{\Omega} \mu(c^n) \bigtriangledown E^{A,n,\theta}_{u1} \cdot \bigtriangledown v_{1h} + \\
& \quad \int_{\Omega} \mu(c^n) \bigtriangledown E^{A,n,\theta}_{u2} \cdot \bigtriangledown v_{2h} +  \sigma \int_{\Omega}  (E^{A,n,\theta}_{u1} v_{1h} +E^{A,n,\theta}_{u2} v_{2h}) \} dt\\
& \leq (\mu_u + \sigma) \sum_{n=0}^{N-1} \{(\|E^{I,n,\theta}_{u1}\|_1+\|E^{A,n,\theta}_{u1}\|_1) \|v_{1h}\|_1 + (\|E^{I,n,\theta}_{u2}\|_1 + \\
& \quad \|E^{A,n,\theta}_{u2}\|_1) \|v_{2h}\|_1 \} dt \\
& \leq (\mu_u + \sigma) \sum_{n=0}^{N-1} \{(\|E^{I,n,\theta}_{u1}\|_1 + \|E^{I,n,\theta}_{u2}\|_1 +\|E^{A,n,\theta}_{u1}\|_1+\|E^{A,n,\theta}_{u2}\|_1)\\
& \quad (\|v_{1h}\|_1+\|v_{2h}\|_1)\}dt\\
\end{split}
\end{equation}
\begin{equation}
\begin{split}
& \leq (\mu_u + \sigma)(\|E^{A}_{u1}\|_{l^2(H^1)} + \|E^{A}_{u2}\|_{l^2(H^1)})(\|v_{1h}\|_1+\|v_{2h}\|_1)+ (\mu_u + \sigma)\\
& \quad \sum_{n=0}^{N-1}(\|E^{I,n,\theta}_{u1}\|_1+\|E^{I,n,\theta}_{u2}\|_1)(\|v_{1h}\|_1+\|v_{2h}\|_1)dt\\
& \leq (\mu_u + \sigma)(\|E^{A}_{u1}\|_{l^2(H^1)}^2 + \|E^{A}_{u2}\|_{l^2(H^1)}^2+ \|E^{A}_{c}\|_V^2) \|\textbf{v}_h\|_1 + (\mu_u + \sigma)Ch \\
& \quad \sum_{n=0}^{N-1}\{\sum_{i=1}^{2}(\frac{1+\theta}{2}\mid u_i^{n+1}\mid_2 + \frac{1-\theta}{2}\mid u_i^{n}\mid_2)dt\} \|\textbf{v}_h\|_1 \\
& \leq C'(T,\textbf{u},p,c) (h^2+h+dt^{2r}) \|\textbf{v}_h\|_1
\end{split}
\end{equation}
Using this above result into (74), we will have the estimate for the pressure term
\begin{equation}
\|p-I_hp\|_Q^2 \leq C'(T,\textbf{u},p,c) (h^2+h+dt^{2r})
\end{equation}
Now combining the results obtained in the first and second part we have finally arrived at the auxiliary error estimate as follows
\begin{equation}
\|E^A_{u1}\|^2_{l^2(H^1)} + \|E^A_{u2}\|^2_{l^2(H^1)}+ \|E^A_p\|_{l^2(L^2)}^2  + \|E^A_{c}\|^2_V \leq \bar{C}(T,\textbf{u},p,c) (h^2+h+ dt^{2r})
\end{equation}
where
\begin{equation}
    r=
    \begin{cases}
      1, & \text{if}\ \theta=1 \\
      2, & \text{if}\ \theta=0
    \end{cases}
  \end{equation}
This completes the proof.
\end{proof}
\begin{theorem}(Apriori error estimate)
Assuming the same condition as in the previous theorem, 
\begin{equation}
\|u_1-u_{1h}\|_{l^2(H^1)}^2+ \|u_2-u_{2h}\|_{l^2(H^1)}^2+\|p-p_h\|_{l^2(L^2)}^2 + \|c-c_h\|^2_V \leq C'' (h^2+h+ dt^{2r})
\end{equation}
where C' depends on T, $\textbf{u}$,p,c and
\begin{equation}
    r=
    \begin{cases}
      1, & \text{if}\ \theta=1 \\
      2, & \text{if}\ \theta=0
    \end{cases}
  \end{equation}
\end{theorem}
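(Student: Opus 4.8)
The plan is to deduce Theorem 2 from the auxiliary estimate of Theorem 1 by a triangle-inequality argument, using the error decomposition $e=E^I+E^A$ from the error-splitting subsection together with the interpolation bound $\|E^I_v\|_l \le C\,h^{m+1-l}\|v\|_{m+1}$. Since Theorem 1 already controls every auxiliary component in exactly the norms appearing on the left-hand side of the asserted estimate, the only additional ingredient needed is a bound on the interpolation components $E^I_{u1},E^I_{u2},E^I_p,E^I_c$, and these are furnished directly by the interpolation estimate once the polynomial degree of each finite element space and the regularity of the exact solution are matched.

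First I would write, for each unknown, $e_{u1}=E^I_{u1}+E^A_{u1}$, $e_{u2}=E^I_{u2}+E^A_{u2}$, $e_p=E^I_p+E^A_p$ and $e_c=E^I_c+E^A_c$, and apply the elementary inequality $(a+b)^2\le 2a^2+2b^2$ in each of the norms $\|\cdot\|_{l^2(H^1)}$, $\|\cdot\|_{l^2(L^2)}$ and $\|\cdot\|_V$, giving
\[
\|u_i-u_{ih}\|^2_{l^2(H^1)}\le 2\|E^I_{ui}\|^2_{l^2(H^1)}+2\|E^A_{ui}\|^2_{l^2(H^1)}
\]
for $i=1,2$, and the analogous splittings for $p$ in the $l^2(L^2)$ norm and for $c$ in the $V$ norm. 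Summing these four inequalities, the auxiliary contributions collect into precisely the left-hand side of the estimate proved in Theorem 1, so they are bounded by $2\bar C\,(h^2+h+dt^{2r})$.

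It then remains to bound the interpolation contributions. Since $V_s^h$ consists of $\mathcal P^2$ elements, the interpolation estimate with $m=2$ gives $\|E^I_{ui}\|_1\le C h^2\|u_i\|_3$ and, for the concentration, $\|E^I_c\|_1\le C h^2\|c\|_3$ for the $l^2(H^1)$ part of the $V$ norm and $\max_n\|E^{I,n}_c\|\le C h^3\max_n\|c^n\|_3$ for its time-maximum $L^2$ part; since $Q_s^h$ consists of $\mathcal P^1$ elements, the estimate with $m=1$ gives $\|E^I_p\|\le C h^2\|p\|_2$. Squaring, summing over the time levels, and using $\sum_{n} g_n\,dt\le CT\sum_{n} g_n$ as in the first part of the previous proof, every interpolation term is $O(h^4)$, which is absorbed into the $h^2+h$ terms already present. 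Collecting the auxiliary and interpolation bounds and renaming the constant as $C''=C''(T,\textbf u,p,c)$ yields the asserted estimate.

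No genuine obstacle remains: the analytically demanding work --- the coercivity/stability estimate absorbing the subgrid stabilization terms $I_3$ through $I_6$ and the inf-sup argument for the pressure --- was already carried out in Theorem 1. The only points requiring care are bookkeeping ones: matching each interpolation estimate to the correct norm (full $H^1$ for velocity, $L^2$ for pressure, and the composite $V$ norm for concentration) and checking that the assumed spatial regularity ($\textbf u,c\in H^3$ and $p\in H^2$), together with the time regularity entering the truncation error, is exactly what the interpolation estimate consumes.
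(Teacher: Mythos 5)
Your proposal is correct and follows essentially the same route as the paper's own proof: split each error as $e = E^I + E^A$, apply the triangle inequality (in the form $(a+b)^2 \le 2a^2+2b^2$), bound the auxiliary parts by Theorem 1, and bound the interpolation parts by the estimate $\|E^I_v\|_l \le C h^{m+1-l}\|v\|_{m+1}$. The only difference is that you spell out the bookkeeping (matching $m=2$ for the $\mathcal{P}^2$ velocity/concentration spaces and $m=1$ for the $\mathcal{P}^1$ pressure space, so the interpolation contributions are $O(h^4)$ and are absorbed) which the paper leaves implicit.
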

\begin{proof}
By applying triangle inequality, the interpolation inequalities and the result of the previous theorem we will have,
\begin{multline}
\|u_1-u_{1h}\|_{l^2(H^1)}^2+ \|u_2-u_{2h}\|_{l^2(H^1)}^2 +\|p-p_h\|_{l^2(L^2)}^2 +\|c-c_h\|^2_V \\
 = \|E^I_{u1}+E^A_{u1}\|_{l^2(H^1)}^2 + \|E^I_{u2}+E^A_{u2}\|_{l^2(H^1)}^2 +  \|E^I_{p}+E^A_{p}\|_{l^2(L^2)}^2+ \|E^I_{c}+E^A_{c}\|_V^2 \\
 \leq \bar{C} (\|E^I_{u1}\|_{l^2(H^1)}^2 + \|E^I_{u2}\|_{l^2(H^1)}^2+\|E^I_{c}\|_{l^2(L^2)}^2 +  \|E^I_{c}\|_V^2+ 
 \|E^A_{u1}\|_{l^2(H^1)}^2+\|E^A_{u2}\|_{l^2(H^1)}^2\\
  \quad + \|E^A_{p}\|_{l^2(L^2)}^2+\|E^A_{c}\|_V^2) \hspace{80mm}\\
 \leq C''(T,\textbf{u},p,c)(h^2+h+ dt^{2r}) \hspace{70mm}
\end{multline}
This completes apriori error estimation.
\end{proof}

\subsection{Aposteriori error estimation}
In this section we are going to derive residual based aposteriori error estimation. \\
We have $B(\textbf{V},\textbf{V})= a_S(\textbf{v},\textbf{v})+a_T(d,d) \geq \mu_l (\|v_1\|_1^2+\|v_2\|_1^2)+ D_{\alpha} \|d\|_1^2$ \hspace{1mm} $\forall \textbf{V} \in \textbf{V}_F$ \\
Now we substitute the errors $e_{u1},e_{u2},e_c$ into the relation we will similarly have
\begin{multline}
\mu_l (\|e_{u1}\|_1^2+\|e_{u2}\|_1^2)+ D_{\alpha} \|e_c\|_1^2  \leq a_S(e_\textbf{u},e_\textbf{u})+a_T(e_c,e_c)\\
\end{multline} 
By adding few terms in both sides the above equation becomes
\begin{multline}
\underbrace{(\frac{\partial e_{c}}{\partial t},e_{c})+\mu_l (\|e_{u1}\|_1^2+\|e_{u2}\|_1^2)+\sigma \|e_p\|^2+ D_{\alpha} \|e_c\|_1^2}_\textit{LHS} \\
 \leq \underbrace{(\frac{\partial e_{c}}{\partial t},e_{c})+a_S(e_\textbf{u},e_\textbf{u})+a_T(e_c,e_c)+ (e_p,e_p)+
 b(e_\textbf{u},e_p)-b(e_\textbf{u},e_p)}_\textit{RHS}
\end{multline} 
Now first we will find a lower bound of $LHS$ and then upper bound for $RHS$ and finally combining them we will get aposteriori error estimate. To find the lower bound the $LHS$ can be written as
\begin{multline}
LHS =  (\frac{e_{c}^{n+1}-e_{c}^n}{dt},e_{c}^{n,\theta})+ 
\mu_l \sum_{i=1}^{2}(\|e_{ui}^{n,\theta}\|^2+ \|\frac{\partial e_{ui}^{n,\theta}}{\partial x}\|^2 +  \|\frac{\partial e_{ui}^{n,\theta}}{\partial y}\|^2)+ \\     \sigma \|e_p^{n,\theta}\|^2 + D_{\alpha}(\|e_{c}^{n,\theta}\|^2+
 \|\frac{\partial e_{c}^{n,\theta}}{\partial x}\|^2 + \|\frac{\partial e_{c}^{n,\theta}}{\partial y}\|^2) \hspace{20mm}
\end{multline}
Using the same argument done in (33) we have
\begin{equation}
\begin{split}
(\frac{e_{c}^{n+1}-e_{c}^n}{dt},e_{c}^{n,\theta}) & \geq \frac{1}{2 dt}(\|e_{c}^{n+1}\|^2-\|e_{c}^n\|^2) \\
\end{split}
\end{equation}
Hence
\begin{multline}
\frac{1}{2 dt}(\|e_{c}^{n+1}\|^2-\|e_{c}^n\|^2)+\mu_l \sum_{i=1}^{2}(\|e_{ui}^{n,\theta}\|^2+ \|\frac{\partial e_{ui}^{n,\theta}}{\partial x}\|^2 + \|\frac{\partial e_{ui}^{n,\theta}}{\partial y}\|^2)+ \\
\sigma \|e_p^{n,\theta}\|^2+
 D_{\alpha}(\|e_{c}^{n,\theta}\|^2+
 \|\frac{\partial e_{c}^{n,\theta}}{\partial x}\|^2 + \|\frac{\partial e_{c}^{n,\theta}}{\partial y}\|^2) \leq LHS \leq RHS
\end{multline}
Now our aim is to find upper bound for $RHS$. Before proceeding let us write few notions clearly.\\
$E^A_{u1}=(I^h_{u1} u_1-u_{1h})= (I^h_{u1} u_1-I^h_{u1} u_{1h})=I^h_{u1} e_{u1}$ \vspace{1mm}  \\
Similarly $E^A_{u2}= I^h_{u2} e_{u2}$, \hspace{1mm} $E^A_{p}= I^h_{p} e_{p}$ and $E^A_{c}= I^h_{c} e_{c}$ \vspace{1mm} \\
Now we divide $RHS$ into two broad parts by adding and subtracting corresponding auxiliary errors with each of the terms as follows:

\begin{multline}
RHS = \{ (\frac{\partial e_{c}}{\partial t},e_{c}-I^h_{c} e_{c})+ a_S(e_\textbf{u},e_\textbf{u}-I^h_\textbf{u} e_\textbf{u})+a_T(e_c,e_c-I^h_{c} e_{c})\\
-  b(e_\textbf{u}-I^h_\textbf{u} e_\textbf{u},e_p)+ 
b(e_\textbf{u},e_p-I^h_{p} e_{p})\} + 
\{ (\frac{\partial e_{c}}{\partial t},I^h_{c} e_{c})\\
 + a_S(e_\textbf{u},I^h_\textbf{u} e_\textbf{u})
+a_T(e_c,I^h_{c} e_{c})-  b(I^h_\textbf{u} e_\textbf{u},e_p)+ b(e_\textbf{u},I^h_{p} e_{p})\}
\end{multline}
In the expression of $RHS$ the first under brace part is first part and second one is second part. Before proceeding further let us introduce the residuals corresponding to each equations \\
\[
\textbf{R}^h=
  \begin{bmatrix}
 \textbf{f}_1-( - \mu(c) \Delta \textbf{u}_h + \sigma \textbf{u}_h + \bigtriangledown p_h) \\
  f_2  -\bigtriangledown \cdot \textbf{u}_h \\
 g-(\frac{\partial c_h}{\partial t} - \bigtriangledown \cdot \tilde{\bigtriangledown} c_h + \textbf{u} \cdot \bigtriangledown c_h + \alpha c_h )
  \end{bmatrix}
\]
This column vector $\textbf{R}^h$ has four components $R_1^h, R_2^h, R_3^h$ and $R_4^h$ denoting four rows respectively. Let us start finding bound for the first part as follows: for all $\textbf{v}=(v_1,v_2) \in V_s \times V_s$
\begin{multline}
 \int_{\Omega} \mu(c^n)\bigtriangledown e_\textbf{u}^{n,\theta}: \bigtriangledown \textbf{v} + \int_{\Omega} \sigma e_\textbf{u}^{n,\theta} \cdot \textbf{v} - \int_{\Omega} (\bigtriangledown \cdot \textbf{v}) e_p^{n,\theta} \\
= \{ \int_{\Omega} \mu(c^n)\bigtriangledown \textbf{u}^{n,\theta}: \bigtriangledown \textbf{v} + \int_{\Omega} \sigma \textbf{u}^{n,\theta} \cdot \textbf{v} - \int_{\Omega} (\bigtriangledown \cdot \textbf{v}) p^{n,\theta} \} -\{ \int_{\Omega} \mu(c^n)\bigtriangledown \textbf{u}_h^{n,\theta}: \bigtriangledown \textbf{v} + \hspace{2mm}\\
  \int_{\Omega} \sigma \textbf{u}_h^{n,\theta} \cdot \textbf{v} - \int_{\Omega} (\bigtriangledown \cdot \textbf{v}) p_h^{n,\theta} \} \hspace{50mm}\\
= \int_{\Omega}( - \mu(c^n) \Delta \textbf{u}^{n,\theta} + \sigma \textbf{u}^{n,\theta} + \bigtriangledown p^{n,\theta}) \cdot \textbf{v} - \int_{\Omega}( - \mu(c^n) \Delta \textbf{u}_h^{n,\theta} + \sigma \textbf{u}_h^{n,\theta} + \bigtriangledown p_h^{n,\theta}) \cdot \textbf{v} \hspace{10mm}\\
= (R_1^{h,n,\theta},v_1)+ (R_2^{h,n,\theta},v_2) \hspace{80 mm}
\end{multline}
Similarly $\int_{\Omega} (\bigtriangledown \cdot e_\textbf{u}^{n,\theta})q = \int_{\Omega} R_3^{h,n,\theta} q$ \hspace{2mm} $\forall q \in Q_s$
\begin{multline}
\int_{\Omega} (\frac{ e_c^{n+1}-e_c^{n}}{dt} d + \tilde{\bigtriangledown} e_c^{n,\theta} \cdot \bigtriangledown d + d \textbf{u} \cdot \bigtriangledown e_c^{n,\theta} + \alpha e_c^{n,\theta} d)= \int_{\Omega} R_4^{h,n,\theta} d \hspace{3 mm} \forall d \in V_s \hspace{30 mm}
\end{multline}
Now substituting $v_1, v_2 ,q,d$ in the above expressions by $(e_{u1}-I^h_{u1}e_{u1}),(e_{u2}-I^h_{u2}e_{u2}), (e_{p}-I^h_{p}e_{p}), (e_{c}-I^h_{c}e_{c})$ respectively, we will have the first part of the $RHS$ as,

\begin{equation}
\begin{split}
First \hspace{1mm} part \hspace{1mm} of \hspace{1mm} RHS & = \int_{\Omega}\{ R_1^{h,n,\theta} (e_{u1}^{n,\theta}-I^h_{u1}e_{u1}^{n,\theta}) +  R_2^{h,n,\theta} (e_{u2}^{n,\theta}-I^h_{u2}e_{u2}^{n,\theta}) + R_3^{h,n,\theta} (e_{p}^{n,\theta}-I^h_{p}e_{p}^{n,\theta}) +\\ 
& \quad R_4^{h,n,\theta} (e_{c}^{n,\theta}-I^h_{c}e_{c}^{n,\theta})\}\\
& \leq \|R_1^{h,n,\theta}\| \|e_{u1}^{n,\theta}-I^h_{u1}e_{u1}^{n,\theta}\| + \|R_2^{h,n,\theta}\| \|e_{u2}^{n,\theta}-I^h_{u2}e_{u2}^{n,\theta}\| + \|R_3^{h,n,\theta}\| \|e_{p}^{n,\theta}-I^h_{p}e_{p}^{n,\theta}\| \\
& \quad + \|R_4^{h,n,\theta}\| \|e_{c}^{n,\theta}-I^h_{c}e_{c}^{n,\theta}\| \hspace{10mm}( by \hspace{1mm} Cauchy-Schwarz \hspace{1mm} inequality)\\
& \leq \|R_1^{h,n,\theta}\| h \mid e_{u1}^{n,\theta}\mid_1 + \|R_2^{h,n,\theta}\| h \mid e_{u2}^{n,\theta}\mid_1 + \|R_3^{h,n,\theta}\| h \mid e_{p}^{n,\theta}\mid_1 + \|R_4^{h,n,\theta}\| h \mid e_{c}^{n,\theta}\mid_1\\
& \leq \frac{h^2}{2 \epsilon_1}\|R_1^{h,n,\theta}\|^2+ \frac{\epsilon_1}{2} \mid e_{u1}^{n,\theta}\mid_1^2+ \frac{h^2}{2 \epsilon_1}\|R_2^{h,n,\theta}\|^2+ \frac{\epsilon_1}{2} \mid e_{u2}^{n,\theta}\mid_1^2+ \frac{h^2}{2 \epsilon_1}\|R_3^{h,n,\theta}\|^2+ \\
& \quad  \frac{\epsilon_1}{2} \mid e_{p}^{n,\theta}\mid_1^2 + \frac{h^2}{2 \epsilon_1}\|R_4^{h,n,\theta}\|^2+ \frac{\epsilon_1}{2} \mid e_{c}^{n,\theta}\mid_1^2 \hspace{5mm}( by \hspace{1mm} Young's \hspace{1mm} inequality) \\
& \leq \frac{h^2}{2 \epsilon_1} (\|R_1^{h,n,\theta}\|^2+\|R_2^{h,n,\theta}\|^2+\|R_3^{h,n,\theta}\|^2+\|R_4^{h,n,\theta}\|^2)+ \frac{\epsilon_1}{2} (\|e_{u1}^{n,\theta}\|_1^2+\|e_{u2}^{n,\theta}\|_1^2+\\
& \quad \|e_p^{n,\theta}\|_1^2+ \|e_c^{n,\theta}\|_1^2)
\end{split}
\end{equation}
This completes finding bound for first part of $RHS$. Now we are going to estimate remaining second part of $RHS$. For that we will use subgrid formulation (8). Subtracting (8) from the variational finite element formulation satisfied by the exact solution we have $\forall \textbf{V}_h \in \textbf{V}_F^h $
\begin{multline}
\int_{\Omega} \frac{e_{c}^{n+1}-e_c^n}{ dt} d_{h}+ \int_{\Omega} \mu(c^n)\bigtriangledown e_\textbf{u}^{n,\theta}: \bigtriangledown \textbf{v}_h + \int_{\Omega} \sigma e_\textbf{u}^{n,\theta} \cdot \textbf{v}_h - \int_{\Omega} (\bigtriangledown \cdot \textbf{v}_h) e_p^{n,\theta} + \int_{\Omega} (\bigtriangledown \cdot e_\textbf{u}^{n,\theta}) q_h + \\
  \int_{\Omega} \tilde{\bigtriangledown} e_c^{n,\theta} \cdot \bigtriangledown d_h + \int_{\Omega} d_h \textbf{u} \cdot \bigtriangledown e_c^{n,\theta} + \int_{\Omega} \alpha e_c^{n,\theta} d_h \\
 = \sum_{k=1}^{n_{el}} \{(\tau_k'(\textbf{R}^{h,n,\theta}+\textbf{d}), -\mathcal{L}^* \textbf{V}_h)_{\Omega_k} - ((I-\tau_k^{-1}\tau_k) \textbf{R}^{h,n,\theta}, \textbf{V}_h)_{\Omega_k} + (\tau_k^{-1}\tau_k \textbf{d}, \textbf{V}_h)_{\Omega_k} \} \\
= \sum_{k=1}^{n_{el}} \{ \tau_1'(R_1^{h,n,\theta}, \mu(c) \Delta v_{1h}-\sigma v_{1h}+ \frac{\partial q_h}{\partial x})_{k} +\tau_1'(R_2^{h,n,\theta}, \mu(c) \Delta v_{2h}-\sigma v_{2h}+ \frac{\partial q_h}{\partial y})_{k}+ \\
\quad \tau_2'(R_3^{h,n,\theta}, \bigtriangledown \cdot \textbf{v}_h)_{k} + \tau_3'(R_4^{h,n,\theta}+d_4, \bigtriangledown \cdot \tilde{\bigtriangledown} d_h + \textbf{u} \cdot \bigtriangledown d_h - \alpha d_h)_{k} \\
\quad + (1-\tau_3^{-1}\tau_3')(R_4^{h,n,\theta},d_{h})_k +\tau_3^{-1}\tau_3' (d_4,d_{h})_k\} \hspace{30mm}
\end{multline}
Here $(\cdot,\cdot)_k $ in simple form denotes $(\cdot,\cdot)_{\Omega_k}$ \vspace{1mm}\\
Now substituting $\textbf{V}_h$ by $(I^h_{u1}e_{u1}, I^h_{u2}e_{u2}, I^h_{p}e_{p}, I^h_{c}e_{c})$ in the above equation we will get the second part of $RHS$ as follows

\begin{multline}
 (\frac{e_{c}^{n+1}-e_c^n}{dt},I^h_{c} e_{c})
 + a_S(e_\textbf{u}^{n,\theta},I^h_\textbf{u} e_\textbf{u})
+a_T(e_c^{n,\theta},I^h_{c} e_{c})-b(I^h_\textbf{u} e_\textbf{u},e_p^{n,\theta})+ b(e_\textbf{u}^{n,\theta},I^h_{p} e_{p}) \\
 = \sum_{k=1}^{n_{el}} \{ \tau_1'(R_1^{h,n,\theta}, \mu(c) \Delta I^h_{u1} e_{u1}-\sigma I^h_{u1} e_{u1}+ \frac{\partial I^h_{p} e_{p}}{\partial x})_k + \tau_2'(R_3^{h,n,\theta}, \bigtriangledown \cdot I^h_{\textbf{u}} e_\textbf{u})_k +\\
\quad \tau_1'(R_2^{h,n,\theta},\mu(c) \Delta I^h_{u2} e_{u2} -\sigma I^h_{u2} e_{u2}+ \frac{\partial I^h_{p} e_{p}}{\partial y})_k+ (1-\tau_3^{-1}\tau_3')(R_4^{h,n,\theta},I^h_{c} e_{c})_k + \\
\quad \tau_3'(R_4^{h,n,\theta}+d_4, \bigtriangledown \cdot \tilde{\bigtriangledown} I^h_{c} e_{c} + \textbf{u} \cdot \bigtriangledown I^h_{c} e_{c} - \alpha I^h_{c} e_{c})_k + 
\tau_3^{-1}\tau_3' (d_4,I^h_{c} e_{c})_k\}
\end{multline}
Now we will bound each of the term starting with 4th term of the right hand side of the above equation.
\begin{equation}
\begin{split}
\sum_{k=1}^{n_{el}}(1-\tau_3^{-1}\tau_3')(R_4^{h,n,\theta},I^h_{c} e_{c})_k & \leq \frac{C_{\tau_3}}{T_0-C_{\tau_3}} (\sum_{k=1}^{n_{el}}  B_{6k}) \|R_4^{h,n,\theta}\|
\end{split}
\end{equation}
We have obtained this using Cauchy-Schwarz inequality and then imposing bound on auxiliary error corresponding to $u_1$ over each sub-domain $\Omega_k$. Before proceeding further let us look into the form of the column vector $\textbf{d}$ which has components $d_1,d_2,d_3$ and $d_4$ \vspace{1mm} \\
$\textbf{d}$= $\sum_{i=1}^{n+1}(\frac{1}{dt}M\tau_k')^i(\textbf{F} -M\partial_t \textbf{U}_h - \mathcal{L}\textbf{U}_h)=\sum_{i=1}^{n+1}(\frac{1}{dt}M\tau_k')^i \textbf{R}^h$ \vspace{2mm}\\
Hence clearly $d_1 =0$, $d_2=0$, $d_3= 0$ and $d_4=(\sum_{i=1}^{n+1}(\frac{1}{dt} \tau_3')^i) R_4^{h,n,\theta}$ \vspace{2mm} \\
Now we can bound the last term as follows
\begin{equation}
\begin{split}
\sum_{k=1}^{n_{el}}\tau_3^{-1}\tau_3' (d_4,I^h_{c} e_{c})_k & = \tau_3^{-1}\tau_3' (\sum_{i=1}^{n+1}(\frac{1}{dt} \tau_3')^i) \sum_{k=1}^{n_{el}} (R_4^{h,n,\theta},I^h_{c} e_{c})_k \\
& \leq \tau_3^{-1}\tau_3' (\sum_{i=1}^{\infty}(\frac{1}{dt} \tau_3')^i) \sum_{k=1}^{n_{el}} (R_4^{h,n,\theta},I^h_{c} e_{c})_k \\
& \leq \frac{C_{\tau_3}}{(T_0-C_{\tau_3}} (\sum_{k=1}^{n_{el}} B_{6k}) \|R_4^{h,n,\theta}\|
\end{split}
\end{equation}
Now it will be easy enough to bound the remaining terms of the right hand side.
\begin{equation}
\begin{split}
\sum_{k=1}^{n_{el}} \tau_1'(R_1^{h,n,\theta}, \mu(c) \Delta I^h_{u1} e_{u1}-\sigma I^h_{u1} e_{u1}+ \frac{\partial I^h_{p} e_{p}}{\partial x})_k  
& \leq \mid \tau_1'\mid (\sum_{k=1}^{n_{el}}(\mu_u B_{2k} + \mid \sigma \mid B_{1k}+ B_{3k})) \|R_1^{h,n,\theta}\| \hspace{30 mm} \\
& \leq C_{\tau_1} (\sum_{k=1}^{n_{el}} \bar{B}_{1k}) \|R_1^{h,n,\theta}\|  \\
\sum_{k=1}^{n_{el}} \tau_1'(R_2^{h,n,\theta}, \mu(c) \Delta I^h_{u2} e_{u2}-\sigma I^h_{u2} e_{u2}+ \frac{\partial I^h_{p} e_{p}}{\partial y})_k 
& \leq \mid \tau_1'\mid (\sum_{k=1}^{n_{el}}(\mu_u B_{2k}' + \mid \sigma \mid B_{1k}'+ B_{3k}')) \|R_2^{h,n,\theta}\| \\
& \leq C_{\tau_1} (\sum_{k=1}^{n_{el}} \bar{B}_{2k}) \|R_2^{h,n,\theta}\| \hspace{2mm}(say)\\
\end{split}
\end{equation}
where $\bar{B}_{1k}$= $(\mu_u B_{2k} + \mid \sigma \mid B_{1k}+ B_{3k})$ and $\bar{B}_{2k}$= $(\mu_u B_{2k}' + \mid \sigma \mid B_{1k}'+ B_{3k}') $
and
\begin{multline}
\sum_{k=1}^{n_{el}} \tau_3'(R_4^{h,n,\theta}+d_4, \bigtriangledown \cdot \tilde{\bigtriangledown} I^h_c e_c + \textbf{u} \cdot \bigtriangledown I^h_c e_c - \alpha I^h_c e_c)_k \\
=\sum_{k=1}^{n_{el}} \tau_3'(R_4^{h,n,\theta},\bigtriangledown \cdot \tilde{\bigtriangledown} I^h_c e_c + \textbf{u} \cdot \bigtriangledown I^h_c e_c - \alpha I^h_c e_c)_k) + \sum_{k=1}^{n_{el}} \tau_3'(d_4, \bigtriangledown \cdot \tilde{\bigtriangledown} I^h_c e_c + \textbf{u} \cdot \bigtriangledown I^h_c e_c - \alpha I^h_c e_c)_k \\
\leq (\mid \tau_3'\mid + \frac{\tau_3'^2}{dt-\mid \tau_3'\mid})(\sum_{k=1}^{n_{el}}(D_{1m} B_{7k}+D_{2m} B_{7k}' + D_{u1} B_{8k}+D_{u2} B_{8k}' + \mid \alpha \mid B_{6k} )) \|R_4^{h,n,\theta}\| \hspace{21mm} \\
\leq \frac{C_{\tau_3} T_0}{T_0-C_{\tau_3}}(\sum_{k=1}^{n_{el}} \bar{B}_{4k}) \|R_4^{h,n,\theta}\| \hspace{2mm} (say) \hspace{60mm}
\end{multline}
where $\bar{B}_{4k}$= $(D_{1m} B_{7k}+D_{2m} B_{7k}' + D_{u1} B_{8k}+D_{u2} B_{8k}' + \mid \alpha \mid B_{6k} )$.\\
Finally
\begin{equation} 
\begin{split}
\sum_{k=1}^{n_{el}} \tau_2'(R_3^{h,n,\theta}, \bigtriangledown \cdot I^h_\textbf{u} e_\textbf{u})_k & \leq C_{\tau_2}(\sum_{k=1}^{n_{el}} (B_{4k}+B_{5k})) \|R_3^{h,n,\theta}\| \\
\end{split}
\end{equation}
Now this completes finding bounds for each term in the $RHS$ of (84). Therefore our next work is to combine all the results into equation (87). Putting common terms all together to the left hand side and then taking summation over $n=0,...,(N-1)$ on both sides of (87) and multiplying them by $2dt$ we will finally have
\begin{multline}
\|e_c^N\|^2+(2\mu_l-\epsilon_1) \sum_{n=0}^{N-1} \{ \sum_{i=1}^2(\|e_{ui}^{n,\theta}\|^2+ \|\frac{\partial e_{ui}^{n,\theta}}{\partial x}\|^2 + \|\frac{\partial e_{ui}^{n,\theta}}{\partial y}\|^2)+ \\
(2\sigma-\epsilon_1) \|e_p^{n,\theta}\|^2 +(2 D_{\alpha}-\epsilon_1)(\|e_{c}^{n,\theta}\|^2+
 \|\frac{\partial e_{c}^{n,\theta}}{\partial x}\|^2 + \|\frac{\partial e_{c}^{n,\theta}}{\partial y}\|^2)\} dt\\
 \leq \frac{h^2}{ \epsilon_1} \sum_{n=0}^{N-1} (\|R_1^{h,n,\theta}\|^2+\|R_2^{h,n,\theta}\|^2+\|R_3^{h,n,\theta}\|^2+\|R_4^{h,n,\theta}\|^2)dt + \hspace{17 mm}\\
 2C_{\tau_1} \{(\sum_{k=1}^{n_{el}} \bar{B}_{1k}) \sum_{n=0}^{N-1} \|R_1^{h,n,\theta}\| +  (\sum_{k=1}^{n_{el}} \bar{B}_{2k}) \sum_{n=0}^{N-1} \|R_2^{h,n,\theta}\|\} dt +2 C_{\tau_2} (\sum_{k=1}^{n_{el}} (B_{4k} \\
 +B_{5k})) \sum_{n=0}^{N-1} \|R_3^{h,n,\theta}\| dt + \frac{4 C_{\tau_3}}{T_0-C_{\tau_3}} (\sum_{k=1}^{n_{el}}(B_{6k}+\bar{B}_{4k}) \sum_{n=0}^{N-1} \|R_4^{h,n,\theta}\| dt \\
 \leq \bar{C}(\textbf{R}^h)(h^2+ dt^{2r}) \hspace{75 mm}
\end{multline}
Now taking minimum of the coefficients in the left hand side and dividing both sides by them we will have aposteriori estimate, which does not depend upon exact solution. It shows that the method is second order accurate in space.

\section{Numerical Experiment}
It is well-known that Galerkin finite element method suffers from lack of stability for small diffusion. Consequently stabilized finite element methods enter into studies. In this section few numerical experiments have been carried out to verify theoretically established rate of convergence for algebraic subgrid scale(ASGS) stabilized finite element method. Here we have taken two cases into account. First case presents comparison of two methods Galerkin and ASGS for non-zero diffusion, whereas the second case does the same for zero diffusion. \vspace{1mm}\\
For simplicity we have considered bounded square domain $\Omega$= (0,1) $\times$ (0,1) where the  fluid flow phenomena is governed by unified Stokes-Darcy-Brinkman equation and the dispersion of the solute is modelled by VADR equation. The expression of concentration dependent viscosity is taken from \cite{RefR}, which establishes that viscosity of a solvent depends upon concentration of the solute of a electrolyte solution. Let us mention here the exact solutions as follows: \vspace{1mm}\\
$\textbf{u}=(t sin^2(\pi x) sin(\pi y) cos(\pi y), -t sin(\pi x) cos(\pi x) sin^2(\pi y))$, \vspace{1mm} \\
 $p=t sin(2 \pi x) cos(2 \pi y)$ and $c= t x y (x-1)(y-1)$ \vspace{1mm} \\
 In both cases the viscosity \cite{RefR} is taken to be, $\mu(c)=0.954 e^{27.93 \times 0.028 c}$ \vspace{1mm}\\
Figure 1 shows mesh plot for 40 $\times$ 40 grid points and figure 2,3,4 show horizontal velocity plot, velocity plot and velocity concentration plot respectively for those grid points under ASGS method. It is sufficient to show these plot only for ASGS since same type of plots are generated under Galerkin method.\vspace{1mm}\\
Again figure 8 and 9 present surface plot of exact solution and ASGS solution respectively at 40 $\times$ 40 grid points. These plots are clearly showing that for finer grid the Subgrid solution is more accurate with respect to exact one. \vspace{2mm}\\
\textbf{First case: Non-zero diffusion coefficients} 
The diffusion coefficients and stabilization parameters are considered as follows: \vspace{1mm}\\
$D_1=t^2(sin(\pi x))^4(sin(2 \pi y))^2$, $D_2=t^2 (sin(2 \pi x))^2 (sin(\pi y))^4$ \vspace{1mm}\\
$\tau_1=(4 \frac{\mu_l}{h^2}+ \sigma)^{-1}$, $\tau_2=(4 \sigma h + 0.001 \mu_l)$ and $\tau_3=(\frac{9}{4 h^2}+ \frac{3}{2h} + \alpha )^{-1}$ \vspace{1mm}\\
where $\mu_l= 0.954 e^{27.93 \times 0.028 \times 0.0625}$, $\sigma=1$, $\alpha=0.01$ \vspace{1mm}\\
Table 1 and table 2 present the error in $H^1$ norm and order of convergence under  Galerkin method and SGS method respectively for this case. These tables are clearly showing that both the methods perform equally well for non-zero diffusion coefficients. \vspace{1mm}\\
Figure 5 represents the comparison of exact solution with both Galerkin and ASGS solutions for non-zero diffusion coefficients with respect to increasing time.\vspace{2mm}\\
\textbf{Second case: Zero diffusion coefficients}
In this case both the diffusion coefficients are taken to be zero. Hence the stabilization parameters for ASGS method are turned out to be as follows: \vspace{1mm}\\
$\tau_1=(4 \frac{\mu_l}{h^2}+ \sigma)^{-1}$, $\tau_2=(4 \sigma h + 0.001 \mu_l)$ and $\tau_3=( \frac{3}{2h} + \alpha )^{-1}$ \vspace{1mm} \\
The coefficients are taken the same values as above case. \vspace{1mm}\\
Table 3 and table 4 present the error in $H^1$ norm and order of convergence under Galerkin method and ASGS method respectively. The tables represent that for zero diffusion the order of convergence under Galerkin method oscillates, whereas ASGS performs well. \vspace{1mm}\\
Figure 6 shows the comparison of exact solution with both Galerkin and ASGS solutions for zero diffusion coefficients with respect to increasing time.

\begin{table}[]
    \centering
    \begin{tabular}{||c c c||}
    \hline 
    Mesh size & Error in $H^1$ norm &  Order of convergence \\
    \hline \hline
      10      &   0.953771        &                       \\
      \hline
      20      &  0.275123  &  1.79357 \\
      \hline
      40      &  0.0635331   & 2.1145  \\
      \hline
      80      &  0.0190837  & 1.73516  \\
      \hline
      160     &  0.00531838   & 1.84328  \\
      \hline
    \end{tabular}
\caption{Error and Order of convergence obtained in $H^1$ norm under Galerkin method for non-zero diffusion coefficient}
\end{table}

\begin{table}[]
    \centering
    \begin{tabular}{||c c c||}
    \hline 
    Mesh size & Error in $H^1$ norm &  Order of convergence \\
    \hline \hline
      10      &   0.19916        &                       \\
      \hline
      20      &  0.0670151  &  1.57137 \\
      \hline
      40      &  0.0167876   &  1.99709 \\
      \hline
      80      &  0.00461748  & 1.86221 \\
      \hline
      160     &  0.0099767   & 2.21047  \\
      \hline
    \end{tabular}
\caption{Error and Order of convergence obtained in $H^1$ norm under ASGS method for non-zero diffusion coefficient}
\end{table}

\begin{table}[]
    \centering
    \begin{tabular}{||c c c||}
    \hline 
    Mesh size & Error in $H^1$ norm &  Order of convergence \\
    \hline \hline
      10      &   0.953883        &                       \\
      \hline
      20      &  0.27514   & 1.79364 \\
      \hline
      40      &  0.0635417   & 2.11439  \\
      \hline
      80      &  0.019088   & 1.73504  \\
      \hline
      160     &  0.00732083   & 1.38259  \\
      \hline
    \end{tabular}
\caption{Error and Order of convergence obtained in $H^1$ norm under Galerkin method for zero diffusion coefficient}
\end{table}

\begin{table}[]
    \centering
    \begin{tabular}{||c c c||}
    \hline 
    Mesh size & Error in $H^1$ norm &  Order of convergence \\
    \hline \hline
      10      &   0.200471        &                       \\
      \hline
      20      &  0.067453  &  1.57144 \\
      \hline
      40      &  0.0169767   & 1.99033  \\
      \hline
      80      &  0.0046746   & 1.86064  \\
      \hline
      160     &  0.000980474   & 2.23555  \\
      \hline
    \end{tabular}
\caption{Error and Order of convergence obtained in $H^1$ norm under ASGS method for zero diffusion coefficient}
\end{table}

\begin{figure}
\centering
\begin{minipage}{.4\textwidth}
  \centering
  \includegraphics[width=\textwidth]{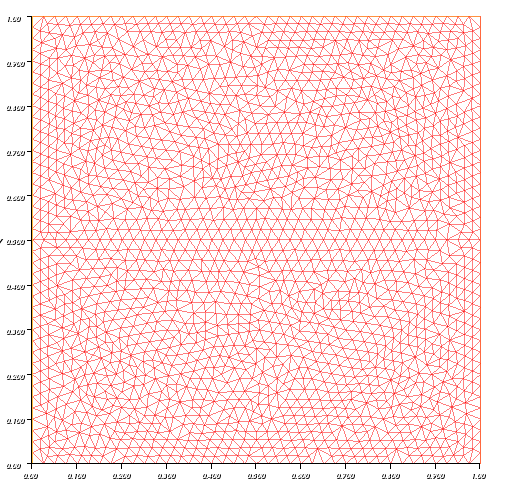} 
  \caption{Mesh for 40 $\times$ 40 grid points}
\end{minipage}
\begin{minipage}{.4\textwidth}
  \centering
  \includegraphics[width=\textwidth]{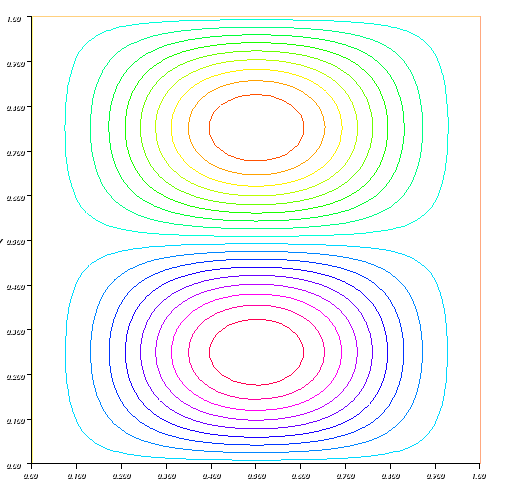}  
  \caption{Horizontal velocity plot}
\end{minipage}
\end{figure}

\begin{figure}
\centering
\begin{minipage}{.4\textwidth}
  \centering
  \includegraphics[width=\textwidth]{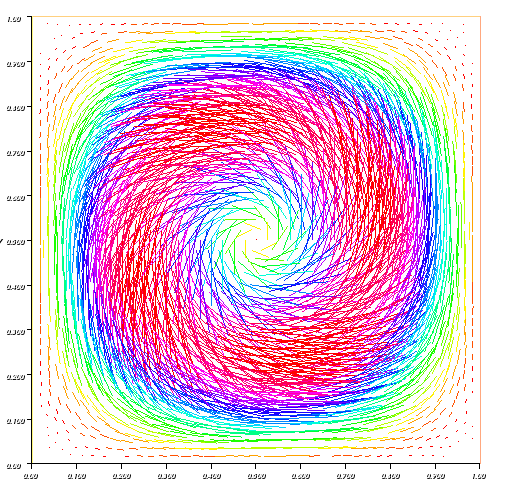} 
  \caption{Velocity plot}
\end{minipage}
\begin{minipage}{.4\textwidth}
  \centering
  \includegraphics[width=\textwidth]{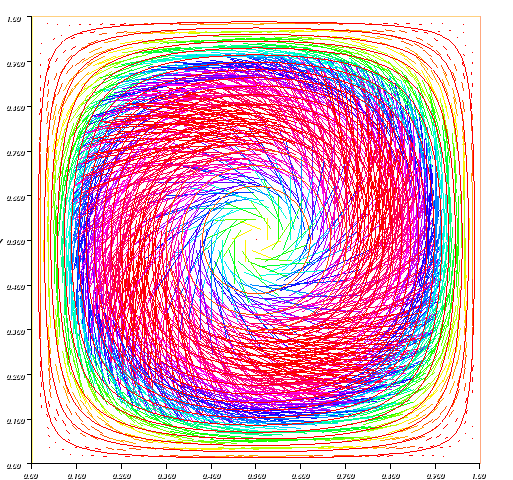}  
  \caption{Velocity concentration plot}
\end{minipage}
\end{figure}

\begin{figure}
\centering
\begin{minipage}{.8\textwidth}
\centering
\includegraphics[width=\textwidth]{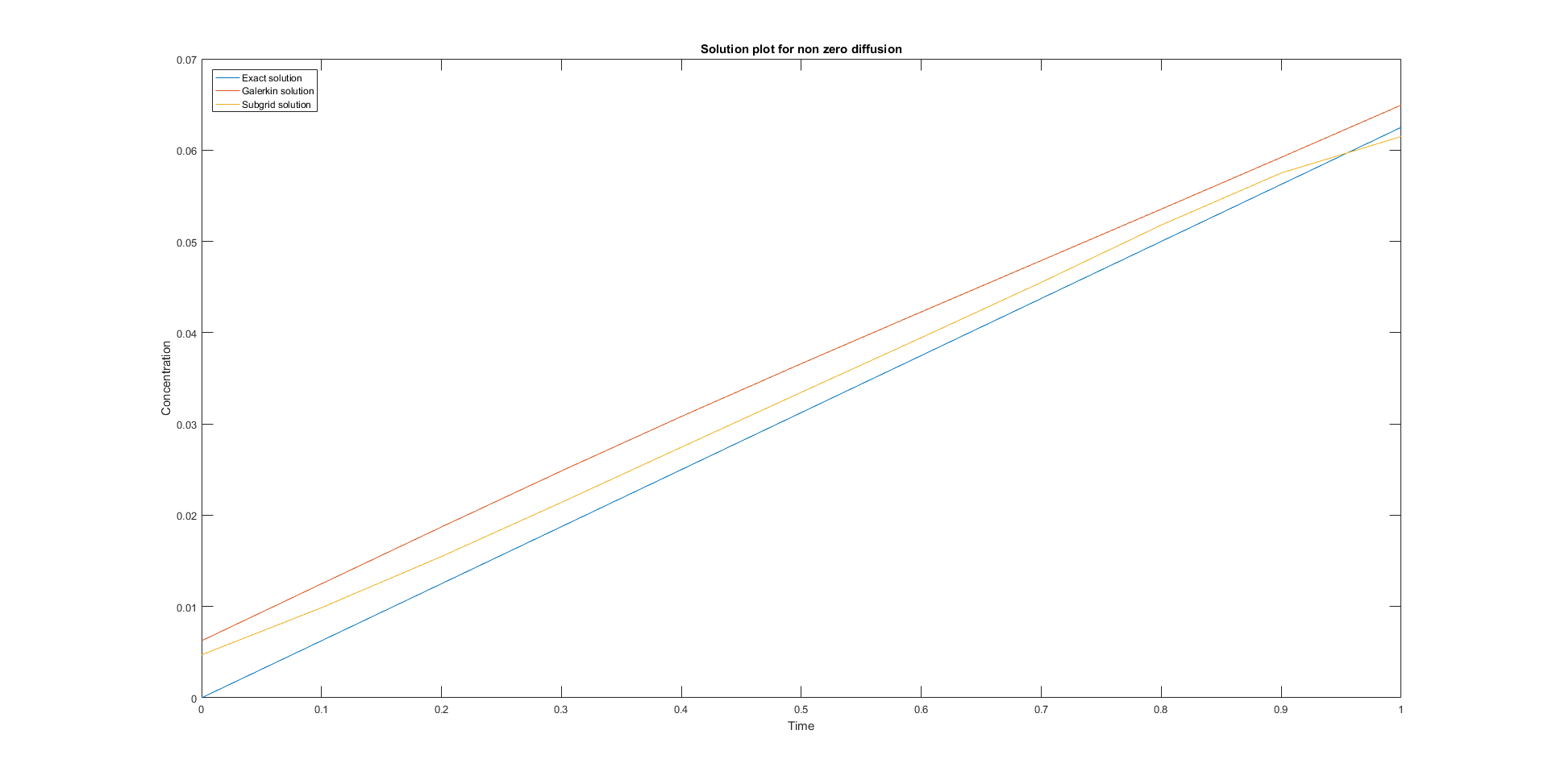}
\caption{Comparison of Exact, Galerkin and ASGS solutions for non-zero diffusion coefficient}
\end{minipage}
\end{figure}

\begin{figure}
\centering
\begin{minipage}{.8\textwidth}
\centering
\includegraphics[width=\textwidth]{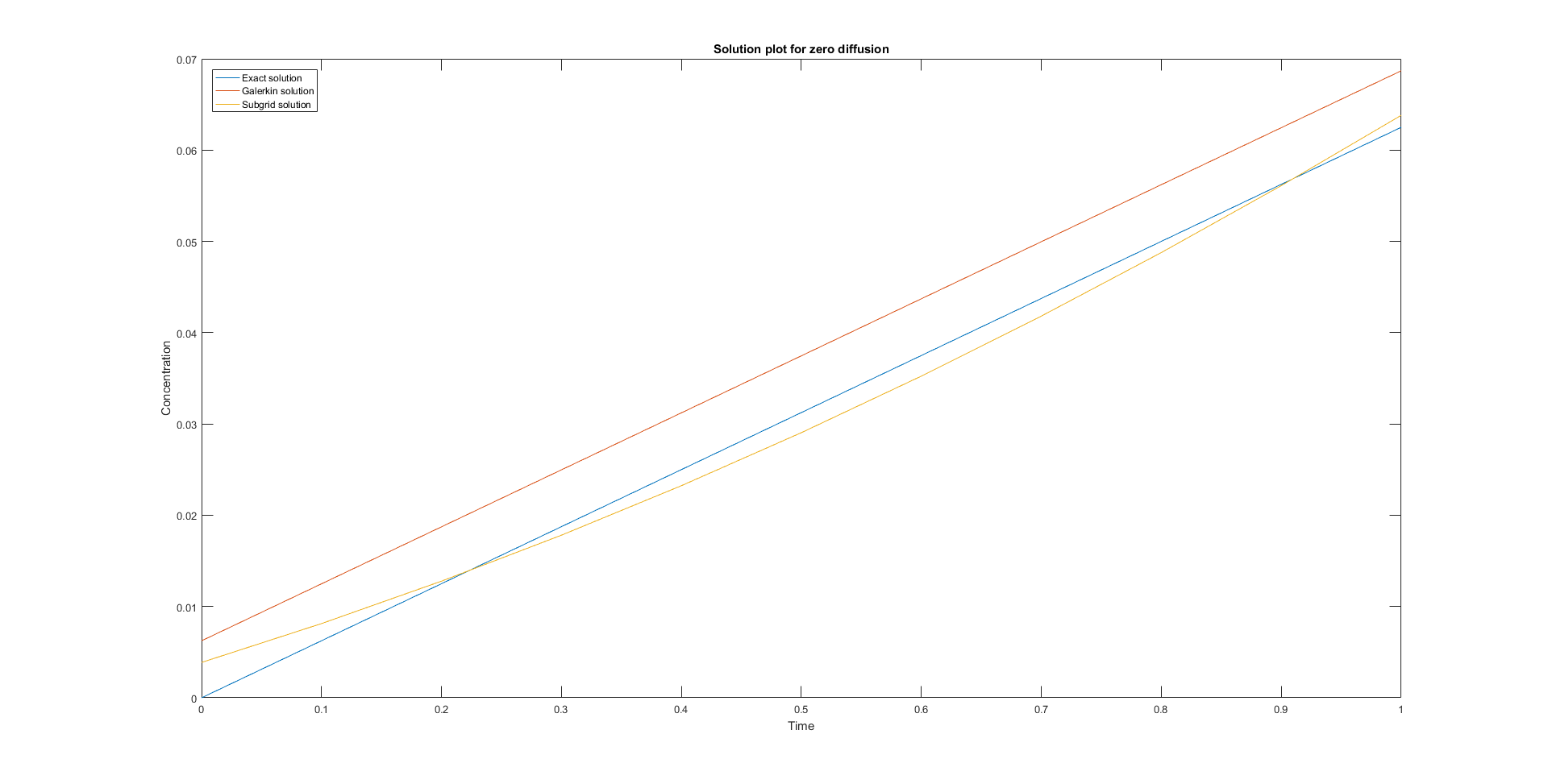}
\caption{Comparison of Exact, Galerkin and ASGS solutions for zero diffusion coefficient}
\end{minipage}
\end{figure}

\begin{figure}
\centering
\begin{minipage}{.8\textwidth}
\centering
\includegraphics[width=\textwidth]{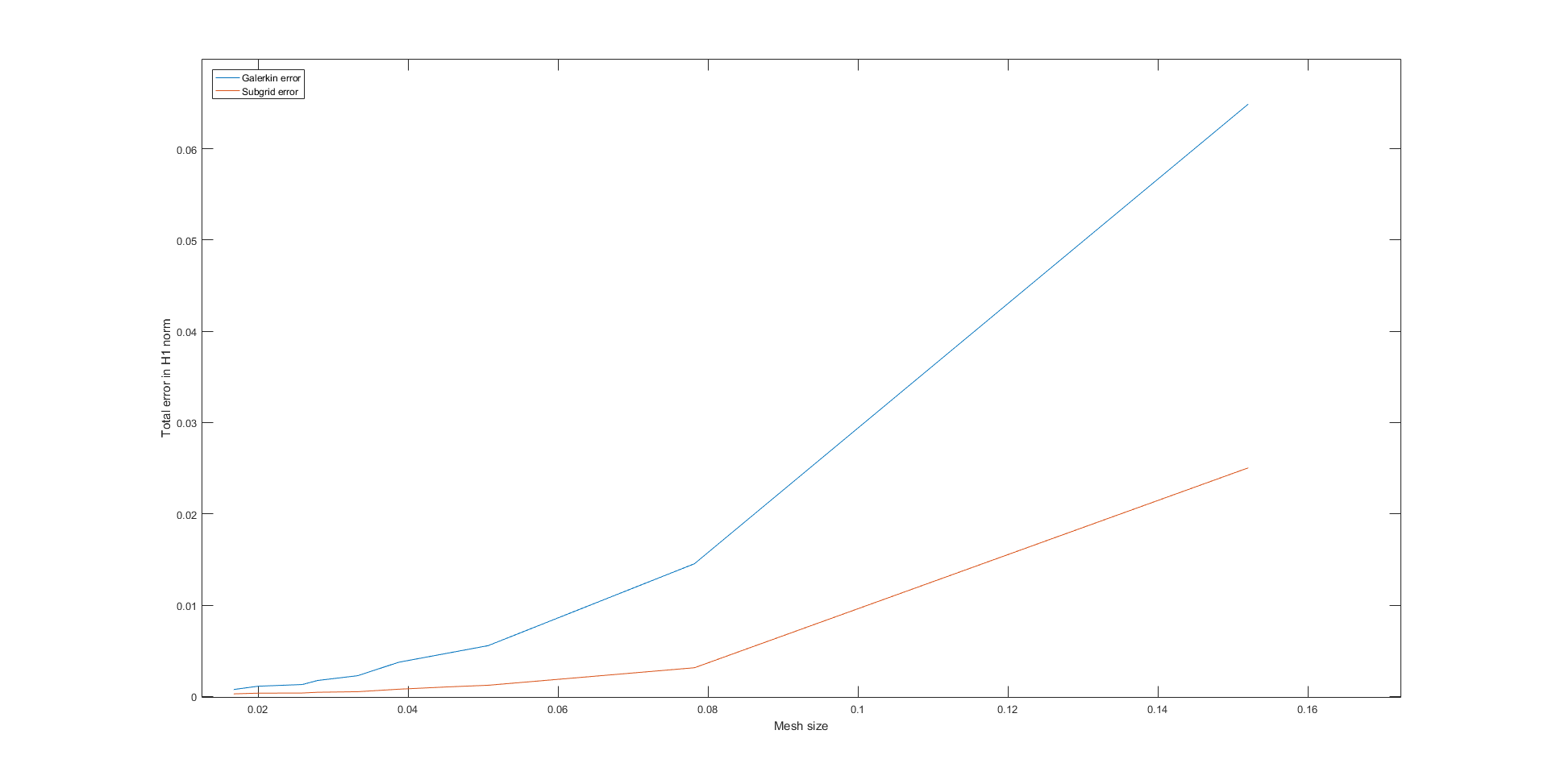}
\caption{Error plot in $H^1$ norm for Galerkin and ASGS method}
\end{minipage}
\end{figure}

\begin{figure}
\centering
\begin{minipage}{.4\textwidth}
  \centering
  \includegraphics[width=\textwidth]{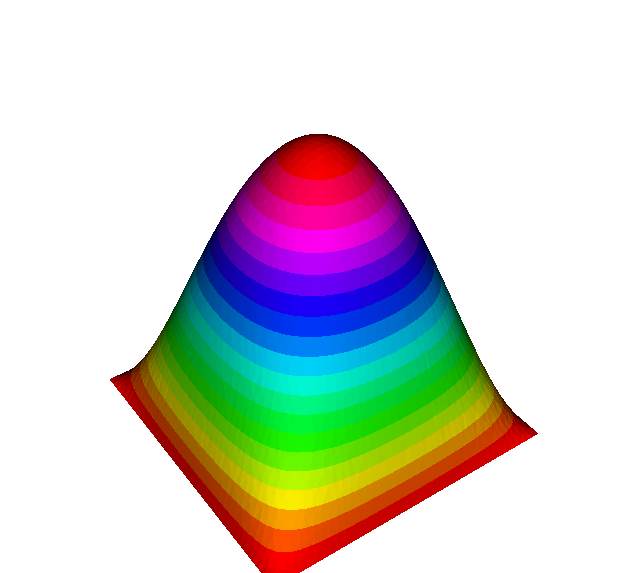} 
  \caption{Exact solution for 40 $\times$ 40 grid points}
\end{minipage}
\begin{minipage}{.4\textwidth}
  \centering
  \includegraphics[width=\textwidth]{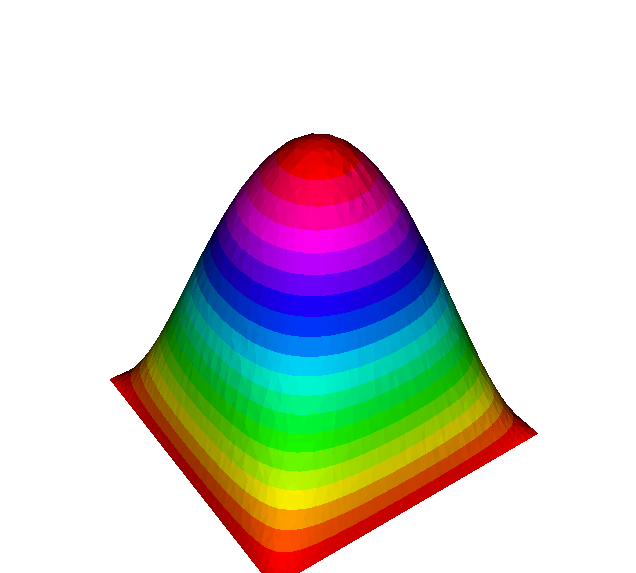}  
  \caption{ASGS solution for 40 $\times$ 40 grid points}
\end{minipage}
\end{figure}

\begin{remark}
The tables are showing that error under ASGS method at each mesh size is turned out to be lesser than that of Galerkin method and for both cases the order of convergence under ASGS method is 2, which justifies theoretically established result.
\end{remark}

\begin{remark}
It is clear from comparison of figure 5 and figure 6 that ASGS solution comparatively more fast converges to exact solution whereas the Galerkin solution is converging to exact solution only for the case of non-zero diffusion coefficients.
\end{remark}

\begin{remark}
Figure 7 represents the error plot in $H^1$ norm under Galerkin and ASGS method. It shows that error under ASGS method is much lesser than that of Galerkin method at the same mesh size and both are decreasing for finer mesh.
\end{remark}

\section{Conclusion}
Apriori and aposteriori error estimations have been carried out for stabilized ASGS finite element method employed on strongly coupled unified Stokes-Darcy-Brinkman/VADR Transport model. Theoretically established results are verified well by numerical experiments. Better performance of ASGS method over Galerkin method is shown in the last section \vspace{1cm}\\
{\large \textbf{Acknowledgement}} \vspace{2mm}\\
This work has been supported by grant from Innovation in Science Pursuit for Inspired Research (INSPIRE) programme sponsored and managed by the Department of Science and Technology(DST), Ministry of Science and Technology, India.

\end{document}